\documentclass[11pt, reqno]{amsart}
\usepackage{xr}
\usepackage{amsthm,amssymb,amstext,amscd,amsfonts,soul}
\usepackage{amsbsy,amsxtra,latexsym,url,multirow}
\usepackage{amsmath,color,comment,accents}
\usepackage{fancybox}
\usepackage{fullpage}
\usepackage[english]{babel}
\usepackage[latin1]{inputenc}

\usepackage{float}\restylefloat{table}
\usepackage{bm}
\usepackage{enumitem} 
\usepackage[draft]{hyperref}	

\newcommand{\field}[1]{\mathbb{#1}}
\newcommand{\N}{\field{N}}
\newcommand{\Z}{\field{Z}}%

\newcommand{\C}{\field{C}}
\newcommand{\Q}{\field{Q}}

\newcommand{\SL}{\operatorname{SL}}

\newcommand{\diag}{diag}

\newcommand{\bea}{\begin{eqnarray}}
\newcommand{\eea}{\end{eqnarray}}
\newcommand{\be}{\begin {equation}}
\newcommand{\ee}{\end{equation}}

\newcommand{\ord}{\operatorname{ord}}

\renewcommand{\H}{\mathbb{H}}

\newcommand{\gen}{\text{gen}}
\newcommand{\pgen}{\text{gen}^+}
\newcommand{\spn}{\text{spn}}
\newcommand{\pspn}{\text{spn}^+}
\newcommand{\cls}{\text{cls}}
\newcommand{\pcls}{\text{cls}^+}
\newcommand{\T}[1]{\Theta_{#1}}
\newcommand{\legendre}[2]{\left( \frac{#1}{#2} \right)}



\numberwithin{equation}{section}
\numberwithin{table}{section}
\newtheorem{theorem}{\textbf{Theorem}}
\numberwithin{theorem}{section}
\newtheorem{lemma}[theorem]{\textbf{Lemma}}
\newtheorem{proposition}[theorem]{\textbf{Proposition}}
\newtheorem{remark}[theorem]{Remark}
\newtheorem{remarks}[theorem]{Remarks}
\newtheorem{conjecture}[theorem]{\textbf{Conjecture}}

\newtheorem{corollary}[theorem]{\textbf{Corollary}}

\theoremstyle{remark}

\renewcommand{\pmod}[1]{\, \left(  \mathrm{mod} \,  #1 \right)}

\begin{document}
\title{Theta series of ternary quadratic lattice cosets}
\author{Ben Kane}
\address{Department of Mathematics, University of Hong Kong, Pokfulam, Hong Kong}
\email{bkane@hku.hk}
\author{Daejun Kim}
\address{Department of Mathematics Education, Korea University, Seoul 02841, Republic of Korea}
\email{daejunkim@korea.ac.kr}
\thanks{ The research of the first author was supported by grants from the Research Grants Council of the Hong Kong SAR, China (project numbers HKU 17301317, 17303618, 17307720, and 17314122). The research of the second author was supported by Basic Science Research Program through the National Research Foundation of Korea(NRF) funded by the Minister of Education (NRF-2020R1A6A3A03037816), and by a KIAS Individual Grant (MG085501) at Korea Institute for Advanced Study. Part of the research was conducted while the second author was an honorary research associate at the University of Hong Kong and part was conducted while the second author was a research fellow at the Korea Institute for Advanced Study.}
\keywords{theta series, ternary lattice cosets, half-integral weight modular forms, Siegel--Weil theorems}
\subjclass[2020]{11F37, 11F60, 11E20, 11H55}
\date{\today}
\begin{abstract}
In this paper, we consider the decomposition of theta series for lattice cosets of ternary lattices. We show that the natural decomposition into an Eisenstein series, a unary theta function, and a cuspidal form which is orthogonal to unary theta functions correspond to the theta series for the genus, the deficiency of the theta series for the spinor genus from that of the genus, and the deficiency of the theta series for the class from that of the spinor genus, respectively. These three pieces are hence invariants of the genus, spinor genus, and class, respectively, extending known results for lattices and verifying a conjecture of the first author and Haensch. We furthermore extend the definition of $p$-neighbors to include lattice cosets and construct an algorithm to compute respresentatives for the classes in the genus or spinor genus via the $p$-neighborhoods.
\end{abstract}
\maketitle

\section{Introduction and statement of results}

In this paper, we are interested in an interplay between the algebraic and analytic theories of quadratic lattice cosets, which are linked by their theta series, with a particular interest in the ternary case. Let $V$ be a positive definite quadratic space over $\Q$ with the associated non-degenerate symmetric bilinear form
$$
B : V\times V \longrightarrow \Q \quad \text{with} \quad Q(x):=B(x,x)
$$
for any $x\in V$. For a $\Z$-lattice $L$ on $V$ of rank $k$ and a non-zero vector $\nu\in V$, we call $L+\nu$ a {\em lattice coset} or {\em shifted lattice}. If $\nu \in L$, then the lattice coset $L+\nu$ is nothing but the lattice $L$.
By suitable scaling of the quadratic map $Q$, if necessary, we may assume that $Q(L+\nu)\subseteq\Z$.
The theta series $\T{L+\nu}$ of $L+\nu$ is defined to be the generating function for the elements of $L+\nu$ of a given norm, that is, the following function defined on the upper-half complex plane $\mathbb{H}$,
$$
\T{L+\nu}(z)=\sum_{x\in L+\nu} q^{Q(x)}=\sum_{n\ge0} r(n,L+\nu) q^n,
$$
where $r(n,L+\nu):=\lvert\{x\in L+\nu : Q(x)=n\}\rvert$ and $q:=e^{2\pi iz}$ ($z\in \H$). It is well known that $\T{L+\nu}$ is a modular form of weight $k/2$ for some congruence subgroup and a character (for an explicit statement, see Proposition \ref{prop-thetaofcoset-is-a-modularform}). Hence $\T{L+\nu}$ naturally splits into the sum of two pieces; namely, 
\[
\T{L+\nu}=E_{L+\nu}+C_{L+\nu},
\]
where $E_{L+\nu}$ is an Eisenstein series and $C_{L+\nu}$ is a cusp form, and this splitting is unique because it is an orthogonal splitting under the Petersson inner product. Generalizing work of Siegel \cite{SiegelQuadratische} and Weil \cite{Weil} (who considered the $\nu=0$ case), Shimura \cite{ShimuraCongruence} showed that $E_{L+\nu}$ is equal to
\begin{equation}\label{eqn:SiegelAverageGenus}
\T{\gen(L+\nu)}:= \frac{1}{\sum_{K+\mu\in \gen(L+\nu)}o(K+\mu)^{-1}} \sum\limits_{K+\mu\in \gen(L+\nu)}\frac{\T{K+\mu}}{o(K+\mu)},
\end{equation}
where $o(K+\mu)$ is the number of automorphs of the lattice coset, and the sums run over a complete set of representatives of the classes in the genus $\gen(L+\nu)$ of $L+\nu$.

On the other hand, for the ternary case (when $k=3$), the cusp form $C_{L+\nu}$ is further decomposed into two pieces,
\[
C_{L+\nu}=U_{L+\nu}+f_{L+\nu},
\]
where $U_{L+\nu}$ is in the space of unary theta functions and $f_{L+\nu}$ is a cusp form orthogonal to unary theta functions with respect to the Petersson inner product. In the case of lattices, Schulze-Pillot \cite{Schulze-PillotTernaryTheta} showed that one may isolate the unary theta functions in this decomposition by taking a weighted average analogous to \eqref{eqn:SiegelAverageGenus}, with the sum instead running over classes of the spinor genus of the associated lattice.

Motivated by Schulze-Pillot's result and examples that resolved questions related to representations of sufficiently large integers by lattice cosets, Haensch and the first author \cite[Conjecture 1.3]{HaenschKane} conjectured that the same decomposition holds for lattice cosets. Namely, setting\footnote{We often distinguish between the genus (resp. spinor genus) and the proper genus (resp. proper spinor genus), adding a $+$ to the notation when investigating the proper classes.}
\[
\T{\pspn(L+\nu)}:= \frac{1}{\sum_{K+\mu\in \pspn(L+\nu)}o^+(K+\mu)^{-1}} \sum\limits_{K+\mu\in \pspn(L+\nu)}\frac{\T{K+\mu}}{o^+(K+\mu)},
\]
where the sum runs over a complete set of representatives of the proper classes in the proper spinor genus $\pspn(L+\nu)$ of $L+\nu$ and $o^+(K+\mu)$ is the number of proper automorphs of the lattice coset (we refer the reader to Section \ref{subsection-prelim-lattice cosets} for the definition of the proper genus $\pgen(L+\nu)$, the proper spinor genus $\pspn(L+\nu)$, and the proper class $\pcls(L+\nu)$  of $L+\nu$), they conjectured the following.
\begin{conjecture}\label{conj:HaenschKane}
For a quadratic lattice $L$ and $\nu\in\Q L$, we have
\[
\T{\pspn(L+\nu)}= E_{L+\nu} +\mathcal{U}_{\pspn(L+\nu)},
\]
where $\mathcal{U}_{\pspn(L+\nu)}$ is a linear combination of unary theta functions.
\end{conjecture}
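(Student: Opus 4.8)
The plan is to pass to the adelic setting and to decompose the ``spinor deficiency'' $\T{\pspn(L+\nu)}-\T{\gen(L+\nu)}$ along the characters of the finite group indexing the proper spinor genera inside the proper genus, reducing the statement to showing that each nontrivial character contributes only unary theta functions. Write $G=\operatorname{SO}(V)$, let $\Sigma\subseteq G(\mathbb{A})$ be the adelic stabilizer attached to $L+\nu$ (the product of $G(\R)$ with the local proper automorph groups of the completions $L_p+\nu$), and recall that the proper classes of $\pgen(L+\nu)$ are parametrized by $G(\Q)\backslash G(\mathbb{A})/\Sigma$. The spinor norm $\theta\colon G(\mathbb{A})\to\mathbb{A}^\times/(\mathbb{A}^\times)^2$ induces a surjection onto the finite elementary abelian $2$-group
\[
\mathcal{G}:=\mathbb{A}^\times\big/\Q^\times(\mathbb{A}^\times)^2\,\theta(\Sigma),
\]
whose fibres are exactly the proper spinor genera. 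By orthogonality of characters on $\mathcal{G}$, the average over the spinor genus $\sigma_0$ of $L+\nu$ can be rewritten as
\[
\T{\pspn(L+\nu)}=\frac{1}{|\mathcal{G}|}\sum_{\chi\in\widehat{\mathcal{G}}}\overline{\chi(\sigma_0)}\,\Theta^{\chi}_{L+\nu},\qquad \Theta^{\chi}_{L+\nu}:=\sum_{K+\mu\in\pgen(L+\nu)}\frac{\chi(K+\mu)}{o^{+}(K+\mu)}\,\T{K+\mu}.
\]
After first checking the (standard) coincidence $\T{\pgen(L+\nu)}=\T{\gen(L+\nu)}$, the trivial-character term reproduces the genus average, which by Shimura's extension of Siegel--Weil in \eqref{eqn:SiegelAverageGenus} equals the Eisenstein series $E_{L+\nu}$. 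Thus it remains to prove that $\mathcal{U}_{\pspn(L+\nu)}:=\frac{1}{|\mathcal{G}|}\sum_{\chi\neq\mathbf{1}}\overline{\chi(\sigma_0)}\,\Theta^{\chi}_{L+\nu}$ is a linear combination of unary theta functions.

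The heart of the argument is to show that each twisted average $\Theta^{\chi}_{L+\nu}$ with $\chi\neq\mathbf{1}$ is itself unary. I would interpret $\Theta^{\chi}_{L+\nu}$ as the theta lift, for the dual pair $(\widetilde{\operatorname{SL}}_2,O(V))$ acting on the Schwartz function associated with the coset $L+\nu$, of the automorphic function $\chi\circ\theta$ on $G(\mathbb{A})$. Since $\chi\circ\theta$ is a one-dimensional character, the lift is especially rigid. Here the ternary hypothesis is decisive: the accidental isomorphism $\operatorname{SO}(V)\cong B^{\times}/Z$, with $B$ the quaternion algebra determined by $V$, identifies $\chi\circ\theta$ with a quadratic Hecke character $\psi$, and a seesaw/doubling argument collapses the lift of this character onto a theta lift from the one-dimensional orthogonal group $O(1)$ twisted by $\psi$. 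Lifts from $O(1)$ are by definition unary theta functions of weight $3/2$, so $\Theta^{\chi}_{L+\nu}$ lies in their span. Concretely, this collapse can be verified at the level of Hecke eigenvalues via the Eichler commutation relation between the $p$-neighbor operators developed in this paper and the half-integral weight Hecke operators: for good $p$ one finds $T_p\Theta^{\chi}_{L+\nu}=\lambda_p(\chi)\,\Theta^{\chi}_{L+\nu}$ with $\lambda_p(\chi)$ exactly the Eisenstein/CM eigenvalue attached to the unary theta series cut out by $\psi$, which by multiplicity one forces $\Theta^{\chi}_{L+\nu}$ into the unary space. Combining with the previous paragraph yields $\T{\pspn(L+\nu)}=E_{L+\nu}+\mathcal{U}_{\pspn(L+\nu)}$, as claimed.

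The principal obstacle, and the genuine novelty beyond the lattice case treated by Schulze-Pillot, is the shift $\nu$. Adelically the coset corresponds not to the characteristic function of a lattice but to a translate of it inside the Weil representation, and I must check that every ingredient above survives this translation: that the double-coset parametrization and the spinor-norm surjection onto $\mathcal{G}$ remain valid for cosets, that the twisted sum $\Theta^{\chi}_{L+\nu}$ still transforms as the theta lift of a character when the local Schwartz functions are shifted, and---most delicately---that the collapse to an $O(1)$-lift persists so that only unary thetas appear. The remaining work is then to pin down precisely which unary theta functions (which shift vectors and residue classes) occur in $\mathcal{U}_{\pspn(L+\nu)}$, together with the half-integral weight bookkeeping and the care needed to work throughout with proper classes so that the spinor norm behaves well; I expect the identification of the $O(1)$-lift in the shifted setting to be the step requiring the most new input.
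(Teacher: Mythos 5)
Your overall skeleton---split $\T{\pspn(L+\nu)}$ over the characters of the finite $2$-group indexing proper spinor genera in the proper genus, identify the trivial-character term with $E_{L+\nu}$, and show each nontrivial twisted average is unary---is a legitimate strategy and is close in spirit to the classical lattice case. But as written the proposal has genuine gaps at exactly the points where the work lies. First, the decisive claim, that each $\Theta^{\chi}_{L+\nu}$ with $\chi\neq\mathbf{1}$ collapses to an $O(1)$-lift and hence is unary \emph{in the shifted setting}, is asserted via a seesaw heuristic and then explicitly deferred (``I expect the identification of the $O(1)$-lift in the shifted setting to be the step requiring the most new input''). That step is the theorem; without it nothing is proved. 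Second, your proposed concrete verification via Hecke eigenvalues is not correct as stated: for a coset of conductor $a>1$, the operator $T(p^2)$ does \emph{not} send $\T{L+\nu}$ into the span of theta series of $\pgen(L+\nu)$. By the Eichler-type commutation relation for cosets (Theorem \ref{thm-Hecke-nbd}), $\T{L+\nu}\mid T(p^2)$ is the sum over $p$-neighbors, which lie in $\pgen(L+\bar p\nu)$; so $\Theta^{\chi}_{L+\nu}$ is not a $T(p^2)$-eigenform, and one instead gets a system of forms permuted by $\nu\mapsto\bar p\nu$ that must be untangled by a further decomposition over Dirichlet characters modulo $a$ (Propositions \ref{prop-thetaofcoset-is-a-modularform} and \ref{prop-modformspace-decomp}). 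Your ``multiplicity one forces $\Theta^{\chi}$ into the unary space'' argument needs this repair before it can even be formulated.

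Two further ingredients you treat as routine are not. Your orthogonality identity $\T{\pspn(L+\nu)}=\tfrac{1}{|\mathcal{G}|}\sum_{\chi}\overline{\chi(\sigma_0)}\Theta^{\chi}_{L+\nu}$ only matches the mass-normalized definition \eqref{defn-thetaofproperspinorgenus} if all proper spinor genera in $\pgen(L+\nu)$ have equal mass; this is true but must be proved (it is the ``furthermore'' part of Lemma \ref{lem-measure-repnum}). And the ``standard coincidence'' $\T{\pgen(L+\nu)}=\T{\gen(L+\nu)}=E_{L+\nu}$ is not standard for cosets, since $\gen$ and $\pgen$ can genuinely differ when $\nu\notin L$; the paper needs Theorem \ref{thm-Hecke-genera} plus Deligne's bounds (Corollary \ref{cor-theta-pgen-gen-equal}) to establish it. For comparison, the paper avoids the theta-correspondence route entirely: it proves the unary nature of $\T{\pspn}-\T{\pgen}$ by Kneser-style representation measures (Lemmas \ref{lem-measure-class}--\ref{lem-measure-repnum} and Theorem \ref{thm-repnumofspinor}), showing the difference of spinor-genus representation numbers is supported on finitely many square classes $t\Z^2$ determined by local spinor norms, and then uses the character decomposition modulo $a$ to place each component in $U_t$. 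Your adelic/seesaw route could in principle work and would be more conceptual, but the proposal as it stands defers precisely the content that distinguishes the coset case from Schulze-Pillot's lattice case.
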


In this paper, we prove that Conjecture \ref{conj:HaenschKane} is true, with $\mathcal{U}_{\pspn(L+\nu)}=U_{L+\nu}$,  and obtain a dictionary between natural objects occuring in the algebraic theory of lattice cosets and the orthogonal projections of $\T{L+\nu}$ into the subspaces of Eisenstein series, unary theta functions, and cusp forms orthogonal to unary theta functions. Let $L+\nu$ be a ternary lattice coset and consider the natural splitting of its theta series
\begin{equation}\label{eqn-splitting-of-thetaofcosets}
	\begin{array}{rcccccc}
		\T{L+\nu}&=&\T{\pgen(L+\nu)}&+& (\T{\pspn(L+\nu)}-\T{\pgen(L+\nu)}) &+& (\T{L+\nu}-\T{\pspn(L+\nu)})\\
		&=&E_{L+\nu} &+& U_{L+\nu} &+& f_{L+\nu}.
	\end{array}	
\end{equation}
Here the theta series $\T{\pgen(L+\nu)}$ and $\T{\pspn(L+\nu)}$ are defined as \eqref{defn-thetaofpropergenus} and \eqref{defn-thetaofproperspinorgenus} (or the above), respectively. Our main result is that the two splittings of $\T{L+\nu}$ in \eqref{eqn-splitting-of-thetaofcosets} indeed coincide termwise.
\begin{theorem}\label{thm:main}
Conjecture \ref{conj:HaenschKane} is true. Moreover, the following hold:
\begin{enumerate}[label={\rm (\arabic*)}]
	\item $E_{L+\nu}(z)=\T{\pgen(L+\nu)}(z)$ is an invariant of $\pgen(L+\nu)$ (Corollary \ref{cor-theta-pgen-gen-equal}),
	\item $U_{L+\nu}(z)=\T{\pspn(L+\nu)}(z)-\T{\pgen(L+\nu)}(z)$ is an invariant of $\pspn(L+\nu)$ (Theorem \ref{thm-diff-thetaofpspn-thetaofpgen}),
	\item $f_{L+\nu}(z)=\T{L+\nu}(z)-\T{\pspn(L+\nu)}(z)$ is an invariant of $\pcls(L+\nu)$ (Theorem \ref{thm-Theta-same-spn}).
\end{enumerate}
\end{theorem}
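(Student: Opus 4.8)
The plan is to prove the three labeled statements (1)--(3) separately and then observe that, together with the uniqueness of the orthogonal decomposition of the relevant weight $3/2$ space into Eisenstein series, unary theta functions, and cusp forms orthogonal to unary thetas, they force the two splittings of $\T{L+\nu}$ in \eqref{eqn-splitting-of-thetaofcosets} to coincide termwise. Throughout I would work in the space of weight $3/2$ modular forms in which $\T{L+\nu}$ lives by Proposition \ref{prop-thetaofcoset-is-a-modularform}, and I would repeatedly use that this space splits orthogonally under the Petersson inner product as $\mathcal{E}\oplus\mathcal{U}\oplus\mathcal{S}^{\perp}$, where $\mathcal{E}$ is spanned by Eisenstein series, $\mathcal{U}$ by unary theta functions, and $\mathcal{S}^{\perp}$ is the orthogonal complement of $\mathcal{U}$ inside the cusp forms.

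For step (1) I would combine Shimura's identity $E_{L+\nu}=\T{\gen(L+\nu)}$ with the equality $\T{\gen(L+\nu)}=\T{\pgen(L+\nu)}$ of the genus and proper genus averages (Corollary \ref{cor-theta-pgen-gen-equal}). For the latter I would argue by a weighting computation: since $\T{K+\mu}$ is invariant under the full orthogonal group, every full class contributes to the proper genus average exactly twice what it contributes to the genus average, in both the numerator and the denominator of the weighted sum in \eqref{eqn:SiegelAverageGenus}. Indeed, either a coset with an improper automorph satisfies $o(K+\mu)=2\,o^+(K+\mu)$ and remains a single proper class, or a coset without one splits off a second proper class of equal theta series and equal number of proper automorphs. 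The normalized averages therefore coincide, giving (1).

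Step (2) is the analytic heart of the theorem and the main obstacle, being the coset analogue of Schulze-Pillot's theorem \cite{Schulze-PillotTernaryTheta}. I would first record that $\T{\pspn(L+\nu)}-\T{\pgen(L+\nu)}$ is a cusp form: each summand of the average $\T{\pspn(L+\nu)}$ is the theta series of a coset in $\gen(L+\nu)$, so its Eisenstein projection is again $\T{\pgen(L+\nu)}$ by Shimura's Siegel--Weil identity, and the Eisenstein parts cancel. It then remains to show this cusp form lies in $\mathcal{U}$ (Theorem \ref{thm-diff-thetaofpspn-thetaofpgen}). Here I would use the spinor genus theory for cosets developed earlier in the paper: the Fourier coefficients of the difference are supported on the spinor exceptional square classes, and on each such class the representation numbers deviate from their genus values by a factor governed by the spinor norm map and the idelic description of the proper spinor genera. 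Matching these coefficients against the Fourier expansions of the unary theta functions whose support is the corresponding square class would identify the difference as an explicit linear combination of unary thetas, yielding (2) and, with $\mathcal{U}_{\pspn(L+\nu)}=U_{L+\nu}$, Conjecture \ref{conj:HaenschKane}.

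For step (3) I would show that the projection of $\T{L+\nu}$ onto $\mathcal{U}$ depends only on $\pspn(L+\nu)$; equivalently, that $\T{L+\nu}-\T{\pspn(L+\nu)}$, which is again a cusp form by the cancellation argument above, is orthogonal to every unary theta function (Theorem \ref{thm-Theta-same-spn}). The natural route is to express the Petersson pairing of $\T{L+\nu}$ with a unary theta in terms of the representation numbers $r(n,L+\nu)$ with $n$ ranging over a fixed square class, and then to invoke that these square-class representation averages are invariants of the proper spinor genus, which is the input tightly linked to (2). Finally I would assemble the pieces: $\T{\pgen(L+\nu)}\in\mathcal{E}$ by (1), $\T{\pspn(L+\nu)}-\T{\pgen(L+\nu)}\in\mathcal{U}$ by (2), and $\T{L+\nu}-\T{\pspn(L+\nu)}\in\mathcal{S}^{\perp}$ by (3). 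Uniqueness of the orthogonal projection then matches these three summands with $E_{L+\nu}$, $U_{L+\nu}$, and $f_{L+\nu}$ respectively, proving the termwise coincidence of the splittings in \eqref{eqn-splitting-of-thetaofcosets} and the asserted genus, spinor genus, and class invariances.
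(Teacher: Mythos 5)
Your overall architecture---prove (1)--(3) separately and let the uniqueness of the orthogonal splitting into Eisenstein series, unary theta functions, and their cuspidal complement do the final bookkeeping---is exactly how the paper assembles the theorem, and your sketch of (2) does follow the paper's route (support of $\T{\pspn(L+\nu)}-\T{\pgen(L+\nu)}$ on finitely many spinor-exceptional square classes via the idelic/measure-theoretic analysis, then identification with the unary theta space). However, your proposed proofs of (1) and (3) each have a genuine gap. For (1), the weighting computation only shows that the average over $\gen(L+\nu)$ is unchanged when classes and automorphs are replaced by proper classes and proper automorphs; it says nothing when $\pgen(L+\nu)$ is a \emph{proper subset} of $\gen(L+\nu)$, which genuinely happens for cosets (see Remark \ref{rem:AfterMain}\,(2) and \cite[Example 4.5]{ChanOh13}): the genus is an orbit under the full adelic group $O_A(V)$ and may decompose into several $O_A^+(V)$-orbits, so the two weighted averages run over different sets of classes and cannot be compared by your argument. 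The paper instead proves directly that $\T{\pgen(aL+\nu)}$ is an Eisenstein series (Corollary \ref{cor-theta-pgen-gen-equal}), using the eigenvalue $p+1$ from Theorem \ref{thm-Hecke-genera}, Deligne's bound on the Shimura lift to kill the component in $U^\perp$, and a quadratic-reciprocity analysis to kill the unary theta components; only then does Shimura's identity $E_{L+\nu}=\T{\gen(L+\nu)}$ force $\T{\pgen(L+\nu)}=\T{\gen(L+\nu)}$, their difference being simultaneously cuspidal and Eisenstein.

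For (3), the fact you propose to ``invoke''---that the square-class representation data of an \emph{individual} class, equivalently its projection to the unary theta space, depends only on $\pspn(L+\nu)$---\emph{is} the content of Theorem \ref{thm-Theta-same-spn}; it does not follow from (2), which compares only averages over entire spinor genera with the genus average, nor from rewriting a Petersson pairing as a sum of coefficients on a square class. The missing ingredients are the $p$-neighborhoods of lattice cosets and the Eichler commutation relation (Theorem \ref{thm-Hecke-nbd}): since $U_t$ is a $T(p^2)$-eigenspace with eigenvalue $p+1$ for suitable $p$ (Proposition \ref{prop-U_t,chi-eigenspace}) and $|R_p(aL+\nu)|=p+1$, one obtains $\sum_{aK+\mu\in R_p(aL+\nu)}\pi_t\bigl(\T{aK+\mu}-\T{aL+\nu}\bigr)=0$, and a positivity/minimality argument then forces $\pi_t(\T{aK+\mu})=\pi_t(\T{aL+\nu})$ for all neighbors; chains of neighbors inside $Z_p(aL+\nu)$ and strong approximation for rotations (Lemma \ref{lem-conseq-of-strongapproximation}) finally reach every proper class in the spinor genus. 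Without this machinery your step (3) is circular.
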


\begin{remarks}\label{rem:AfterMain}
\noindent

\noindent
\begin{enumerate}[leftmargin=*,label={\rm (\arabic*)}]
\item As noted above, we scale our lattice cosets so that they are integral. Hence in Corollary \ref{cor-theta-pgen-gen-equal}, Theorem \ref{thm-diff-thetaofpspn-thetaofpgen}, and Theorem \ref{thm-Theta-same-spn}, $L+\nu$ is replaced with $aL+\nu$ with $\nu\in L$, where this scaling is done so that we may start with an arbitrary integral lattice $L$. In order to translate these theorems into the forms listed above, see the definitions in Section \ref{section-preliminary} for the precise setting.
\item
 Note that if $\nu\notin L$, then $\gen(L+\nu)$ may not coincide with $\pgen(L+\nu)$ (for example, see \cite[Example 4.5]{ChanOh13}). However, our result combined with Shimura's result in \cite{ShimuraCongruence} on the Eisenstein series $E_{L+\nu}$ implies that $\T{\gen(L+\nu)}(z)=\T{\pgen(L+\nu)}$ when $k=3$. We expect that $\T{\gen(L+\nu)}=\T{\pgen(L+\nu)}$ for $k\ge 4$, but we do not investigate this question here and are not aware of a proof in the general case.

\item
 As noted above, if $\nu\in L$, then the splitting \eqref{eqn-splitting-of-thetaofcosets} of $\T{L+\nu}=\T{L}$ was obtained in previous work of Schulze-Pillot \cite{Schulze-PillotTernaryTheta} for ternary lattices. 
In fact, for a ternary lattice $L$, we know that $\gen(L)=\pgen(L)$, $\spn(L)=\pspn(L)$, and $\cls(L)=\pcls(L)$ because $-1_V$ is an automorph of $L$ with determinant $-1$. Hence the theta series of the proper genus and the spinor genus coincide with that of the genus and spinor genus, respectively. Schulze-Pillot determined $U_L$ from algebraic properties of the $p$-neighborhood of ternary lattices.
In Section \ref{subsection-p-nbd-cosets}, we extend the concept of $p$-neighborhoods of ternary lattices to that of ternary lattice cosets, and study some algebraic properties and their interplay with the Hecke operators on $\T{L+\nu}$. We also provide a way to explicitly determine $U_{L+\nu}$ (see Corollary \ref{cor-determining-coefficientsofunarytheta}).

\item There is a natural connection between lattice cosets and quadratic forms with congruence conditions, so Theorem \ref{thm:main} yields a natural splitting for theta functions of quadratic forms with congruence conditions. Along this vein, Duke and Schulze-Pillot \cite{DukeSchulzePillot} proved a similar statement with a modified definition of congruence class (genus, spinor genus) modulo $N(\in\N)$ that agrees with ours in the case of lattices. Their definitions of congruence class and genus coincide with that of van der Blij \cite{vanderBlij}, who proved the Siegel--Weil formula for quadratic forms with congruence conditions.
We note that although the definitions for these algebraic objects are different, their corresponding theta series should coincide because the splitting $\T{L+\nu}=E_{L+\nu}+U_{L+\nu}+f_{L+\nu}$ is unique (see \cite[Lemma 4]{DukeSchulzePillot}). Methods for computing the congruence classes in the congruence genus (or in the congruence spinor genus) have also not been studied, as far as the authors know, but in Section \ref{subsection-graph} we use an object constructed to prove Theorem \ref{thm:main} (3) to design an algorithm that returns a full set of representatives of the proper (spinor) genus in our setting. 

\end{enumerate}
\end{remarks}

In order to prove Theorem \ref{thm:main}, we investigate the action of the Hecke operators on theta series of lattice cosets in Theorem \ref{thm-Hecke-actonthetaofcoset} and Theorem \ref{thm-Hecke-nbd}. Defining the \begin{it}conductor\end{it} of $L+\nu$ to be the minimal $a$ such that $a\nu\in L$, the action of the Hecke operators reveal a connection between $L+\nu$ and other lattice cosets $K+\mu$ with the same conductor. As a side-effect, we establish a definition of $p$-neighborhoods of shifted lattices (see Section \ref{subsection-p-nbd-cosets}); in the case of lattices, these $p$-neighborhoods have played an important role in explicit constructions of the genus and spinor genus (see \cite{Schulze-PillotAlgorithm}), a task which has previously proven difficult for shifted lattices. After establishing these connections, most of the results involving the theta series of the spinor genus can be obtained via measure-theoretic results already in the literature, up to a few tricky technical details that arise from the relations between shifted lattices coming from the same initial lattice. 

The splitting \eqref{eqn-splitting-of-thetaofcosets} of $\T{L+\nu}$ is also useful for determining which sufficiently-large positive integers are represented by $L+\nu$ and it gives an asymptotic formula for $r(n,L+\nu)$. The $n$-th Fourier coefficient of the Eisenstein series $E_{L+\nu}$ is kind of explicit in the sense that one may write it as a product of local representation densities, using the Siegel--Weil formula for lattice cosets proved by Shimura \cite{ShimuraCongruence}. Moreover, as long as $n$ goes to infinity with bounded divisibility by certain (finitely many) ``bad'' primes and $n$ is locally represented (i.e., there are no obstructions coming from congruence conditions), it grows at least like $n^{1/2-\varepsilon}$.
The Fourier coefficients of $U_{L+\nu}$ also grow as fast as that of $E_{L+\nu}$, but these are sparse; namely, the coefficients are supported on finitely many square classes (see Theorem \ref{thm-diff-thetaofpspn-thetaofpgen}). Furthermore, one may explicitly determine $U_{L+\nu}$ by computing only finitely many coefficients of $\T{\pspn(L+\nu)}-\T{\pgen(L+\nu)}$ (see Corollary \ref{cor-determining-coefficientsofunarytheta}). On the other hand, a result of Duke \cite{DukeHyperbolic} implies that the absolute value of the $n$-th Fourier coefficient of $f_{L+\nu}$ grows at most like $n^{3/7+\varepsilon}$, and hence the contribution from this term may generally be considered to be an error term. Therefore, every sufficiently-large positive integer $n$ which has bounded divisibility at the ``bad'' primes, is locally represented by $L+\nu$, and does not belong to any of the finitely many exceptional square classes is represented by $L+\nu$.

For a given shifted lattice $L+\nu$, one can naively obtain the splitting $\T{L+\nu}=E_{L+\nu}+U_{L+\nu}+f_{L+\nu}$ by constructing a basis of the corresponding space of modular forms and applying linear algebra directly. However, the dimension of the space grows somewhat quickly with respect to the discriminant of the lattice and the conductor of the shifted lattice, so this is only practical for relatively small discriminants and conductors. Our result circumvents the need to do high-dimensional linear algebra, yielding an independent algorithm for computations. This algorithm requires only the construction of a system of representatives of the proper classes of $\pgen(L+\nu)$ and $\pspn(L+\nu)$. 
Using our modification of the definition of the $p$-neighborhood of a lattice to that of lattice cosets, there is an algorithmic way, at least in principle, to list out the representatives, generalizing the algorithm in \cite{Schulze-PillotAlgorithm} for finding representatives in the case of lattices (see Section \ref{section-thetainthesamespinorgenus} for further details).

The paper is organized as follows. We first give some preliminary definitions and known results in Section \ref{section-preliminary}.
Especially, the particular space of modular forms in which the theta series $\T{L+\nu}$ lies is described, and the Hecke operators are defined.
In Section \ref{section-algebraicstructureofcosets}, we introduce some algebraic structure of lattice cosets including $p$-neighborhoods of lattice cosets.
In Section \ref{section-Hecke-on-theta}, we discuss how the action of the Hecke operators on the theta series of lattice cosets is related to its $p$-neighborhood, and determine the Eisenstein series $E_{L+\nu}$.
We investigate $U_{L+\nu}$ in Section \ref{section-unarythetafunctionpart}, and we finally determine that $f_{L+\nu}$ is orthogonal to unary theta functions in Section \ref{section-thetainthesamespinorgenus}.

\section{Preliminaries}\label{section-preliminary}

\subsection{Quadratic lattice cosets}\label{subsection-prelim-lattice cosets}
We introduce some definitions of quadratic spaces, lattices, and lattice cosets, and describe our setting for lattice cosets. We refer readers to \cite{OMBook} for more details.

As in the introduction, let  $V$ be a positive definite quadratic space over $\Q$ with the associated non-degenerate symmetric bilinear form
$$
B : V\times V \longrightarrow \Q \quad \text{with} \quad Q(x)=B(x,x)
$$
for any $x\in V$ and the special orthogonal group 
$$
O^+(V)=\{\sigma\in GL(V) : B(\sigma x,\sigma y)=B(x,y) \text{ for any }x,y\in V \text{ and } \det(\sigma)=1 \}.
$$
Let $\theta:O^+(V)\rightarrow \Q^\times/(\Q^\times)^2$ be the {\em spinor norm map} (cf. \cite[$\S 55$]{OMBook}) and denote its kernel by
$$
O'(V)=\{\sigma\in O^+(V) : \theta(\sigma)=1\}.
$$
Let $O_A^+(V)$  and $O_A'(V)$ be the {\em ad{\'e}lizations} of $O^+(V)$ and $O'(V)$, respectively.

A finitely-generated $\Z$-module (hence free $\Z$-module) $L$ in $V$ is called be a {\em lattice} on $V$ if $\Q L=V$.
Let $\Omega=\Omega_\Q$ be the set of all spots (or places) including the infinite spot $\infty$.
We denote the {\em localization} of a lattice $L$ in the localization $V_p$ of $V$ at $p$ by $L_p$ for any prime spot $p$ and $L_\infty = V_\infty$.

Consider a lattice $L$ on $V$. For any non-zero vector $v_0\in V$, we define the {\em shifted lattice} in $V$ to be the set $L+v_0$.
The {\em conductor} of a shifted lattice $L+v_0$ is defined to be the smallest positive integer $a$ such that $av_0\in L$. We can always realize a quadratic Diophantine equation as being induced from a shifted lattice in some quadratic space (see Section 1 of \cite{LSun}).
This is equivalent to study the set $aL+\nu$ (where $\nu=av_0$) in $V$, which is a coset in $L/aL$.
Hence, throughout this article, the term ``lattice coset", or simply ``coset", always refers to the set $aL+\nu$, where $L$ is a lattice on $V$, $a$ is a positive integer, and $\nu\in L$ whose conductor with respect to $aL$ is equal to $a$. This is to emphasize the role of the conductor of lattice cosets in our results.

We always assume that any lattice $L$ is {\em integral}, that is, $B(L,L)\subseteq \Z$ so that we have $Q(aL+\nu)\subseteq \Z$. The {\em discriminant} $d_L$ of $L$ is the determinant of the matrix $A=(B(e_i,e_j))$ for a basis $\{e_i\}$ of $L$, and the {\em level} $N_L$ is defined to be the smallest positive integer $N$ such that $NA^{-1}$ has coefficients in $\Z$.

From \cite[Lemma 4.2]{ChanOh13} or \cite[Lemma 1.2]{LSun}, $O_A^+(V)$, $O^+(V)$, and $O_A'(V)$ all act on $aL+\nu$. Hence we may define
$$
\pgen(aL+\nu):=\text{the orbit of } aL+\nu \text{ under the action of }O_A^+(V)
$$
which is called the {\em proper genus} of $aL+\nu$,
$$
\pspn(aL+\nu):=\text{the orbit of } aL+\nu \text{ under the action of }O^+(V)O_A'(V)
$$
which is called the {\em proper spinor genus} of $aL+\nu$, and
$$
\pcls(aL+\nu):=\text{the orbit of } aL+\nu \text{ under the action of }O^+(V)
$$
which is called the {\em proper class} of $aL+\nu$. Clearly,
$$
\pcls(aL+\nu)\subseteq \pspn(aL+\nu) \subseteq \pgen(aL+\nu).
$$
Set
$$
O^+(aL+\nu)=\{\sigma\in O^+(V) : \sigma(aL+\nu)=aL+\nu\} \quad \text{and} \quad o^+(aL+\nu)=|O^+(aL+\nu)|.
$$
The groups $O^+(aL_p+\nu)$ for any prime $p$ and $O_A^+(aL+\nu)$ may be defined analogously.

The number of (proper) classes in $\pgen(aL+\nu)$ is called the {\em class number} of $aL+\nu$. It is well-known that the class number is equal to the number of double cosets in $O^+(V)\backslash O_A^+(V)/O_A^+(aL+\nu)$ and this is finite (see \cite[Corollary 2.3]{LSun}, see also \cite[Corollary 4.4]{ChanOh13}). The number $g^+(aL+\nu)$ analogously counts the number of (proper) spinor genera contained in the (proper) genus. The next proposition recalls and extends \cite[Proposition 2.5]{Xu}.

\begin{proposition}\label{prop-spinornormmap}
	Let $aL+\nu$ be a coset on a quadratic space $V$ over $\Q$, and let $\theta$ be the spinor norm map defined on $O_A^+(V)$.
	If $\dim(V)\ge 3$, then the number of proper spinor genera in $\pgen(aL+\nu)$ is given by
	$$
	[I_\Q : \Q^\times \prod\limits_{p\in \Omega} \theta(O^+(aL_p+\nu))].
	$$
	Moreover, suppose that $\dim(V)= 3$ and let $x\in V$ be a non-zero vector with $Q(x)=n$ and $V=\Q x\perp W$. Then the spinor norm map induces an isomorphism
	$$
	O_A^+(V)/O^+(V)O_A'(V)O_A^+(W)O_A^+(aL+\nu) \cong I_\Q / \Q^\times N_{E/\Q}(I_E)\prod\limits_{p\in \Omega} \theta(O^+(aL_p+\nu)),
	$$
	where $E=\Q(\sqrt{-nd_L})$, $I_\Q$ and $I_E$ are the id{\`e}le groups, and $N_{E/\Q}$ is the norm map.
\end{proposition}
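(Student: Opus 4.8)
The plan is to push everything through the adelic spinor norm $\theta\colon O_A^+(V)\to I_\Q/I_\Q^2$ (writing $I_\Q^2$ for the subgroup of squares), exploiting that the target is \emph{abelian}. The starting observation is that $\theta$ is a homomorphism whose kernel is exactly $O_A'(V)$, since place-by-place $O'(V_p)=\ker\big(\theta\colon O^+(V_p)\to\Q_p^\times/(\Q_p^\times)^2\big)$ and the adelic norm is taken componentwise. Hence $O^+(V)O_A'(V)=\theta^{-1}\big(\theta(O^+(V))\big)$ is a full preimage, in particular \emph{normal} in $O_A^+(V)$, and $\theta$ descends to $O_A^+(V)/O^+(V)O_A'(V)\cong(I_\Q/I_\Q^2)/\theta(O^+(V))$. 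For $\dim(V)\ge 3$ the adelic spinor norm is surjective, so $\theta(O_A^+(V))=I_\Q/I_\Q^2$ (for $\dim(V)=3$ this is visible locally: at isotropic $V_p$ surjectivity is classical, while at an anisotropic ternary $V_p$ the group $O^+(V_p)$ is conjugation by the local quaternion division algebra and the spinor norm is the reduced norm, which is onto), and $\theta(O^+(V))=\Q^\times I_\Q^2/I_\Q^2$ because a global isometry has spinor norm a principal id\`ele. Since every local factor $\theta(O^+(aL_p+\nu))$ contains $(\Q_p^\times)^2$, an index of subgroups of $I_\Q/I_\Q^2$ equals the corresponding index inside $I_\Q$, which lets us write all answers in $I_\Q$.

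For the first assertion, recall that $\pgen(aL+\nu)$ is a single $O_A^+(V)$-orbit and its proper spinor genera are the orbits of the normal subgroup $O^+(V)O_A'(V)$; thus the number of proper spinor genera is $\big|O^+(V)O_A'(V)\backslash O_A^+(V)/O_A^+(aL+\nu)\big|$. Normality collapses this double-coset space to the quotient group $O_A^+(V)/O^+(V)O_A'(V)O_A^+(aL+\nu)$, so the count is the index $[O_A^+(V):O^+(V)O_A'(V)O_A^+(aL+\nu)]$. Applying $\theta$, using $\theta(O_A^+(aL+\nu))=\prod_p\theta(O^+(aL_p+\nu))$ together with the identifications above, yields precisely $[I_\Q:\Q^\times\prod_{p\in\Omega}\theta(O^+(aL_p+\nu))]$.

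For the second assertion, embed $O_A^+(W)\hookrightarrow O_A^+(V)$ by extending an isometry of $W$ by the identity on $\Q x$. The displayed quotient makes sense because, modulo the normal subgroup $O^+(V)O_A'(V)$, the remaining factors are subgroups of the abelian group $I_\Q/I_\Q^2$, so their product is a subgroup and the set in question is its preimage. Applying $\theta$ (whose kernel lies in the denominator) gives
$$
O_A^+(V)/O^+(V)O_A'(V)O_A^+(W)O_A^+(aL+\nu)\;\cong\;(I_\Q/I_\Q^2)\big/\big\langle\theta(O^+(V)),\theta(O_A^+(W)),\theta(O_A^+(aL+\nu))\big\rangle.
$$
Here $\theta(O^+(V))=\Q^\times$ and $\theta(O_A^+(aL+\nu))=\prod_p\theta(O^+(aL_p+\nu))$ as before, and the crux is the binary computation $\theta(O_A^+(W))=N_{E/\Q}(I_E)$ modulo squares. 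This rests on the classical identification of the binary space $W$ with the quadratic algebra $E=\Q(\sqrt{-\det W})$, under which $O^+(W_p)\cong E_p^1$ and the local spinor norm of a rotation is the norm class $N_{E_p/\Q_p}(E_p^\times)$; the restricted product over $p$ then gives $N_{E/\Q}(I_E)$. Finally the discriminant bookkeeping for $V=\Q x\perp W$ with $Q(x)=n$, namely $\det W\equiv n\,d_L\pmod{(\Q^\times)^2}$, identifies $E=\Q(\sqrt{-nd_L})$, and reassembling the three images produces the right-hand side $I_\Q/\Q^\times N_{E/\Q}(I_E)\prod_{p\in\Omega}\theta(O^+(aL_p+\nu))$.

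The main obstacle is the binary step: establishing $\theta(O_A^+(W))=N_{E/\Q}(I_E)$ demands the local spinor-norm formula for plane rotations and the compatible isomorphism $O^+(W)\cong E^1$ at every place, together with careful tracking of the factor-of-two and squaring normalizations implicit in $Q(x)=B(x,x)$, so that $E$ emerges as $\Q(\sqrt{-nd_L})$ and not a quadratic twist. A secondary technical point is justifying that the numerator may be taken to be all of $I_\Q$ rather than merely $\theta(O_A^+(V))$, which for $\dim(V)=3$ follows from the surjectivity of the local ternary spinor norm noted above, and verifying that passing between indices in $I_\Q/I_\Q^2$ and in $I_\Q$ is harmless since each relevant local factor already contains $(\Q_p^\times)^2$.
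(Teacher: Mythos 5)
Your overall strategy---counting proper spinor genera via the double cosets $O^+(V)O_A'(V)\backslash O_A^+(V)/O_A^+(aL+\nu)$, collapsing these to a quotient group because $O_A'(V)=\ker\theta$ contains the commutator subgroup, and then transporting everything through $\theta$, with the binary computation $\theta(O_A^+(W))=N_{E/\Q}(I_E)$ supplying the factor for the second isomorphism---is essentially the paper's route (which invokes \cite[102:7]{OMBook} for the first claim and argues injectivity by hand for the second). However, two of your intermediate identifications are false in the setting of this paper, where $V$ is \emph{positive definite}. First, $\theta(O_A^+(V))\neq I_\Q/I_\Q^2$: at the archimedean place $O^+(V_\infty)$ is compact, every rotation is a product of reflections in vectors of positive norm, and the spinor norm image is only $\R_{>0}=(\R^\times)^2$; equivalently, the reduced norm of the \emph{real} quaternion division algebra is $\R_{>0}$, not all of $\R^\times$, so your parenthetical justification (``the reduced norm, which is onto'') fails precisely at $\infty$. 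The image $\theta(O_A^+(V))$ is only the subgroup of classes positive at the real place. Second, and for the same reason, $\theta(O^+(V))=\Q_{>0}(\Q^\times)^2$ rather than $\Q^\times(\Q^\times)^2$ (this is \cite[101:8]{OMBook} for a space anisotropic at the real spot); your justification that a global isometry has spinor norm a principal id\`ele gives only one inclusion, and the asserted equality is wrong.

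These two factor-of-two errors happen to cancel: since the principal id\`ele $-1$ is negative at $\infty$ while every element of $\Q_{>0}\prod_{p}\theta(O^+(aL_p+\nu))$ is positive there, one checks that $[I_\Q:\Q^\times\prod_{p}\theta(O^+(aL_p+\nu))]$ coincides with the correct index $[\theta(O_A^+(V)):\theta(O^+(V))\prod_{p}\theta(O^+(aL_p+\nu))]$, and similarly for the second quotient. So your final formulas are right, but the proof as written is not, and the cancellation is neither noticed nor justified. The point where the archimedean positivity must genuinely be confronted is the injectivity of the second isomorphism: the paper, given $\theta(s)=b\cdot j\cdot i$ with $b\in\Q^\times$, uses that $\theta(s_\infty)$, $j_\infty$, and $i_\infty$ are all positive to force $b>0$, and only then applies \cite[101:8]{OMBook} to realize $b=\theta(\sigma)$ for some $\sigma\in O^+(V)$. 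Your argument, taken literally, would need to realize an arbitrary, possibly negative, $b\in\Q^\times$ as the spinor norm of a global rotation of a positive definite space, which is impossible. Repairing the proof requires replacing your two claimed equalities by the correct archimedean-restricted versions and then verifying the index bookkeeping above.
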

\begin{proof}
	The first assertion was made in \cite[Proposition 2.5]{Xu}, but we provide a brief proof for completeness. Note that for a $u,v\in O_A^+(V)$, the coset $v(aL+\nu)$ belongs to $\pspn(u(aL+\nu))$ if and only if $v\in O^+(V)O_A'(V)uO_A^+(aL+\nu)$.
	This group is equal to $O^+(V)O_A'(V)O_A^+(aL+\nu)u$ since $O_A'(V)$ contains the commutator subgroup of $O_A^+(V)$.
	Hence, the number of proper spinor genera is given by 
	$$
	[O_A^+(V):O^+(V)O_A'(V)O_A^+(aL+\nu)].
	$$
	On the other hand, by \cite[102:7]{OMBook}, the spinor norm map $\theta$ induces the isomorphism
	\begin{equation}\label{bijection-numberofspinorgenera}
	O_A^+(V)/O^+(V)O_A'(V)O_A^+(aL+\nu) \xrightarrow{\cong} I_\Q/\Q^\times \prod\limits_{p\in \Omega} \theta(O^+(aL_p+\nu)).
	\end{equation}
	Furthermore, we show that the map $\theta$ induces the following isomorphism
	\begin{equation}\label{bijection-splittingintotwohalves}
	\theta : O_A^+(V)/O^+(V)O_A'(V)O_A^+(W)O_A^+(aL+\nu) \xrightarrow{\cong} I_\Q/\Q^\times N_{E/\Q}(I_E) \prod\limits_{p\in \Omega} \theta(O^+(aL_p+\nu)).
	\end{equation}
	We first note that $-nd_L$ is not a square in $\Q$ since it is a negative number so that $\theta(O^+(W_p))=N_{E_\mathfrak{p}/\Q_p}(E_\mathfrak{p}^\times)$ for any $\mathfrak{p}\mid p$.
	Hence, the map in \eqref{bijection-splittingintotwohalves} is well-defined. The surjectivity of \eqref{bijection-splittingintotwohalves} follows from that of \eqref{bijection-numberofspinorgenera}.
	Finally, assume that a $s=(s_p)\in O_A^+(V)$ satisfies $\theta(s)=b\cdot j \cdot i$ for some $b\in \Q^\times$, $j=(j_p)\in N_{E/\Q}(I_E)$, and $i=(i_p)\in \prod_{p\in \Omega} \theta(O^+(aL_p+\nu))$. Since all the $\theta(s_\infty)$, $j_\infty$, and $i_\infty$ are positive numbers, we should have $b>0$. 
	Thus, $b=\theta(\sigma)$ for some $\sigma\in O^+(V)$ by \cite[101:8]{OMBook}.
	On the other hand, $i_p=\theta(\Sigma_p)$ and $j_p=\theta(h_p)$ for some $\Sigma_p\in O^+(aL_p+\nu)$ and $h_p\in O^+(W_p)$ for any $p\in \Omega$.
	Since $\theta(s_p)=\theta(\sigma)\theta(h_p)\theta(\Sigma_p)$ for any $p\in \Omega$, we may conclude that 
	$$
	s\in \sigma\cdot  h\cdot \Sigma \cdot O_A'(V) \subseteq O^+(V)O_A'(V)O_A^+(W)O_A^+(aL+\nu),
	$$
	where $h=(h_p)\in O_A^+(W)$ and $\Sigma=(\Sigma_p)\in O_A^+(aL+\nu)$. Thus the map in \eqref{bijection-splittingintotwohalves} is injective. This completes the proof of the proposition.
\end{proof}

\subsection{Modular forms}
We briefly introduce modular forms of half-integral weight below. We refer readers to \cite{OnoBook} for an introduction to modular forms of integral weight or for more details.

For a positive integer $N$, we require natural congruence subgroups of $\SL_2(\Z)$ defined by 
$$
\begin{aligned}
	&\Gamma_0(N)=\left\{\left(\begin{smallmatrix} a&b\\c&d \end{smallmatrix}\right) \in \SL_2(\Z) : c \equiv 0 \pmod{N} \right\},\\
	&\Gamma_1(N)=\left\{\left(\begin{smallmatrix} a&b\\c&d \end{smallmatrix}\right) \in \Gamma_0(N) : a,d \equiv 1 \pmod{N} \right\}.
\end{aligned}
$$
For a $\gamma=\left(\begin{smallmatrix} a&b\\c&d \end{smallmatrix}\right)\in \Gamma_0(4)$ and $\kappa\in \frac{1}{2}+\Z$, define the {\em slash operator} on a function $f:\H \rightarrow \C$ by
$$
f\mid_{\kappa}\gamma (z)= \legendre{c}{d}\varepsilon_d^{2\kappa}(cz+d)^{-\kappa}f(\gamma z),
$$
where $\varepsilon_d=1$ if $d\equiv 1 \pmod{4}$, $\varepsilon_d=i$ if $ d\equiv 3 \pmod{4}$, and $\legendre{\cdot}{\cdot}$ is the Kronecker--Jacobi--Legendre symbol. We call $f$ a {\em (holomorphic) modular form} of weight $\kappa$ on $\Gamma\subseteq \Gamma_0(4)$ ($\Gamma$ a congruence subgroup containing $\left(\begin{smallmatrix} 1&1\\0&1 \end{smallmatrix}\right)$) with character $\chi$ if
\begin{enumerate}[label={\rm (\arabic*)}]
	\item $f|_\kappa\gamma = \chi(d)f$ for any  $\gamma=\left(\begin{smallmatrix} a&b\\c&d \end{smallmatrix}\right)\in \Gamma$,
	\item $f$ is holomorphic on $\H$,
	\item $f(z)$ grows at most polynomially in $y$ as $z=x+iy\rightarrow \Q\cup \{i\infty\}$.
\end{enumerate}
We moreover call $f$ a {\em cusp form} if $f(z)\rightarrow 0$ as $z\rightarrow \Q\cup \{i\infty\}$.
The space of modular forms (resp. cusp forms) of weight $\kappa$, character $\chi$ and congruence subgroup $\Gamma$, will be denoted by $M_{\kappa}(\Gamma,\chi)$ (resp. $S_{\kappa}(\Gamma,\chi)$).
The space of {\em Eisenstein series}, denoted by $E_{\kappa}(\Gamma,\chi)$, is the orthogonal complement of $S_{\kappa}(\Gamma,\chi)$ in $M_{\kappa}(\Gamma,\chi)$ with respect to the Petersson inner product (for an introduction and properties of the inner product, see \cite[Chapter III]{Lang}). If $f$ is a modular form for a congruence group $\Gamma$ containing $\left(\begin{smallmatrix} 1&1\\0&1\end{smallmatrix}\right)$, then $f$ has a Fourier series expansion 
$$
f(z)=\sum\limits_{n=0}^\infty a(n)q^n,
$$
where $q=e^{2\pi iz}$. In particular, if $f$ is a cusp form, then $a(0)=0$.

For $\kappa\ge 3/2$, let $f(z)=\sum\limits_{n=1}^\infty a(n)q^n\in S_{\kappa}(\Gamma_0(N),\chi)$ ($4\mid N$). For a square-free positive integer $t$, define the {\em $t$-th Shimura lift} by
$$
F_t(z)=\sum\limits_{n=1}^\infty A_t(n)q^n,
$$
where $A_t(n)$ is defined by 
$$
\sum\limits_{n=1}^\infty A_t(n)n^{-s}=\left(\sum\limits_{m=1}^\infty \chi(m)\legendre{-1}{m}^{\kappa-\frac{1}{2}}\legendre{t}{m} m^{\kappa-\frac{3}{2}-s}\right)\left(\sum\limits_{m=1}^\infty a(tm^2)m^{-s}\right).
$$
Shimura \cite{Shimura} proved that $F_t(z)\in M_{2\kappa-1}(\Gamma_0(N_t),\chi^2)$ for a suitable $N_t$. Later, Niwa \cite{Niwa} showed that $N_t$ can be taken as $N/2$ independently of $t$.
For $\kappa\ge 5/2$, $F_t$ is a cusp form, but the situation is more complicated when $\kappa=3/2$, requiring a more careful analysis of the space  
$$
U_t(N,\chi):=S_{3/2}(\Gamma_0(N),\chi) \cap \left\{f(z)=\sum\limits_{n=1}^\infty a(n)nq^{tn^2}\right\}
$$
spanned by unary theta functions. Specifically from the results in \cite{Cipra}, \cite{Kojima}, \cite{Sturm}, the $t$-th Shimura lift $F_t$ of $f$ is a cusp form if and only if $f$ belongs to the orthogonal complement $U_t^\perp$ of $U_t$ in $S_{3/2}(\Gamma_0(N),\chi)$ with respect to the Petersson inner product. For a Dirichlet character $\psi$ modulo $m_\psi$, consider
\[
 h(z,\psi)=\sum_{n=1}^\infty \psi(n)nq^{n^2}.
\]
Note that the space $U_t(N,\chi)$ is spanned by 
\begin{equation}\label{UtNchi-is-spanned-by}
	\left\{h(tu^2z,\psi) : u\in\Z, \ 4tm_\psi^2u^2\mid N, \ \psi = \chi \legendre{-4t}{\cdot}\right\}.
\end{equation}
This follows from the fact that the spaces $h(tu^2z,\psi)$ for different $t$ or $\psi$ are orthogonal to each other with respect to Petersson inner product and the modularity given in \cite[Proposition 2.2]{Shimura}.

\subsection{Elementary theta functions} Let $k$ be a positive integer, $A$ a positive definite $k\times k$ symmetric matrix, $h$ an element in $\Z^k$, and $N$ a positive integer satisfying the following conditions:
\begin{equation}\label{condition-thetafunctionofShimura}
	\text{Both $A$ and $NA^{-1}$ have coefficients in $\Z$; } Ah\in N\Z^k.
\end{equation}
In \cite{Shimura}, Shimura defined the theta function 
\begin{equation}\label{defn-thetafunctionofShimura}
	\vartheta(z;h,A,N,P)=\sum\limits_{x\in\Z^k,\, x\equiv h\pmod{N}} P(x) \cdot q^{ x^tAx/2N^2},
\end{equation}
where $P(x)$ is a spherical function of order $\nu\in\Z_{\ge 0}$ with respect to $A$. In this article, we only concern the case when $P(x)=1$ where $\nu=0$, or $P(x)=x$ with $k=1$ where $\nu=1$. 
Indeed, the function $h(z,\psi)$ defined in \eqref{UtNchi-is-spanned-by} is given by a linear combination of the theta functions corresponding to the latter case (see \cite[Proposition 2.2]{Shimura}).
Moreover, Shimura \cite{Shimura} proved the following transformation formula of the theta functions.
\begin{proposition}[Proposition 2.1 of \cite{Shimura}]\label{prop-Shimura-transformation-theta}
	Let $\vartheta(z;h,A,N,P)$ be defined by \eqref{defn-thetafunctionofShimura} under the assumption \eqref{condition-thetafunctionofShimura}, and let $\gamma=\left(\begin{smallmatrix}
		a&b\\c&d
	\end{smallmatrix}\right)\in \SL_2(\Z)$ with $b\equiv 0\pmod{2}$ and $c\equiv 0 \pmod{2N}$. Then
$$
\vartheta(\gamma(z);h,A,N,P)=e(ab\cdot h^tAh/2N^2)\legendre{\det(A)}{d}\legendre{2c}{d}^k\varepsilon_d^{-k}(cz+d)^{(k+2\nu)/2}\vartheta(z;ah,A,N,P),
$$
where $e(z):=e^{2\pi i z}$.
\end{proposition}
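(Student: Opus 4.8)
The plan is to prove the transformation formula directly by Poisson summation, handling the spherical weight $P$ through the Fourier-transform eigenfunction property of harmonic polynomials, and to isolate the arithmetic of the half-integral weight multiplier into a single Gauss-sum computation.

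First I would dispose of the translation case as a warm-up and consistency check. When $c=0$ we have $\gamma=\pm\left(\begin{smallmatrix}1&b\\0&1\end{smallmatrix}\right)$ with $b$ even, all the Kronecker symbols and $\varepsilon_d$ are trivial, $(cz+d)=1$, and $ah=h$, so the claim reduces to $\vartheta(z+b;h,A,N,P)=e(b\,h^tAh/2N^2)\vartheta(z;h,A,N,P)$. Writing $x=h+Ny$ and expanding $x^tAx=h^tAh+2N(Ah)^ty+N^2y^tAy$, the hypotheses $Ah\in N\Z^k$ and $b\equiv0\pmod 2$, together with the integrality of $A$, force the cross term and the $y^tAy$ term to contribute trivial phases, leaving exactly the stated factor. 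This already shows where the congruence conditions come from and gives the correct normalization to match against in the general case.

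For the main case $c\neq0$ I would use the identity $\gamma z=\tfrac{a}{c}-\tfrac{1}{c(cz+d)}$, coming from $ad-bc=1$, to split the exponent $\tfrac{\gamma z\cdot x^tAx}{2N^2}$ into an arithmetic part $\tfrac{a}{c}\cdot\tfrac{x^tAx}{2N^2}$ and an analytic part $-\tfrac{1}{c(cz+d)}\cdot\tfrac{x^tAx}{2N^2}$. Grouping the vectors $x\equiv h\pmod N$ into residue classes modulo $cN$, the arithmetic part becomes a finite quadratic Gauss sum over each class while the analytic part is a Gaussian to which Poisson summation applies. The key analytic input is Bochner's relation: the Fourier transform of $P(x)e^{\pi i w\,x^tAx}$ equals $(\det A)^{-1/2}(w/i)^{-(k+2\nu)/2}$ times $P^*(\xi)e^{-\pi i\,\xi^tA^{-1}\xi/w}$, with $P^*$ the corresponding spherical function for $A^{-1}$, so a degree-$\nu$ spherical weight reproduces itself and contributes the exponent $(k+2\nu)/2$ in the automorphy factor $(cz+d)^{(k+2\nu)/2}$. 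Completing the square and reindexing over the dual lattice then produces both the shifted characteristic $ah$ and, using $ad-bc=1$, the prefactor phase $e(ab\,h^tAh/2N^2)$.

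The main obstacle is the evaluation of the resulting quadratic Gauss sums and their assembly into the precise multiplier $\legendre{\det(A)}{d}\legendre{2c}{d}^k\varepsilon_d^{-k}$; this is exactly where the half-integral weight lives. One must evaluate a $k$-dimensional Gauss sum attached to $A$ modulo $d$ and track the Gaussian normalization $(w/i)^{-k/2}$ through a fixed branch of the square root, which requires quadratic reciprocity in Landsberg--Schaar form together with careful bookkeeping of signs and powers of $i$. I would organize this by diagonalizing $A$ locally over each $\Z_p$, reducing to one-dimensional Gauss sums, evaluating those, and then reassembling the global symbol; keeping the branch cuts of $(cz+d)^{(k+2\nu)/2}$ and of $\varepsilon_d$ consistent across this reduction is the delicate point. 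The conditions $b\equiv0\pmod2$, $c\equiv0\pmod{2N}$, and $Ah\in N\Z^k$ are precisely what make every intermediate phase well defined and force the Gauss sums to factor cleanly.
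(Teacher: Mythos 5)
The paper does not prove this statement at all: it is imported verbatim as Proposition 2.1 of Shimura's 1973 paper, so the only argument to compare yours against is Shimura's original one. Your outline is essentially that argument: Poisson summation applied to the splitting $\gamma z=\frac{a}{c}-\frac{1}{c(cz+d)}$, the Bochner--Hecke self-reciprocity of degree-$\nu$ spherical functions (which is exactly what produces the exponent $(k+2\nu)/2$), and a Landsberg--Schaar evaluation of the resulting $k$-dimensional quadratic Gauss sums to assemble the multiplier $\legendre{\det(A)}{d}\legendre{2c}{d}^{k}\varepsilon_d^{-k}$. The hypotheses enter where you say they do; for instance $Ah\in N\Z^k$ and $2N\mid c$ are what make $e\bigl(\tfrac{a}{c}\cdot\tfrac{x^tAx}{2N^2}\bigr)$ depend only on $x$ modulo $cN$, so that the finite sum is well defined. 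Two caveats. First, what you have written is a plan rather than a proof: the entire content of the half-integral-weight multiplier lives in the Gauss-sum evaluation and branch bookkeeping that you defer, so nothing is actually verified beyond the translation case. Second, your $c=0$ warm-up is only correct for $d=+1$: for $\gamma=\left(\begin{smallmatrix}-1&b\\0&-1\end{smallmatrix}\right)$ one has $d=-1$, $\varepsilon_{-1}=i$, $(cz+d)^{(k+2\nu)/2}=i^{k+2\nu}$, and $ah=-h$, so the symbols are not ``all trivial''; consistency there requires the identity $P(-x)=(-1)^{\nu}P(x)$ together with $\legendre{\det(A)}{-1}=1$ for positive definite $A$, and it is worth recording this check since it pins down the branch convention for $(cz+d)^{1/2}$ that you must then carry through the Poisson-summation step.
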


\subsection{Masses of genera and spinor genera}

 For $\mathfrak{h}=\pspn(aL+\nu)$ or $\pgen(aL+\nu)$, define the \begin{it}mass of $\mathfrak{h}$\end{it} by
\begin{equation}\label{eqn:Mass}
\text{Mass}(\mathfrak{h}):= \sum_{aK+\mu\in\mathfrak{h}} \frac{1}{o^+(aK+\mu)},
\end{equation}
where the sum runs over a system of proper classes in $\mathfrak{h}$. Using Lemma \ref{lem-measure-repnum} and \cite[Corollary 2.5]{LSun}, one may relate the masses of the proper spinor genus and the proper genus of a shifted lattice via 
$$
\begin{aligned}
\text{Mass}(\pspn(aL+\nu))&=\frac{1}{g^+(aL+\nu)}\text{Mass}(\pgen(aL+\nu))\\
&= \text{Mass}(\pgen(aL))\frac{\prod_{p<\infty} [O^+(aL_p):O^+(aL_p+\nu)]}{g^+(aL+\nu) },
\end{aligned}
$$
where $g^+(aL+\nu)$ is the number of proper spinor genera in $\pgen(aL+\nu)$. Generally speaking, each of the factors on the right-hand side of the above equation may be explicitly computed; $\text{Mass}(\pgen(aL))$ may be deterimined via the Minkowski--Siegel formula and for almost all prime $p$ we have $[O^+(aL_p):O^+(aL_p+\nu)]=1$, while these indices can be computed in general. Based on work of Xu \cite[Proposition 2.5]{Xu}, a formula for $g^+(aL+\nu)$ is given in Proposition \ref{prop-spinornormmap} and in practice one can evaluate the quantities there, although a general formula is not known. 

In Section \ref{section-thetainthesamespinorgenus}, we develop an algorithm for computing the representatives of the proper (spinor) genus of a shifted lattice, which could in principle be used to directly compute \eqref{eqn:Mass} via the definition, after an appropriate calculation of $o^+(aK+\mu)$.

\subsection{Theta series for cosets}\label{subsection-prelim-thetaseriesofcosets} Let $aL+\nu$ be a coset on a quadratic space $V$ of rank $k$.
Note that $Q(aL+\nu)\subseteq \Z$ since we are assuming $B(L,L)\subseteq \Z$.
For a positive integer $n$, we define
$$
R(n,aL+\nu):=\{x\in aL+\nu : Q(x)=n\} \quad \text{and} \quad r(n,aL+\nu):= |R(n,aL+\nu)|,
$$
and the theta series $\T{aL+\nu}(z)$ of the coset $aL+\nu$ is defined as
$$
\T{aL+\nu}(z):=\sum\limits_{x\in aL+\nu} q^{Q(x)} = \sum\limits_{n=0}^\infty r(n,aL+\nu) q^n.
$$
Note that any coset in $\pgen(aL+\nu)$ has conductor $a$. We define the theta series $\T{\pgen(aL+\nu)}(z)$ of $\pgen(aL+\nu)$ and $r(n,\pgen(aL+\nu))$ by
\begin{equation}\label{defn-thetaofpropergenus}
\begin{aligned}
	\T{\pgen(aL+\nu)}(z)&=\sum\limits_{n=0}^\infty r(n,\pgen(aL+\nu))\cdot q^n\\
&:=\frac{1}{\text{Mass}(\pgen(aL+\nu))}
\left(\sum\limits_{aK+\mu\in\pgen(aL+\nu)}\frac{\T{aK+\mu}(z)}{o^+(aK+\mu)}\right)
\end{aligned}
\end{equation}
and the theta series $\T{\pspn(aL+\nu)}(z)$ of $\pspn(aL+\nu)$ and $r(n,\pspn(aL+\nu))$ by
\begin{equation}\label{defn-thetaofproperspinorgenus}
\begin{aligned}
	\T{\pspn(aL+\nu)}(z)&=\sum\limits_{n=0}^\infty r(n,\pspn(aL+\nu))\cdot q^n \\
	&:= 
\frac{1}{\text{Mass}(\pspn(aL+\nu))}
 \left(\sum\limits_{aK+\mu\in\pspn(aL+\nu)}\frac{\T{aK+\mu}(z)}{o^+(aK+\mu)}\right).
\end{aligned}
\end{equation}
The summation runs over a system of representatives of proper classes in the proper genus or in the proper spinor genus of $aL+\nu$.

For any non-zero integer $d$, let $\chi_d$ denote the character $\chi_d(\cdot)=\legendre{d}{\cdot}$ obtained from the Kronecker symbol.
The following proposition shows that the theta series of cosets of rank $k$ are modular forms of weight $k/2$.
\begin{proposition}\label{prop-thetaofcoset-is-a-modularform}
	Let $aL+\nu$ be a coset on a quadratic space $V$ of rank $k$. Let $N_L$ be the level of $L$ and $d_L$ the discriminant of $L$. Then 
	$$
	\T{aL+\nu}(z)\in 
	\begin{cases} 
		M_{k/2}(\Gamma_0(4N_La^2)\cap \Gamma_1(a), \chi_{4d_L}) & \text{if } k \text{ is odd},\\ 
		M_{k/2}(\Gamma_0(4N_La^2)\cap \Gamma_1(a), \chi_{(-1)^{k/2}4d_L}) & \text{if } k \text{ is even}.
	\end{cases}
	$$
\end{proposition}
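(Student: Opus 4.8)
The plan is to realize $\T{aL+\nu}$ as a single one of Shimura's elementary theta functions $\vartheta(z;h,A,N,1)$ (the case $P=1$, $\nu=0$) and then read off the transformation law directly from Proposition \ref{prop-Shimura-transformation-theta}, deducing holomorphy and growth separately. Fix a basis $e_1,\dots,e_k$ of $L$ with Gram matrix $A=(B(e_i,e_j))$, so that $A$ and $N_LA^{-1}$ are integral and $\det A=d_L$, and write $\nu=\sum_i h_ie_i$ with $h=(h_i)\in\Z^k$. A point of $aL+\nu$ has coordinate vector $w=h+am$ with $m\in\Z^k$, and $Q(w)=w^tAw$, so that $\T{aL+\nu}(z)=\sum_{w\equiv h \pmod{a}}q^{w^tAw}$. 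The only genuine difficulty in matching this to $\vartheta$ is that the congruence modulus here is the conductor $a$, whereas Shimura's integrality conditions \eqref{condition-thetafunctionofShimura} are governed by the level $N_L$; reconciling these two moduli is the heart of the matter.

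The reconciliation is achieved by the change of variable $x=2aN_Lw$. Concretely, I would set $A_S=2a^2A$, $N_S=2a^2N_L$, and $h_S=2aN_Lh$. Under $x=2aN_Lw$ the class $w\equiv h\pmod{a}$ corresponds bijectively to $x\equiv h_S\pmod{N_S}$, while $x^tA_Sx/(2N_S^2)=w^tAw$, whence $\T{aL+\nu}(z)=\vartheta(z;h_S,A_S,N_S,1)$. One then checks \eqref{condition-thetafunctionofShimura}: $A_S=2a^2A$ is integral, $N_SA_S^{-1}=N_LA^{-1}$ is integral, and $A_Sh_S=4a^3N_L\,Ah\in N_S\Z^k$ because $Ah\in\Z^k$. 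This verification is the crux of the argument; the factor $2aN_L$ is exactly what is needed to absorb both the conductor and the level.

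With this identification in hand, the transformation law follows by feeding $\gamma=\left(\begin{smallmatrix}a'&b'\\c'&d'\end{smallmatrix}\right)\in\Gamma_0(4N_La^2)\cap\Gamma_1(a)$ into Proposition \ref{prop-Shimura-transformation-theta}. The required congruence $c'\equiv0\pmod{2N_S}$ is precisely $c'\equiv0\pmod{4N_La^2}$; the additional hypothesis $b'\equiv0\pmod 2$ I arrange by right-translating $\gamma$ by a power of $\left(\begin{smallmatrix}1&1\\0&1\end{smallmatrix}\right)$, which is legitimate because $Q(aL+\nu)\subseteq\Z$ makes $\T{aL+\nu}$ invariant under $z\mapsto z+1$ and because $\det\gamma=1$ with $4\mid c'$ forces $a'$ odd. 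Proposition \ref{prop-Shimura-transformation-theta} returns $\vartheta(z;a'h_S,A_S,N_S,1)$ on the right-hand side, and here the hypothesis $\gamma\in\Gamma_1(a)$ enters decisively: since $a'\equiv1\pmod{a}$ we get $a'h_S\equiv h_S\pmod{N_S}$, so the shifted theta coincides with the original and we obtain a true multiplier rather than a different function.

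It remains to identify the multiplier with the asserted character and to dispatch the analytic conditions. The exponential prefactor is $e(a'b'\,h_S^tA_Sh_S/2N_S^2)=e(a'b'Q(\nu))=1$ since $Q(\nu)\in\Z$. Using $\det A_S=2^ka^{2k}d_L$, $\gcd(a,d')=1$ (as $d'\equiv1\pmod{a}$), and $\legendre{c'}{d'}^{2}=1$, the product $\legendre{\det A_S}{d'}\legendre{2c'}{d'}^k\varepsilon_{d'}^{-k}$ collapses, once the half-integral-weight automorphy factor $\legendre{c'}{d'}\varepsilon_{d'}^{k}$ is divided out (the integral-weight slash operator in the even case being handled the same way through $\varepsilon_{d'}^{-k}=\legendre{(-1)^{k/2}}{d'}$), to $\legendre{d_L}{d'}$ when $k$ is odd and to $\legendre{(-1)^{k/2}d_L}{d'}$ when $k$ is even; since $d'$ is odd these equal $\chi_{4d_L}(d')$ and $\chi_{(-1)^{k/2}4d_L}(d')$ respectively. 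Holomorphy on $\H$ is immediate from absolute convergence of the defining series, and polynomial growth at the cusps is the standard bound for theta series, yielding condition (3). I expect the main obstacle to be entirely bookkeeping of two kinds: first, pinning down the scaling $x=2aN_Lw$ that makes \eqref{condition-thetafunctionofShimura} hold simultaneously in $a$ and $N_L$, and second, carefully tracking the $\varepsilon_{d'}$, Kronecker, and power-of-two factors so that they assemble into exactly $\chi_{4d_L}$ or $\chi_{(-1)^{k/2}4d_L}$.
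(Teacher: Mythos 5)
Your proposal is correct and follows essentially the same route as the paper: both realize $\T{aL+\nu}$ as a single one of Shimura's elementary theta functions and then invoke Proposition \ref{prop-Shimura-transformation-theta}, the only difference being that the paper uses the identification $\T{aL+\nu}(z)=\vartheta(2az;N_Lv,aA,N_La,1)$, whose rescaled argument turns $\gamma\in\Gamma_0(4N_La^2)$ into a matrix with automatically even upper-right entry and unchanged lower-right entry, whereas your parameter choice $(2aN_Lh,2a^2A,2a^2N_L)$ forces the right-translation by $\left(\begin{smallmatrix}1&1\\0&1\end{smallmatrix}\right)$ and the attendant (routine but real) bookkeeping of replacing $d'$ by $d'\pm c'$ inside the Kronecker and $\varepsilon$ factors. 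Your verification of \eqref{condition-thetafunctionofShimura}, the use of $a'\equiv 1\pmod{a}$ to recover the same theta function, and the final identification of the character all match the paper's computation.
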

\begin{proof}
Let $L=\Z e_1 + \cdots + \Z e_k$, $A$ the Gram matrix of $L$ with respect to the basis $\{e_i\}$, and let $v=(v_1,\ldots, v_k)^t$ where $\nu=v_1e_1+\cdots v_ke_k$ for some $v_i\in\Z$. We abbreviate $N:=N_L$ for ease of notation. Note that both $aA$ and $(Na)^{-1}(aA)=N^{-1}A$ have coefficients in $\Z$, and $(aA)(Nv)\in (aN)\Z^k$.
Moreover,
$$
\vartheta(2az;Nv,aA,Na,1)=\sum\limits_{x\in\Z^k, \, x\equiv Nv \pmod{Na}}  q^{2a \cdot x^t (aA) x /2(Na)^2} =\sum\limits_{x\in\Z^k, \, x\equiv v \pmod{a}}  q^{x^t A x} = \T{aL+\nu}(z).
$$
For any $\gamma=\left(\begin{smallmatrix} p&q\\ r&s\end{smallmatrix}\right)\in \Gamma_0(4N a^2)$, the matrix $\gamma'=\left(\begin{smallmatrix} p& 2aq\\ r/2a&s\end{smallmatrix}\right)\in \Gamma_0(2Na)$, and note that $2a(\gamma z) = \gamma'(2az)$. By Proposition \ref{prop-Shimura-transformation-theta}, we have 
$$
\begin{aligned}
	\T{aL+\nu}&(\gamma z)= \vartheta(2a(\gamma z);Nv,aA,Na,1)=\vartheta(\gamma' (2az);Nv,aA,Na,1)\\
	&=e\left(\frac{p(2aq)N^2aQ(\nu)}{2(Na)^2}\right)\legendre{a^k\det(A)}{s}\legendre{2(r/2a)}{s}^k\varepsilon_s^{-k}(rz+s)^{k/2}\vartheta(2az;pNv,aA,Na,1)\\
	&=\legendre{d_L}{s}\legendre{r}{s}^k\varepsilon_s^{-k}(rz+s)^{k/2}\T{aL+p\nu}(z).
\end{aligned}
$$
Hence we have obtained for any $\gamma=\left(\begin{smallmatrix} p&q\\ r&s\end{smallmatrix}\right)\in \Gamma_0(4N_La^2)$  that
\begin{equation}\label{eqn-theta-transformed-by-Gamma0}
	(\T{aL+\nu}|_{k/2}\gamma)(z) = \chi(s)\T{aL+p\nu}(z)=\chi(\gamma)\T{aL+p\nu}(z),
\end{equation}
where $\chi=\chi_{4d_L}$ if $k$ is odd, and $\chi=\chi_{(-1)^{k/2}4d_L}$ if $k$ is even. Furthermore, if $p\equiv1\pmod{a}$, then we have $\T{aL+p\nu}(z)=\T{aL+\nu}(z)$, and hence this proves the proposition.
\end{proof}

The next proposition allows us to decompose the space $M_{k/2}(\Gamma_0(4N_La^2)\cap \Gamma_1(a), \chi_{4d_L})$ into the spaces $M_{k/2}(\Gamma_0(4N_La^2), \chi\chi_{4d_L})$.

\begin{proposition}\label{prop-modformspace-decomp}
	Let $k$, $M$, and $N$ be positive integers such that $M\mid N$, and let $\psi$ be a Dirichlet character modulo $N$. Then
	$$
	M_{k/2}(\Gamma_0(N)\cap \Gamma_1(M),\psi)= \bigoplus_{\chi} M_{k/2}(\Gamma_0(N),\chi\psi),
	$$
	where $\chi$ runs over all Dirichlet characters modulo $M$ such that $\chi(-1)=\psi(-1)$ if $k$ is odd, and $\chi(-1)=(-1)^{k/2}\psi(-1)$ if $k$ is even.
\end{proposition}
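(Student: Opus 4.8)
The plan is to exhibit $M_{k/2}(\Gamma_0(N)\cap\Gamma_1(M),\psi)$ as a module over the finite abelian group $(\Z/M\Z)^\times$ and to decompose it into its isotypic components, which I will identify with the summands $M_{k/2}(\Gamma_0(N),\chi\psi)$. Write $\Gamma:=\Gamma_0(N)\cap\Gamma_1(M)$. First I would record the group-theoretic backbone: since $M\mid N$, every $\gamma=\left(\begin{smallmatrix} a&b\\c&d\end{smallmatrix}\right)\in\Gamma_0(N)$ has $c\equiv0\pmod M$ and $\gcd(d,M)=1$, so $\gamma\mapsto d\bmod M$ is a well-defined homomorphism $\Gamma_0(N)\to(\Z/M\Z)^\times$. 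Its kernel is exactly $\Gamma$ (the relation $ad-bc=1$ forces $a\equiv1\pmod M$ once $d\equiv1\pmod M$), and it is surjective (lift a unit mod $M$ to an integer $d$ coprime to $N$ and complete it to a matrix $\left(\begin{smallmatrix} a&b\\N&d\end{smallmatrix}\right)\in\Gamma_0(N)$). Hence $\Gamma\trianglelefteq\Gamma_0(N)$ with $\Gamma_0(N)/\Gamma\cong(\Z/M\Z)^\times$.

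The key analytic input is that the slash operators furnish a genuine right action of $\Gamma_0(N)$: for $k$ even this is automatic, and for $k$ odd (where implicitly $4\mid N$) it follows from the cocycle relation for the theta automorphy factor established by Shimura, giving $(f|_{k/2}\gamma_1)|_{k/2}\gamma_2=f|_{k/2}(\gamma_1\gamma_2)$. I would then check that this action preserves $V:=M_{k/2}(\Gamma,\psi)$: using that $\gamma\mapsto d\bmod N$ is a homomorphism into an abelian group and that $\Gamma$ is normal, for $\delta\in\Gamma$ one gets $(f|_{k/2}\gamma)|_{k/2}\delta=(f|_{k/2}(\gamma\delta\gamma^{-1}))|_{k/2}\gamma=\psi(d(\delta))\,(f|_{k/2}\gamma)$, so $f|_{k/2}\gamma$ still transforms by $\psi$ under $\Gamma$. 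Twisting away the scalar by setting $T_\gamma(f):=\psi(d(\gamma))^{-1}f|_{k/2}\gamma$ produces a homomorphism $\Gamma_0(N)\to\GL(V)$ that is trivial on $\Gamma$, hence a representation of $(\Z/M\Z)^\times$ on $V$.

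Next I would decompose $V$ into character eigenspaces. As $(\Z/M\Z)^\times$ is finite abelian and we work over $\C$ (and the spaces are finite-dimensional), the averaging operators $P_\chi=|(\Z/M\Z)^\times|^{-1}\sum_g\overline{\chi(g)}\,T_g$ satisfy $\sum_\chi P_\chi=\mathrm{id}$ and $P_\chi P_{\chi'}=\delta_{\chi,\chi'}P_\chi$, giving $V=\bigoplus_\chi V_\chi$ over Dirichlet characters $\chi\bmod M$. Unwinding the definition, $f\in V_\chi$ means $f|_{k/2}\gamma=\chi(d(\gamma))\psi(d(\gamma))f=(\chi\psi)(d(\gamma))f$ for all $\gamma\in\Gamma_0(N)$, that is $V_\chi=M_{k/2}(\Gamma_0(N),\chi\psi)$; holomorphy and the growth condition transfer automatically because $\Gamma\subseteq\Gamma_0(N)$ and these are $\SL_2(\Z)$-local conditions on $f$.

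It remains to see that only the characters in the statement contribute. For this I would evaluate the action of $-I\in\Gamma_0(N)$: computing $\legendre{0}{-1}$, $\varepsilon_{-1}$, and the branch of $(cz+d)^{-k/2}$ at $cz+d=-1$ shows $f|_{k/2}(-I)=f$ when $k$ is odd and $f|_{k/2}(-I)=(-1)^{k/2}f$ when $k$ is even. On $V_\chi$ this equals $(\chi\psi)(-1)f$, so $V_\chi=0$ unless $\chi(-1)=\psi(-1)$ (for $k$ odd) or $\chi(-1)=(-1)^{k/2}\psi(-1)$ (for $k$ even), and the sum collapses to exactly the characters listed. I expect the main obstacle to be the genuine-action claim in half-integral weight, i.e. the triviality of the multiplier cocycle on $\Gamma_0(4)$, together with correctly fixing the branch of the automorphy factor at $-I$ to extract the parity conditions; the remainder is routine group theory and the representation theory of finite abelian groups.
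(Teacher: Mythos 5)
Your proof is correct, and it is worth noting that the paper itself gives no argument here: its ``proof'' consists of citing \cite[Theorem 2.5]{Cho} for the even case and asserting that the odd case goes the same way. What you have written out is precisely the standard diamond-operator argument that underlies such statements: identify $\Gamma_0(N)/(\Gamma_0(N)\cap\Gamma_1(M))\cong(\Z/M\Z)^\times$ via $\gamma\mapsto d\bmod M$, let this finite abelian group act on $M_{k/2}(\Gamma_0(N)\cap\Gamma_1(M),\psi)$ through the (twisted) slash operators, and split into isotypic components with the idempotents $P_\chi$. All the individual steps check out: the kernel computation (using $ad\equiv1\pmod M$), the normality argument showing $f|_{k/2}\gamma$ still has nebentypus $\psi$ on $\Gamma$ (which needs exactly that $\gamma\mapsto d\bmod N$ is a homomorphism to an abelian group, as you say), the well-definedness of $T_\gamma$ on cosets, and the evaluation at $-I$ giving $f|_{k/2}(-I)=f$ for $k$ odd (the factors $\varepsilon_{-1}^{k}=i^k$ and $(-1)^{-k/2}=i^{-k}$ cancel) versus $(-1)^{k/2}f$ for $k$ even, which yields the stated parity restriction on $\chi$. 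You also correctly isolate the one genuinely non-routine ingredient in half-integral weight, namely that the theta multiplier is a cocycle on $\Gamma_0(4)$ so that the slash operators compose; this silently requires $4\mid N$ when $k$ is odd, a hypothesis absent from the statement but satisfied in the paper's only application ($N=4N_La^2$) and implicit in the paper's definition of $M_{\kappa}(\Gamma,\chi)$, which already demands $\Gamma\subseteq\Gamma_0(4)$. The only cosmetic remark is that $\gamma\mapsto T_\gamma$ is an anti-homomorphism (it is a right action), but since the target group is abelian this is harmless. In short, your proof is complete and self-contained where the paper merely defers to the literature.
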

\begin{proof}
	The proposition for the case when $k$ is even was proved in \cite[Theorem 2.5]{Cho}. The case when $k$ is odd may also be proved in the same manner.
\end{proof}
Now let $k$ be an odd positive integer, $M$ and $N$ positive integers such that $M\mid N$, and $\psi$ a Dirichlet character modulo $N$.
For a prime number $p$, we define the Hecke operator $T(p^2)$ on the space $M_{k/2}(\Gamma_0(N)\cap \Gamma_1(M),\psi)$ by 
$$
T(p^2)=\bigoplus_{\chi} T|^N_{k/2,\chi\psi}(p^2),
$$
where $\chi$ runs over all Dirichlet characters modulo $M$, and $T|^N_{k/2,\chi\psi}(p^2)$ is the Hecke operator on the space $M_{k/2}(\Gamma_0(N),\chi\psi)$ defined in \cite{Shimura}.
The following theorem shows the relation between Hecke operators and Fourier coefficients of theta series $\T{aL+\nu}(z)$ of cosets.

\begin{theorem}\label{thm-Hecke-actonthetaofcoset}
	Let $k\ge 3$ be an odd integer, $aL+\nu$ a coset of rank $k$, $N_L$ the level of $L$, $d_L$ the discriminant of $L$, and let $p$ be a prime number.
	Put 
	$$
	(\T{aL+\nu}|T(p^2))(z)=\sum_{n\ge 0} b(n)q^n.
	$$ 
	If $p\mid 4N_La^2$, then $b(n)=r(p^2n,aL+\nu)$. If $p\nmid4N_La^2$, then
	$$
	b(n)=r(p^2n,aL+\nu)+\legendre{-1}{p}^{\lambda}\legendre{4d_Ln}{p}p^{\lambda-1} \cdot r(n,aL+\bar{p}\nu)+ \legendre{4d_L}{p^2}p^{k-2}\cdot r(n/p^2,aL+\bar{p}^2\nu),
	$$
	where $\lambda=(k-1)/2$, and $\bar{p}$ is an integer which is an inverse of $p$ modulo $a$.

\end{theorem}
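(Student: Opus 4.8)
The plan is to reduce the statement to Shimura's explicit three-term formula for $T(p^2)$ acting on $M_{k/2}(\Gamma_0(N),\chi)$ (with $N=4N_La^2$) by first diagonalizing $\T{aL+\nu}$ against the Dirichlet characters modulo $a$ that appear in the decomposition of Proposition \ref{prop-modformspace-decomp}. For each $t\in(\Z/a\Z)^\times$ the shifted coset $aL+t\nu$ has the same level $N_L$, discriminant $d_L$, and conductor $a$, so Proposition \ref{prop-thetaofcoset-is-a-modularform} places every $\T{aL+t\nu}$ in $M_{k/2}(\Gamma_0(N)\cap\Gamma_1(a),\chi_{4d_L})$. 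For a character $\chi$ modulo $a$ I would set
\[
g_\chi := \sum_{t\in(\Z/a\Z)^\times}\chi(t)\,\T{aL+t\nu}.
\]
Using the transformation law \eqref{eqn-theta-transformed-by-Gamma0}, which for $\gamma=\left(\begin{smallmatrix}p&q\\r&s\end{smallmatrix}\right)\in\Gamma_0(N)$ reads $\T{aL+t\nu}|_{k/2}\gamma=\chi_{4d_L}(s)\T{aL+pt\nu}$, together with the congruence $ps\equiv 1\pmod a$ (from $\det\gamma=1$ and $a\mid N\mid r$), the substitution $t\mapsto \bar s t$ gives $g_\chi|_{k/2}\gamma=\chi(s)\chi_{4d_L}(s)g_\chi$, so $g_\chi\in M_{k/2}(\Gamma_0(N),\chi\chi_{4d_L})$. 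By character orthogonality $\T{aL+\nu}=\phi(a)^{-1}\sum_\chi g_\chi$, which is exactly the isotypic decomposition of Proposition \ref{prop-modformspace-decomp}. (Since $-1_V$ sends $aL+t\nu$ to $aL-t\nu$, one has $\T{aL+t\nu}=\T{aL-t\nu}$, whence $g_\chi=0$ unless $\chi(-1)=1$; as $\chi_{4d_L}(-1)=1$, this matches the parity constraint in that proposition, so nothing is lost.)

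Next, by the definition $T(p^2)=\bigoplus_\chi T|^N_{k/2,\chi\chi_{4d_L}}(p^2)$ I would apply Shimura's coefficient formula to each $g_\chi=\sum_n c_\chi(n)q^n$, where $c_\chi(n)=\sum_t\chi(t)\,r(n,aL+t\nu)$. For $p\nmid N$ the $n$-th coefficient of $g_\chi|T(p^2)$ is
\[
c_\chi(p^2n)+(\chi\chi_{4d_L})(p)\legendre{(-1)^\lambda n}{p}p^{\lambda-1}c_\chi(n)+(\chi\chi_{4d_L})(p^2)p^{2\lambda-1}c_\chi(n/p^2),
\]
with $\lambda=(k-1)/2$ and $2\lambda-1=k-2$. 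Summing over $\chi$, dividing by $\phi(a)$, and collapsing the three inner character sums by orthogonality ($\sum_\chi\chi(t)=\phi(a)$ when $t\equiv 1\pmod a$ and $0$ otherwise, and likewise $\sum_\chi\chi(pt)$ and $\sum_\chi\chi(p^2t)$ detect $t\equiv\bar p$ and $t\equiv\bar p^2$) isolates precisely $r(p^2n,aL+\nu)$, $r(n,aL+\bar p\nu)$, and $r(n/p^2,aL+\bar p^2\nu)$; rewriting $\chi_{4d_L}(p)\legendre{(-1)^\lambda n}{p}=\legendre{-1}{p}^\lambda\legendre{4d_Ln}{p}$ and $\chi_{4d_L}(p^2)=\legendre{4d_L}{p^2}$ yields the claimed identity. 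For $p\mid N$ the Nebentypus $\chi\chi_{4d_L}$ modulo $N$ satisfies $(\chi\chi_{4d_L})(p)=0$, so only the first term survives and $b(n)=r(p^2n,aL+\nu)$.

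I expect the main obstacle to be the diagonalization step: verifying that the $g_\chi$ genuinely realize the isotypic decomposition of Proposition \ref{prop-modformspace-decomp}, i.e.\ that the index shift $t\nu$ inside $\T{aL+t\nu}$ is faithfully tracked by the character $\chi$ under the $\Gamma_0(N)$-action. This is exactly where \eqref{eqn-theta-transformed-by-Gamma0} — that slashing by a matrix with upper-left entry $p$ turns $\T{aL+\nu}$ into $\T{aL+p\nu}$ — must be combined with $p\equiv\bar s\pmod a$ to convert an index shift into a character value, and where the parity bookkeeping (odd $\chi$ forcing $g_\chi=0$) has to be reconciled with the parity hypothesis of Proposition \ref{prop-modformspace-decomp}. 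Once this dictionary between the three cosets $aL+\nu$, $aL+\bar p\nu$, $aL+\bar p^2\nu$ and the three terms of Shimura's formula is pinned down, the remainder is routine orthogonality and bookkeeping of Kronecker symbols.
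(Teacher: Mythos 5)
Your proposal is correct and follows essentially the same route as the paper: decompose $\T{aL+\nu}$ into its $\chi$-isotypic components for Dirichlet characters modulo $a$ (you build them explicitly as $g_\chi=\sum_t\chi(t)\T{aL+t\nu}$, while the paper writes $\T{aL+\nu}=\sum_\chi f_\chi$ and derives the inverse relation $r(n,aL+\bar m\nu)=\sum_\chi\chi(m)a_\chi(n)$ from \eqref{eqn-theta-transformed-by-Gamma0}; these are the same computation read in opposite directions, with $g_\chi=\phi(a)f_\chi$), then apply Shimura's three-term formula componentwise and collapse by orthogonality. The only blemish is the reindexing in your modularity check, which should be $t\mapsto st$ (equivalently $u=pt$, using $\bar p\equiv s\pmod a$) rather than $t\mapsto\bar s t$; the stated conclusion $g_\chi|_{k/2}\gamma=\chi(s)\chi_{4d_L}(s)g_\chi$ is nevertheless correct.
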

\begin{proof}
	Noting that 
	$$
	\T{aL+\nu}(z) \in M_{k/2}(\Gamma_0(4N_La^2)\cap \Gamma_1(a), \chi_{4d_L})=\bigoplus_{\chi \pmod{a}}M_{k/2}(\Gamma_0(4N_La^2),\chi\chi_{4d_L}),
	$$
	let us write $\T{aL+\nu}(z)=\sum\limits_{\chi\pmod{a}} f_\chi (z)$ for some $f_\chi(z)=\sum\limits_{n\ge 0} a_\chi(n)q^n\in M_{k/2}(\Gamma_0(4N_La^2),\chi\chi_{4d_L})$.
	By the definition of $T(p^2)$ and by \cite[Theorem 1.7]{Shimura}, we have $b(n)=\sum_{\chi}b_{\chi}(n)$ with 
	\begin{equation}\label{eqn-coefficients-Heckeoperator-proof:1}
	b_{\chi}(n):=a_\chi(p^2n)+ \legendre{4d_L}{p}\legendre{-1}{p}^\lambda\legendre{n}{p}p^{\lambda-1}\chi(p)a_\chi(n)+\legendre{4d_L}{p^2}p^{k-2}\chi(p^2)a_\chi(n/p^2).
	\end{equation}
	Note that if $p\mid 4N_La^2$, then $\legendre{4d_L}{p}\chi(p)=0$, and hence $b(n)=\sum\limits_{\chi \pmod{a}} a_\chi(p^2n)=r(p^2n,aL+\nu)$. 
	Now we assume that $p\nmid 4N_La^2$. Let $m$ be an integer such that $(m,4N_La^2)=1$. Then there is a matrix $\gamma_m=\left(\begin{smallmatrix}
		\ast & \ast \\
		\ast & m
	\end{smallmatrix}\right)\in \Gamma_0(4N_La^2)$. Note that by \eqref{eqn-theta-transformed-by-Gamma0},
	$$
	\chi_{4d_L}(m)\T{aL+\bar{m}\nu}(z)=(\T{aL+\nu}|_{\frac{k}{2}}\gamma_m)(z)=\sum\limits_{\chi\pmod{a}} (f_\chi|_{\frac{k}{2}}\gamma_m)(z)= \chi_{4d_L}(m)\sum_{\chi\pmod{a}} \chi(m)f_\chi(z),
	$$
	where $\bar{m}$ is an integer which is an inverse of $m$ modulo $a$. 
	By comparing the Fourier coefficients of both sides, we have
	\begin{equation}\label{eqn-relation-on-coefficients-repnum-chi}
	r(n,aL+\bar{m}\nu)=\sum\limits_{\chi \pmod{a}} \chi(m)a_\chi(n).
	\end{equation}
	for any integer $n\ge 0$ and $(m,4N_La^2)=1$. Plugging in the equalities \eqref{eqn-relation-on-coefficients-repnum-chi} with $m=p$ and $m=p^2$ into \eqref{eqn-coefficients-Heckeoperator-proof:1}, we have the formula in the statement of the theorem.
\end{proof}

Now we define some notations for the ternary case, the case when $k=3$. We put
\[
U=\bigoplus_{\chi \pmod{a}} U_\chi \quad\text{and} \quad U^\perp=\bigoplus_{\chi \pmod{a}} U_\chi^\perp,
\]
where
\[
U_\chi = \bigoplus_{t : \text{square-free}} U_t(4N_La^2,\chi\chi_{4d_L}) \subseteq S_{3/2}(\Gamma_0(4N_La^2),\chi\chi_{4d_L}),
\]
and $U_\chi^\perp$ denotes the space orthogonal to $U_\chi$ in $S_{3/2}(\Gamma_0(4N_La^2),\chi\chi_{4d_L})$.
We note that each subspace occuring in the decomposition of $U_\chi$ is an eigenspace for the Hecke operators $T(p^2)$, as follows.
\begin{proposition}[Hilfssatz 2 of \cite{Schulze-PillotTernaryTheta}]\label{prop-U_t,chi-eigenspace}
	Let $p$ be a prime number such that $p\nmid 4N_La^2$. 
	Then $U_t(\Gamma_0(4N_La^2),\chi \chi_{d_L})$ is an eigenspace for $T(p^2)$ with eigenvalue $\chi(p)\legendre{-td_L}{p}(p+1)$.
\end{proposition}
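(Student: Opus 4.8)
The plan is to reduce the claim to a direct computation on the spanning set of $U_t(\Gamma_0(4N_La^2),\chi\chi_{4d_L})$ exhibited in \eqref{UtNchi-is-spanned-by}. Write $\chi'=\chi\chi_{4d_L}$, so that $U_t$ sits inside $S_{3/2}(\Gamma_0(4N_La^2),\chi')$ and is spanned by the forms $h(tu^2z,\psi)$ with $\psi=\chi'\legendre{-4t}{\cdot}$ and $4tm_\psi^2u^2\mid 4N_La^2$. Since $T(p^2)$ acts on this summand as Shimura's operator $T|^{4N_La^2}_{3/2,\chi'}(p^2)$, it suffices to show that each such $h(tu^2z,\psi)$ is a $T(p^2)$-eigenform with eigenvalue $\chi(p)\legendre{-td_L}{p}(p+1)$ and that this value is independent of $u$; the eigenspace assertion then follows because these forms span $U_t$. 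Specializing the coefficient formula \eqref{eqn-coefficients-Heckeoperator-proof:1} to $k=3$ (so $\lambda=1$), for $f=\sum_n a(n)q^n\in S_{3/2}(\Gamma_0(4N_La^2),\chi')$ with $p\nmid 4N_La^2$ the coefficients of $f\mid T(p^2)=\sum_n b(n)q^n$ are $b(n)=a(p^2n)+\legendre{-n}{p}\chi'(p)a(n)+p\,\chi'(p)^2 a(n/p^2)$.

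Next I would record the Fourier data of $f=h(tu^2z,\psi)=\sum_{m\ge1}\psi(m)m\,q^{tu^2m^2}$: the coefficient $a(n)$ equals $\psi(\ell)\ell$ when $n=tu^2\ell^2$ for some $\ell\ge1$ and vanishes otherwise. Because $4tm_\psi^2u^2\mid 4N_La^2$ and $p\nmid 4N_La^2$, we have $p\nmid tum_\psi$, so $\psi(p)\ne0$; moreover $\legendre{-4t}{p}=\legendre{-t}{p}$ for odd $p$, which gives the key identity $\psi(p)=\chi'(p)\legendre{-4t}{p}=\chi'(p)\legendre{-t}{p}$. The structural point is that both the ``raising'' coefficient $a(p^2n)$ and the ``lowering'' coefficient $a(n/p^2)$ are again supported on the progression $n=tu^2\ell^2$, and conversely $b(n)\ne0$ forces $n$ to be of this shape, so $f\mid T(p^2)$ stays inside $U_t$.

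Then I would evaluate $b(n)$ at a support point $n=tu^2\ell^2$, splitting into two cases. If $p\nmid\ell$, then $a(n/p^2)=0$ and $\legendre{-n}{p}=\legendre{-t}{p}$, so using $\psi(p)=\chi'(p)\legendre{-t}{p}$ one finds $b(n)=\psi(p)\psi(\ell)p\ell+\chi'(p)\legendre{-t}{p}\psi(\ell)\ell=\chi'(p)\legendre{-t}{p}(p+1)\,a(n)$. If $p\mid\ell$, write $\ell=p\ell'$; the middle term now drops out since $\legendre{-n}{p}=0$, and using $\psi(p)^2=\chi'(p)^2$ one computes $a(p^2n)=\chi'(p)^2\psi(\ell')p^2\ell'$ and $p\,\chi'(p)^2a(n/p^2)=\chi'(p)^2\psi(\ell')p\ell'$, whose sum $\chi'(p)^2\psi(\ell')\ell'\,p(p+1)$ again equals $\chi'(p)\legendre{-t}{p}(p+1)a(n)$. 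In both cases $b(n)=\chi'(p)\legendre{-t}{p}(p+1)a(n)$, so $f\mid T(p^2)=\chi'(p)\legendre{-t}{p}(p+1)f$. Finally $\chi'(p)\legendre{-t}{p}=\chi(p)\legendre{4d_L}{p}\legendre{-t}{p}=\chi(p)\legendre{-td_L}{p}$ for odd $p$ (so the discrepancy between $\chi_{4d_L}$ and $\chi_{d_L}$ in the statement is immaterial at $p$), yielding exactly the asserted eigenvalue, independent of $u$ and $\psi$. The only genuinely delicate point is the case $p\mid\ell$, where one must verify that the $a(p^2n)$ and $p\,a(n/p^2)$ contributions recombine with the correct power of $p$ so as to reproduce the same eigenvalue as in the generic case $p\nmid\ell$.
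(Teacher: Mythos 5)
Your argument is correct. Note first that the paper does not actually prove this proposition: it is imported verbatim as Hilfssatz 2 of \cite{Schulze-PillotTernaryTheta}, so there is no internal proof to compare against. What you supply is a self-contained verification by direct computation on the spanning set \eqref{UtNchi-is-spanned-by}, which is the standard (and essentially the original) route. The details check out: the specialization of \eqref{eqn-coefficients-Heckeoperator-proof:1} to $k=3$, $\lambda=1$ is right; the divisibility $4tm_\psi^2u^2\mid 4N_La^2$ together with $p\nmid 4N_La^2$ does guarantee $p\nmid tum_\psi$, so $\psi(p)=\chi'(p)\legendre{-t}{p}\neq 0$; the two cases $p\nmid\ell$ and $p\mid\ell$ both return the eigenvalue $\chi'(p)\legendre{-t}{p}(p+1)$ (in the second case the identity $\legendre{-t}{p}^2=1$ is what makes $\chi'(p)^2\psi(\ell')\ell'p(p+1)$ equal to $\chi'(p)\legendre{-t}{p}(p+1)a(n)$, exactly as you compute); and the check that $b(n)=0$ off the progression $n=tu^2\ell^2$ is needed and you address it. Your closing remark that the discrepancy between $\chi_{d_L}$ in the statement and $\chi_{4d_L}$ elsewhere in the paper is invisible at the odd prime $p$ is also correct, since $\legendre{4}{p}=1$. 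The one thing worth making explicit, which you assert but do not justify, is that $T(p^2)$ restricted to the summand $S_{3/2}(\Gamma_0(4N_La^2),\chi\chi_{4d_L})$ is Shimura's operator $T|^{4N_La^2}_{3/2,\chi\chi_{4d_L}}(p^2)$; this is immediate from the paper's definition of $T(p^2)$ as a direct sum over characters, so it is a cosmetic rather than a substantive gap.
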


\section{Some algebraic structure of lattice cosets}\label{section-algebraicstructureofcosets}
In this section, we introduce several lemmas regarding algebraic structures of lattice cosets, which will be used in the following sections.
\subsection{Genera of lattice cosets with the same conductor}
The following lemma shows some properties shared by the cosets of conductor $a$ in $(aL+\Z \nu) / (aL)$.
\begin{lemma}\label{lem-genus-structure}
	Let $s$ be an integer coprime to the conductor $a$ of $aL+\nu$. We have the following:
\begin{enumerate}[leftmargin=*, label={\rm(\arabic*)}]
	\item $O^+(aL+\nu)=O^+(aL+s\nu)$ and $O^+(aL_p+\nu)=O^+(aL_p+s\nu)$ for any prime $p$. 
	\item If $\pgen(aL+\nu)=\sqcup_{1\le i \le h_{\operatorname{gen}}} \pcls(aL_i+\nu_i)$, then $\pgen(aL+s\nu)=\sqcup_{1\le i \le h_{\operatorname{gen}}} \pcls(aL_i+s\nu_i)$.
	\item If $\pspn(aL+\nu)=\sqcup_{1\le i \le h_{\operatorname{spn}}} \pcls(aL_i+\nu_i)$, then $\pspn(aL+s\nu)=\sqcup_{1\le i \le h_{\operatorname{spn}}} \pcls(aL_i+s\nu_i)$.
\end{enumerate}
\end{lemma}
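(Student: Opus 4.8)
The plan is to handle part (1) directly from the definitions and to deduce parts (2) and (3) from a single ``twisting'' map. Throughout, fix an integer $s'$ with $ss'\equiv 1 \pmod{a}$, which exists since $\gcd(s,a)=1$.

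For part (1), I would first observe that $\sigma(aL+\nu)=aL+\nu$ forces both $\sigma(aL)=aL$ (the differences of elements of $aL+\nu$ are exactly $aL$) and $\sigma(\nu)\equiv \nu \pmod{aL}$, and conversely these two conditions give $\sigma(aL+\nu)=aL+\nu$. Hence it suffices to show that, for $\sigma\in O^+(V)$ with $\sigma(aL)=aL$, one has $\sigma(\nu)\equiv\nu\pmod{aL}$ if and only if $\sigma(s\nu)\equiv s\nu\pmod{aL}$. The forward implication is immediate upon multiplying by $s$. For the converse, multiply $s(\sigma(\nu)-\nu)\in aL$ by $s'$ and write $ss'=1+ka$; since $\nu\in L$ gives $a\nu\in aL$ and $\sigma(aL)=aL$ gives $\sigma(a\nu)-a\nu\in aL$, the term $ka(\sigma(\nu)-\nu)$ lies in $aL$, leaving $\sigma(\nu)-\nu\in aL$. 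The identical argument localizes verbatim to yield $O^+(aL_p+\nu)=O^+(aL_p+s\nu)$ for every prime $p$ (for $p\nmid a$ both sides equal $O^+(L_p)$, as then $aL_p=L_p\ni\nu$).

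For parts (2) and (3) I would introduce the twisting map $\Phi_s$ sending a conductor-$a$ coset $aM+\mu$ to $aM+s\mu$. The preliminary checks are that $\Phi_s$ is well defined (independent of the representative $\mu$ modulo $aM$), that its image again has conductor $a$ (the order of $s\mu$ in $M/aM$ equals that of $\mu$ since $s$ is a unit modulo $a$), and that $\Phi_{s'}$ is its inverse, so $\Phi_s$ is a bijection. The crucial property is that $\Phi_s$ intertwines the isometry actions: for $\sigma\in O^+(V)$ one has $\sigma(aM+s\mu)=a\sigma(M)+s\sigma(\mu)=\Phi_s(\sigma(aM+\mu))$, and the same computation place-by-place shows that $\Phi_s$ commutes with the ad\`elic action of $O_A^+(V)$. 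Concretely, if $\Sigma=(\Sigma_p)$ sends $aL+\nu$ to $aM+\mu$, then $\Sigma_p(aL_p+s\nu)=aM_p+s\Sigma_p(\nu)\equiv aM_p+s\mu \pmod{aM_p}$ at each $p$, so $\Sigma(aL+s\nu)=aM+s\mu=\Phi_s(\Sigma(aL+\nu))$.

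Granting the intertwining property, parts (2) and (3) follow at once. Since $\Phi_s(aL+\nu)=aL+s\nu$ and $\Phi_s$ commutes with the action of $O_A^+(V)$ (resp.\ of $O^+(V)O_A'(V)$), applying $\Phi_s$ to the orbit of $aL+\nu$ gives the corresponding orbit of $aL+s\nu$, i.e.\ $\Phi_s(\pgen(aL+\nu))=\pgen(aL+s\nu)$ and $\Phi_s(\pspn(aL+\nu))=\pspn(aL+s\nu)$, while $\Phi_s(\pcls(aL_i+\nu_i))=\pcls(aL_i+s\nu_i)$. As $\Phi_s$ is a bijection it preserves disjointness, so applying it to $\pgen(aL+\nu)=\sqcup_i \pcls(aL_i+\nu_i)$ and to the spinor genus decomposition yields exactly the asserted decompositions with the same lattices $aL_i$ and shifts twisted by $s$. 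The step I expect to be the main obstacle is verifying the intertwining property for the ad\`elic action, since it requires unwinding how $O_A^+(V)$ acts on cosets via local data agreeing at almost all places (following \cite[Lemma 4.2]{ChanOh13} and \cite[Lemma 1.2]{LSun}) and confirming that twisting the shift by $s$ is compatible with this gluing; the only nontrivial places are the finitely many dividing $a$, where one checks $s\Sigma_p(\nu)\equiv s\mu$ modulo $aM_p$ precisely as above.
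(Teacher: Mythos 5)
Your proposal is correct and follows essentially the same route as the paper: both rest on the elementary observation that multiplying the shift by $s$ (and undoing it with an inverse $\bar{s}$ of $s$ modulo $a$) preserves membership in $aM_p$ and commutes with the local and ad\`elic isometry actions, the paper carrying this out coset-by-coset while you package it as an equivariant bijection $\Phi_s$. The only cosmetic difference is in part (1), where you invert directly via $ss'=1+ka$ whereas the paper iterates the forward inclusion up to the order of $s$ modulo $a$; both are valid.
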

\begin{proof}
	(1) Let $\sigma\in O^+(aL+\nu)$. Then $\sigma(aL)=aL$ and $\sigma(\nu)-\nu\in aL$. Multiplying by $s$, we have  $\sigma(s\nu)-s\nu\in saL\subseteq aL$, hence $\sigma\in O^+(aL+s\nu)$.
	Likewise, we have $ O^+(aL+s^{k-1}\nu)\subseteq O^+(aL+s^k\nu)$ for any $k\in\N$. Note that $aL+\nu=aL+s^{\ord_a(s)}\nu$, where $\ord_a(s)\ge 1$ is the order of $s$ modulo $a$ in the multiplicative group $(\Z/a\Z)^\times$. Therefore, we have
	$$
	O^+(aL+\nu)\subseteq O^+(aL+s\nu)\subseteq O^+(aL+s^{\ord_a(s)}\nu) = O^+(aL+\nu),
	$$
	which proves the first statement. The equalities for local cosets follow in the same manner.
	
	(2) Noting that any coset in $\pgen(aL+s\nu)$ has conductor $a$, let $aK+\mu\in \pgen(aL+s\nu)$ be any coset in the proper genus of $aL+s\nu$.
	Then for any prime $p$, there exists $\sigma_p\in O^+(V_p)$ such that $\sigma_p(aK)=aL$ and $\sigma_p(\mu)-s\nu \in aL_p$.
	Let $\bar{s}$ be an integer which is an inverse of $s$ modulo $a$. Then $\sigma_p(\bar{s}\mu)-\bar{s}s\nu \in \bar{s}aL_p\subseteq aL_p$. Since $\bar{s}s\nu - \nu \in aL_p$, we have 
	$$
	\sigma_p(\bar{s}\mu)-\nu = \sigma_p(\bar{s}\mu)-\bar{s}s\nu + \bar{s}s\nu - \nu \in aL_p.
	$$
	Hence $\sigma_p (aK+\bar{s}\mu)=aL_p+\nu$ for any prime $p$, that is, $aK+\bar{s}\mu \in \pgen(aL+\nu)$.
	Therefore, $\sigma(aK+\bar{s}\mu)=aL_i+\nu_i$ for some $\sigma\in O^+(V)$ and $1\le i \le h_{\operatorname{gen}}$.
	One may easily show that this $\sigma\in O^+(V)$ satisfies $\sigma(aK+\mu)=aL_i+s\nu_i$.
	This proves the second statement.
	
	(3) The third statement may also be proved similarly as the proof of the second statement.
\end{proof}

\subsection{$p$-neighborhood of lattice cosets}\label{subsection-p-nbd-cosets}

Let $p$ be a prime number such that $p\nmid 4N_La^2$. Let $\bar{p}$ be an integer which is an inverse of $p$ modulo $a$. 
Define $R_p(aL+\nu)$ to be the set of cosets $aK+\mu$ with conductor $a$ satisfying the following:
\begin{enumerate}[label={\rm(\arabic*)}]
	\item $aK_q+\mu=aL_q+\bar{p}\nu$ for any prime $q\neq p$.
	\item $(L_p:L_p\cap K_p)=(K_p:L_p\cap K_p)=p$ and $Q(K_p)\Z_p=\Z_p$.
\end{enumerate}
From the second condition, $K_p$ is also a $\Z_p$-maximal lattice, hence $K_p$ is isometric to $L_p$ by an element in $O^+(V_p)$, due to the uniqueness of a $\Z_p$-maximal lattice up to isometry. Furthermore, by the local theory of lattices (cf. \cite[82:23]{OMBook}), there exists a basis $\{e_1,e_2,e_3\}$ of $L_p$ such that 
\begin{multline}\label{eqn:Kpbasis}
Q(e_1)=-d_L,\ Q(e_2)=Q(e_3)=0, B(e_1,e_2)=B(e_1,e_3)=0,\ B(e_2,e_3)=1,\\
 \text{ and } \{e_1,p^{-1}e_2,pe_3\} \text{ is a basis of } K_p.
\end{multline}
Noting that $aK_p+\mu=K_p+\mu=K_p$, $aL_p+\bar{p}\nu=L_p+\nu=L_p$, and $pK_p\subseteq L_p$, we have 
$$
aK+\mu\in \pgen(aL+\bar{p}\nu)\quad \text{and} \quad p(aK+\mu)\subseteq aL+\nu
$$ 
for any $aK+\mu\in R_p(aL+\nu)$ (for further details, see \eqref{eqn-neighborhood-inclusion} below). Hence for $n\in\N$, one may note that $r(p^2n,aL+\nu)>0$ if $r(n,aK+\mu)>0$ for some $aK+\mu \in R_p(aL+\nu)$.

For an $n\in \N$ and an $x\in R(p^2n,aL+\nu)$, we define
$$
\pi_p(x,aL+\nu) = |\{aK+\mu \in R_p(aL+\nu) : x\in pK_p\}|.
$$
These numbers for special types of lattices were considered in \cite{Ponomarev}, and were computed by means of quaternion orders.
The following lemma provide some properties about what we have just defined.
\begin{lemma}\label{lem-p-nbd-properties}
Let $aL+ \nu$ be a ternary coset with conductor $a$, $p$ a prime number such that $p\nmid 4N_La^2$, $x\in R(p^2n,aL+\nu)$ and $d_L$ the discriminant of $L$. 
Under the notations given above, we have the following.
\begin{enumerate}[leftmargin=*, label={\rm(\arabic*)}]
	\item For any $k\in\Z_{\ge 0}$, $aK+\mu \in R_p(aL+\nu)$ if and only if $aK+\bar{p}^{k}\mu \in R_p(aL+\bar{p}^k\nu)$.
	\item $|R_p(aL+\nu)|=p+1$.
	\item $\pi_p(x,aL+\nu)=\begin{cases}
		1 & \text{if } x \in L_p\setminus pL_p,\\
		1+\legendre{-d_Ln}{p}& \text{if } x \in pL_p\setminus p^2L_p,\\
		p+1 & \text{if } x\in p^2L_p.
	\end{cases}$
\end{enumerate}
\end{lemma}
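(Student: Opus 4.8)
My plan is to reduce all three statements to $\F_p$-linear algebra on the reduction $L_p/pL_p$, which carries a non-degenerate ternary quadratic form $\bar Q$ of determinant $d_L$ modulo squares, together with the standard correspondence between the cosets in $R_p(aL+\nu)$ and the isotropic $\F_p$-lines in $L_p/pL_p$. For part (1) I would argue directly from the definition. Condition (2) in the definition of $R_p$ depends only on the lattices $L_p,K_p$ and not on the shift, so it is untouched by replacing $\nu,\mu$ with $\bar p^k\nu,\bar p^k\mu$. For condition (1), note that at every prime $q\neq p$ the equality $aK_q+\mu=aL_q+\bar p\nu$ forces $K_q=L_q$ and $\mu-\bar p\nu\in aL_q$; multiplying the latter membership by the integer $\bar p^k$ and using that $aL_q$ is a $\Z_p$-module (hence closed under multiplication by $\bar p^k\in\Z_q$) gives $\bar p^k\mu-\bar p^{k+1}\nu\in aL_q$, which is exactly condition (1) for $aK+\bar p^k\mu\in R_p(aL+\bar p^k\nu)$. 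The reverse implication follows by iterating the forward one, since $\bar p$ has finite multiplicative order modulo $a$.

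For part (2) I would invoke the local theory of $p$-neighbors: the sublattices $K_p$ with $(L_p:L_p\cap K_p)=(K_p:L_p\cap K_p)=p$ and $Q(K_p)\Z_p=\Z_p$ are in bijection with the isotropic lines of $L_p/pL_p$, the neighbor attached to an isotropic line $\F_p\bar v$ being $K_p=\Z_p\frac vp+L_p(v)$ with $L_p(v)=\{w\in L_p:B(v,w)\in p\Z_p\}$ (the basis \eqref{eqn:Kpbasis} is the instance $\bar v=\bar e_2$). Since $L_p/pL_p$ is a non-degenerate ternary $\F_p$-space it is isotropic, and its projective zero locus is a smooth conic with exactly $p+1$ rational points, so there are $p+1$ isotropic lines. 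Each local neighbor $K_p$, together with $K_q=L_q$ for $q\neq p$ and the shift fixed modulo the lattice, determines a unique global coset $aK+\mu$ of conductor $a$ via the correspondence between lattices and their localizations; hence $|R_p(aL+\nu)|=p+1$.

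For part (3) I would translate the condition $x\in pK_p$ through this correspondence, the three cases being governed by the $p$-adic valuation of $x$ in $L_p$. If $x\in L_p\setminus pL_p$, then $\bar x$ is a nonzero isotropic vector since $Q(x)=p^2n\equiv 0\pmod p$, and $x\in pK_p=\Z_p v+pL_p(v)$ forces $\F_p\bar v=\F_p\bar x$; one checks that the single neighbor with this line does contain $x$, so $\pi_p=1$. If $x\in p^2L_p$, then $p^2L_p\subseteq pL_p(v)\subseteq pK_p$ for every neighbor, giving $\pi_p=p+1$. The crux is the middle case $x=p\tilde x$ with $\tilde x\in L_p\setminus pL_p$ and $Q(\tilde x)=n$: here $x\in pK_p$ is equivalent to $\tilde x\in K_p\cap L_p=L_p(v)$, i.e. $B(v,\tilde x)\in p\Z_p$, so $\pi_p$ counts the isotropic lines $\F_p\bar v$ orthogonal to $\bar{\tilde x}$. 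When $p\nmid n$ the vector $\bar{\tilde x}$ is anisotropic and $\bar{\tilde x}^{\perp}$ is a non-degenerate binary space whose determinant is $\equiv d_L n$ modulo squares, hence split with two isotropic lines or anisotropic with none according as $\legendre{-d_L n}{p}=1$ or $-1$; when $p\mid n$ the vector $\bar{\tilde x}$ is isotropic, lies in the radical of $\bar{\tilde x}^{\perp}$, and a direct check leaves exactly its own line isotropic. In every subcase this equals $1+\legendre{-d_L n}{p}$, as claimed.

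The main obstacle is this middle case: correctly identifying $x\in pK_p$ with the orthogonality condition $B(v,\tilde x)\equiv 0\pmod p$, which relies on the refined normalization $Q(v)\equiv 0\pmod{p^2}$ of the neighbor vectors, and then pinning down the determinant of the binary space $\bar{\tilde x}^{\perp}$ so that the count of its isotropic lines is governed precisely by $\legendre{-d_L n}{p}$, including the degenerate subcase $p\mid n$. The remaining cases and parts (1)--(2) are comparatively routine once the reduction to $L_p/pL_p$ and the isotropic-line dictionary are in place.
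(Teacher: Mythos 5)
Your proposal is correct, and it reaches all three conclusions by a genuinely different route from the paper. For (1) you argue essentially as the paper does (modulo the slip that $aL_q$ is a $\Z_q$-module, not a $\Z_p$-module), but for (2) and (3) the paper works entirely with explicit sublattices: it enumerates the candidates $pK_p\subseteq L_p$ with elementary divisors $1,p,p^2$ and norm $p^2\Z_p$ via lower-triangular (Hermite-type) coset representatives of $\GL_3(\Z_p)\operatorname{diag}(1,p,p^2)\GL_3(\Z_p)$, obtaining the list $M_{1,u,v},M_2,M_3$ and the count $p-1+2=p+1$; for (3) it then uses transitivity of $O(L_p)$ on vectors of a given norm (Witt's theorem in the form of \cite[Theorem 5.4.1]{KiBook}) to replace $x$ by a convenient representative $x'$ and checks by hand which of $M_{1,u,v},M_2,M_3$ contain $x'$, the congruences \eqref{condition-u,v,w} producing the $1+\legendre{-d_Ln}{p}$. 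You instead pass to the reduction $L_p/pL_p$ and use the classical dictionary between $p$-neighbours and isotropic lines, so that (2) becomes the count of $\F_p$-points on a smooth conic and (3) becomes a count of isotropic lines in $\bar{\tilde x}^{\perp}$, with the determinant computation $\det(\bar{\tilde x}^{\perp})\equiv d_Ln$ explaining the symbol $\legendre{-d_Ln}{p}$ conceptually (and the degenerate subcase $p\mid n$ handled by the radical). Your identification of $x\in pK_p$ with $B(v,\tilde x)\equiv 0\pmod p$ is correct and does use the normalization $Q(v)\equiv 0\pmod{p^2}$ exactly as you flag; the one thing you take on faith that the paper proves from scratch is the bijection between neighbours and isotropic lines itself, which is standard for $p\nmid 2d_L$ but should be cited or verified (surjectivity in particular). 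The trade-off: the paper's computation is self-contained and produces the explicit bases that are convenient elsewhere (e.g.\ the chain construction in the proof of Theorem \ref{thm-Theta-same-spn}), while your argument is shorter, avoids the case analysis over $u,v,w$, and makes the three values of $\pi_p$ transparent as point counts on plane conics.
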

\begin{proof}
(1) Let $k\in\Z_{\ge 0}$. For any prime $q\neq p$, note that $aK_q+\bar{p}^k\mu=aL_q+\bar{p}^{k+1}\nu$ if and only if
$$
K_q = L_q \text{ and } \bar{p}^k\mu-\bar{p}^{k+1}\nu \in aL_q
$$
If $(a,q)=1$, then $aK_q+\bar{p}^k\mu = K_q + \bar{p}^k\mu = K_q = L_q = aL_q+\bar{p}^{k+1}\nu$.
Otherwise, we have $q\mid a$ so that $(\bar{p},q)=1$. Hence $\mu- \bar{p}\nu \in aL_q$ if and only if $\bar{p}^k\mu- \bar{p}^{k+1}\nu \in aL_q$.
Therefore, from the definition of the set $R_p(aL+\bar{p}^{k}\nu)$, $aK+\mu \in R_p(aL+\nu)$ if and only if $aK+\bar{p}^{k}\mu \in R_p(aL+\bar{p}^{k+1}\nu)$.
\vspace{.05in}

\noindent
(2) Note that two cosets are equal if and only if they are locally equal at all prime spots. Hence by the definition of the set $R_p(aL+\nu)$, we need only to investigate how many distinct $\Z_p$-lattices $K_p$ satisfy $(L_p:L_p\cap K_p)=(K_p:L_p\cap K_p)=p$ and $Q(K_p)\Z_p=\Z_p$. Putting $M=pK_p$ and recalling \eqref{eqn:Kpbasis}, this is equivalent to finding all sublattices $M$ of $L_p$ with elementary divisors (also called invariant factors) $1,p,p^2$ such that $Q(M)\Z_p=p^2\Z_p$.

To find possible sublattices $M$ of $L_p$, we fix a basis $\{e_1,e_2,e_3\}$ of $L_p$ such that 
$$
Q(e_1)=-d_L,\ Q(e_2)=Q(e_3)=0,\ B(e_1,e_2)=B(e_1,e_3)=0,\ \text{and } B(e_2,e_3)=1, 
$$
which is known to exist by the local theory of lattices. 

We then choose three $\Z_p$-linearly-independent elements 
\[
a_1=a_{11}e_1+a_{21}e_2+a_{31}e_3,\ a_2=a_{12}e_1+a_{22}e_2+a_{32}e_3,\ a_3=a_{13}e_1+a_{23}e_2+a_{33}e_3
\]
of $L_p$ that generate sublattices with the desired properties. Note that if we correspond any sublattice $\Z_pa_1+\Z_pa_2+\Z_pa_3$ of $L_p$ to the matrix $A=(a_{ij})\in M_3(\Z_p)$ with $\det(A)\neq 0$, then the sublattices of $L_p$ with invariant factors $1,p,p^2$ correspond to the left cosets of the double cosets
$$
GL_3(\Z_p)\cdot \diag(1,p,p^2)\cdot GL_3(\Z_p)/GL_3(\Z_p).
$$
Moreover, every left coset from the above contains exactly one element in the set 
$$
\begin{aligned}
\mathcal{C}=\{C=(c_{ij})\in M_3(\Z) : c_{ii}=p^{k_i}\text{ for some }&k_i\in\Z_{\ge 0},\ c_{11}c_{22}c_{33}=p^3,\\ 
&c_{ij}=0 \text{ if }i<j, \text{ and } 0\le c_{ij}< c_{ii} \text{ if } j\le i \}
\end{aligned}
$$
of lower-triangular matrices of determinant $p^3$. Therefore, if we search for the matrices in $\mathcal{C}$ whose corresponding sublattice has norm $p^2\Z_p$, and check whether these indeed have invariant factors $1,p,p^2$ in $L_p$, one may conclude that our $pK_p$ is one of the following:
\begin{equation}\label{sublattices-of-Lp}
\begin{aligned}
	&M_{1,u,v}=\Z_p(e_1+ue_2+ve_3)+\Z_p(pe_2+pwe_3)+\Z_p(p^2e_3),\\
	&M_2=\Z_p pe_1+\Z_p e_2 + \Z_p p^2e_3, \quad \text{ and } \quad M_3=\Z_p pe_1+\Z_p p^2e_2 + \Z_p e_3,
\end{aligned}
\end{equation}
where $0< u,w <p$ and $0<v<p^2$ are integers satisfying 
\begin{equation}\label{condition-u,v,w}
-d_L+2uv\equiv 0\pmod{p^2} \quad \text{ and } \quad v+uw\equiv 0\pmod{p}.
\end{equation}
Note that $u,v,w$ are not divisible by $p$, and that if $0<u<p$ is determined, there is only one choice for $0<v<p^2$, and hence $0<w<p$ is also determined. Therefore, there are $p-1+2=p+1$ cosets $aK+\mu$ in $R_p(aL+\nu)$, namely, the cosets $aK+\mu\in R_p(aL+\nu)$ such that $pK_p$ is one of the $\Z_p$ sublattices $M_{1,u,v}$, $M_2$, or $M_3$ of $L_p$.
\vspace{.05in}

\noindent
(3) From the definition of $\pi_p(x,aL+\nu)$, it suffices to count the number of sublattices of $L_p$ in \eqref{sublattices-of-Lp} containing a given $x \in R(p^2n,aL+\nu)$.
Since any sublattice in \eqref{sublattices-of-Lp} contains $p^2L_p$, we have $\pi_p(x,aL+\nu)=p+1$ if $x\in p^2L_p$.
On the other hand, note that for a $\sigma_p\in O(L_p)$, we have
$$
\begin{aligned}
	(L_p:L_p\cap \sigma_p(K_p))&=(\sigma_p(L_p):\sigma_p(L_p)\cap\sigma_p(K_p))=(L_p:L_p\cap K_p),\\
	(\sigma_p(K_p):L_p\cap \sigma_p(K_p))&=(\sigma_p(K_p):\sigma_p(L_p)\cap\sigma_p(K_p))=(K_p:L_p\cap K_p).
\end{aligned}
$$
Hence, if there is a vector $x'\in L_p$ with $Q(x)=Q(x')$ and a $\sigma_p\in O(L_p)$ (not necessarily a rotation) such that $x'=\sigma_px$, then
$$
\begin{aligned}
	\pi_p(x,aL+\nu)&=|\{aK+\mu \in R_p(aL+\nu) : x\in pK_p\}|\\
	&=|\{aK+\mu \in R_p(aL+\nu) : \sigma_p x\in p\cdot\sigma_p(K_p)\}|\\
  	&=|\{aK'+\mu' \in R_p(aL+\nu) : x'\in pK_p')\}|,
\end{aligned}
$$
where $aK'+\mu'$ is the coset in $R_p(aL+\nu)$ such that $aK_p'+\mu'=K'_p=\sigma_p(K_p)$ and $aK_q'+\mu'=aK_q+\mu$ for any prime $q\neq p$.

If $x \in L_p\setminus pL_p$, then $x':=e_2+\frac{p^2n}{2}e_3\in L_p\setminus pL_p$ satisfies $Q(x)=Q(x')$. Hence by \cite[Theorem 5.4.1]{KiBook}, there exists a $\sigma_p\in O(L_p)$ such that $\sigma_p x = x'$.
Moreover, among the sublattices of $L_p$ in \eqref{sublattices-of-Lp}, only $M_2$ contains $x'$. Hence, $\pi_p(x,aL+\nu)=1$.

Now we consider the case when $x\in pL_p\setminus p^2L_p$.
First, assume that $\legendre{-d_Ln}{p}=1$. Then $n=-\varepsilon^2d_L$ for some $\varepsilon\in\Z_p^\times$.
Hence $x'=p\varepsilon e_1$ satisfies $Q(x')=-p^2\varepsilon^2d_L=p^2n=Q(x)$ and $x/p,x'/p\in L_p\setminus pL_p$.
Again by \cite[Theorem 5.4.1]{KiBook}, there exists  a $\sigma_p\in O(L_p)$ such that $\sigma_p (x/p) = x'/p$, hence $\sigma_p x = x'$.
It is clear that both $M_2$ and $M_3$ contain $x'$.
Assume that a sublattice $M_{1,u,v}$ contains $x'=p\varepsilon e_1$. Then
$$
p\varepsilon e_1 = p\varepsilon(e_1+ue_2+ve_3) + b(pe_2+pwe_3)+c(p^2 e_3)
$$
for some $b,c\in\Z_p$, so that $b=-\varepsilon u$ and $\varepsilon(v-wu)=-cp$. Hence $v\equiv uw \pmod{p}$. However, by \eqref{condition-u,v,w}, we have $2v\equiv v+uw\equiv 0\pmod{p}$, which is a contradiction. Hence 
\[
\pi_p(x,aL+\nu)=2=1+\legendre{-d_Ln}{p}.
\]
In the case when $\legendre{-d_Ln}{p}=-1$ or $0$, one may argue in the same way to show that $\pi_p(x,aL+\nu)=0$ or $1$, respectively, by taking $x'=pe_2+\frac{pn}{2}e_3$; the details are left to the interested reader. This completes the proof of the lemma.
\end{proof}

\section{Hecke Operators on the theta series}\label{section-Hecke-on-theta}

In this section, we discuss how the action of the Hecke operators on the theta series of cosets is related to its $p$-neighborhood ($p\nmid 4N_La^2$).  
For two cosets $aL+\nu$ and $aM+\xi$, we put 
$$
c_p(aL+\nu,aM+\xi)= |\{aK+\mu\in R_p(aL+\nu) : aK+\mu \in \pcls(aM+\xi)  \}|.
$$
Let $aK+\mu\in R_p(aL+\nu)$ and let $\{aL_i+\nu_i\}_{1\le i \le h}$ be a set of representatives of proper classes of $\pgen(aL+\nu)$.
Since $R_p(aL+\nu)\subseteq \pgen(aL+\bar{p}\nu)$, Lemma \ref{lem-genus-structure} implies that $aK+\mu \in \pcls(aL_j+\bar{p}\nu_j)$ for some $1\le j \le h$.
Moreover, if $\sigma(aK+\mu)=aL_j+\bar{p}\nu_j$ for some $\sigma \in O^+(V)$, then $\sigma(aK+\bar{p}^k\mu)=aL_j+\bar{p}^{k+1}\nu_j$ for any $k\in\Z_{\ge 0}$.
Therefore, together with Lemma \ref{lem-p-nbd-properties} (1), we have 
$$
c_p(aL+\nu, aL_j+\bar{p}\nu_j) = c_p(aL+\bar{p}^{k}\nu, aL_j+\bar{p}^{k+1}\nu_j)
$$
for any $k\in\Z_{\ge 0}$. 
Hence for $1\le i,j \le h$, the following are defined independent of $k\in\Z_{\ge 0}$:
$$
\pi_{ij}(p^2):=c_p(aL_i+\bar{p}^k\nu_i, aL_j+\bar{p}^{k+1}\nu_j)=c_p(aL_i+\nu_i, aL_j+\bar{p}\nu_j).
$$
 
For any $k\in\Z_{\ge 0}$, put
$$
\T{G_k}(z):=\begin{bmatrix}
	\T{aL_1+\bar{p}^k\nu_1}(z)\\
	\vdots\\
	\T{aL_h+\bar{p}^k\nu_h}(z)
\end{bmatrix}.
$$
We now describe a generalization of the Eichler's commutation relation for cosets as follows.
\begin{theorem}\label{thm-Hecke-nbd}
	Let $aL+\nu$ be a ternary coset with conductor $a$, and let $p$ be a prime number such that $p\nmid 4N_La^2$. We have
	\begin{equation}\label{eqn-Hecke-nbd}
	(\T{aL+\nu}|T(p^2))(z)=\sum\limits_{aK+\mu \in R_p(aL+\nu)} \T{aK+\mu}(z).		
	\end{equation}
	Furthermore, for any $k\in\Z_{\ge 0}$, we have
	$$
		\T{G_k} | T(p^2) = (\pi_{ij}(p^2))\T{G_{k+1}}.
	$$
	
\end{theorem}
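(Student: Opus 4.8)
The plan is to establish the scalar identity \eqref{eqn-Hecke-nbd} first by comparing Fourier coefficients, and then to bootstrap it to the matrix identity by grouping neighbours into proper classes. For the first identity, write $(\T{aL+\nu}|T(p^2))(z)=\sum_{n\ge 0}b(n)q^n$ and invoke Theorem \ref{thm-Hecke-actonthetaofcoset}. In the ternary case $k=3$ we have $\lambda=1$, and since $p\nmid 4N_La^2$ forces $p\nmid d_L$ (as $d_L\mid N_L^3$), the factor $\legendre{4d_L}{p^2}$ equals $1$ and $\legendre{-1}{p}\legendre{4d_Ln}{p}=\legendre{-d_Ln}{p}$. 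Hence
\[
b(n)=r(p^2n,aL+\nu)+\legendre{-d_Ln}{p}\,r(n,aL+\bar p\nu)+p\,r(n/p^2,aL+\bar p^2\nu).
\]
On the other side, the $n$-th coefficient of the right-hand sum is $\sum_{aK+\mu\in R_p(aL+\nu)}r(n,aK+\mu)$, so it suffices to match these two expressions for every $n\ge 0$.

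The key step is a change of variables turning the sum over neighbours into a weighted count over $R(p^2n,aL+\nu)$. Since $p(aK+\mu)\subseteq aL+\nu$ for each neighbour, the map $x\mapsto px$ sends a vector $x\in aK+\mu$ with $Q(x)=n$ to a vector $y=px\in R(p^2n,aL+\nu)$ lying in $pK_p$ locally at $p$; conversely, using that $aK_q+\mu=aL_q+\nu$ at every $q\ne p$ (so that dividing by $p$ stays in $aK+\mu$) one checks this is a bijection onto the pairs $(aK+\mu,y)$ with $y\in pK_p$. Therefore $\sum_{aK+\mu}r(n,aK+\mu)=\sum_{y\in R(p^2n,aL+\nu)}\pi_p(y,aL+\nu)$. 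I would then split $R(p^2n,aL+\nu)$ into the three regions $L_p\setminus pL_p$, $pL_p\setminus p^2L_p$, $p^2L_p$ and plug in the values of $\pi_p$ from Lemma \ref{lem-p-nbd-properties}(3); writing $N_0,N_1,N_2$ for the sizes of the three regions, the weighted count becomes $(N_0+N_1+N_2)+\legendre{-d_Ln}{p}N_1+pN_2=r(p^2n,aL+\nu)+\legendre{-d_Ln}{p}N_1+pN_2$. Finally, $y\mapsto y/p$ and $y\mapsto y/p^2$ identify $N_1$ with $\#\{z\in R(n,aL+\bar p\nu):z\in L_p\setminus pL_p\}$ and $N_2$ with $r(n/p^2,aL+\bar p^2\nu)$, matching the two middle and last terms of $b(n)$; the mild subtlety that $N_1$ equals $r(n,aL+\bar p\nu)$ only when $p\nmid n$ is harmless, because $p\mid n$ forces $\legendre{-d_Ln}{p}=0$ and kills that term anyway.

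For the matrix identity I would apply the scalar relation \eqref{eqn-Hecke-nbd} to each representative $aL_i+\bar p^k\nu_i$, which is legitimate since these cosets have conductor $a$ and the genus invariance of the level gives $N_{L_i}=N_L$, $d_{L_i}=d_L$, so $p\nmid 4N_{L_i}a^2$. This yields $\T{aL_i+\bar p^k\nu_i}|T(p^2)=\sum_{aK+\mu\in R_p(aL_i+\bar p^k\nu_i)}\T{aK+\mu}$. Because $R_p(aL_i+\bar p^k\nu_i)\subseteq\pgen(aL_i+\bar p^{k+1}\nu_i)$ and Lemma \ref{lem-genus-structure}(2) shows that $\{aL_j+\bar p^{k+1}\nu_j\}_j$ represents the proper classes of that genus, every neighbour lies in exactly one $\pcls(aL_j+\bar p^{k+1}\nu_j)$; and since an isometry of cosets preserves representation numbers, all neighbours in a given class share the theta series $\T{aL_j+\bar p^{k+1}\nu_j}$. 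Collecting terms by class turns the sum into $\sum_j c_p(aL_i+\bar p^k\nu_i,aL_j+\bar p^{k+1}\nu_j)\,\T{aL_j+\bar p^{k+1}\nu_j}=\sum_j\pi_{ij}(p^2)\T{aL_j+\bar p^{k+1}\nu_j}$, which is precisely the $i$-th entry of $(\pi_{ij}(p^2))\T{G_{k+1}}$. The main obstacle is the local bookkeeping in the second paragraph: verifying the bijection onto $\{y\in pK_p\}$ and correctly matching the three $\pi_p$-regions with the representation numbers $r(n,aL+\bar p\nu)$ and $r(n/p^2,aL+\bar p^2\nu)$, especially in the degenerate case $p\mid n$.
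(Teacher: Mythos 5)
Your proposal is correct and follows essentially the same route as the paper: both reduce \eqref{eqn-Hecke-nbd} via Theorem \ref{thm-Hecke-actonthetaofcoset} to a coefficient identity, prove it by double-counting $\sum_{x\in R(p^2n,aL+\nu)}\pi_p(x,aL+\nu)$ — once via the inclusion $p(aK+\mu)\subseteq aL+\nu$ and once via the three-region case split of Lemma \ref{lem-p-nbd-properties}(3), with the same observation that $p\mid n$ forces the Legendre symbol to vanish — and then obtain the matrix identity by grouping neighbours into proper classes exactly as in the discussion preceding the theorem.
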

\begin{proof}
	According to the discussion above, the furthermore part of the theorem follows immediately once we prove \eqref{eqn-Hecke-nbd}.
	Hence it suffices to show \eqref{eqn-Hecke-nbd}, that is, by Theorem \ref{thm-Hecke-actonthetaofcoset}, for any $n\in\N$,
	\begin{equation}\label{eqn-Hecke-nbd-coeff}
	r(p^2n, aL+\nu) + \legendre{-4d_Ln}{p} r(n,aL+\bar{p}\nu) + p\cdot r(n/p^2,aL+\bar{p}^2\nu) = \sum\limits_{aK+\mu \in R_p(aL+\nu)} r(n,aK+\mu).
	\end{equation}
	
	To show \eqref{eqn-Hecke-nbd-coeff}, we will count the sum $\sum\limits_{x\in R(p^2n, aL+\nu)} \pi_p (x,aL+\nu)$ in two different ways.
	First, note that for any $aK+\mu \in R_p(aL+\nu)$, we have 
	\begin{equation}\label{eqn-neighborhood-inclusion}
	\begin{cases}
	p(aK+\mu)_q=p(aL+\bar{p}\nu)_q=aL_q+\bar{p}p\nu=aL_q+\nu \text{ for any prime } q\neq p,	\\
	p(aK+\mu)_p=pK_p \subseteq L_p=(aL+\nu)_p.
	\end{cases}
	\end{equation}
	Hence, $x\in R(p^2n, aL+\nu)$ with $x\in pK_p$ if and only if $x\in p(aK+\mu)$ with $Q(x)=p^2n$. Thus,
	\begin{equation}\label{eqn-Hecke-nbd-proof:1}
	\begin{aligned}
		\sum\limits_{x\in R(p^2n, aL+\nu)} \pi_p (x,aL+\nu) &= \sum\limits_{x\in R(p^2n, aL+\nu)}\sum\limits_{aK+\mu \in R_p(aL+\nu) \atop x\in pK_p} 1&	\\
		&= \sum\limits_{aK+\mu \in R_p(aL+\nu)}\sum\limits_{ x \in p(aK+\mu) \atop Q(x)=p^2n} 1 &= 	\sum\limits_{aK+\mu \in R_p(aL+\nu)} r(n,aK+\mu),
	\end{aligned}
	\end{equation}
	and the last term is equal to the the right-hand side of \eqref{eqn-Hecke-nbd-coeff}.

	On the other hand, by Lemma \ref{lem-p-nbd-properties} (3), the sum $\sum\limits_{x\in R(p^2n, aL+\nu)} \pi_p (x,aL+\nu)$ is equal to 
	\begin{equation}\label{eqn-Hecke-nbd-proof:2}
	\begin{aligned}
		 &\sum\limits_{x\in L_p\setminus pL_p} \pi_p (x,aL+\nu)+\sum\limits_{x\in pL_p\setminus p^2L_p} \pi_p (x,aL+\nu) +\sum\limits_{x\in p^2L_p} \pi_p (x,aL+\nu)\\
		=&\sum\limits_{x\in L_p\setminus pL_p} 1 + \sum\limits_{x\in pL_p\setminus p^2L_p} \left(1+\legendre{-d_Ln}{p}\right) + \sum\limits_{x\in p^2L_p} (1+p)\\
		=&\left[\sum\limits_{x\in R(p^2n, aL+\nu)} 1 \right] + \legendre{-4d_Ln}{p}\cdot\left[\sum\limits_{x\in pL_p\setminus p^2L_p} 1 \right]+ p\cdot \left[\sum\limits_{x\in p^2L_p} 1\right],
	\end{aligned}
	\end{equation}
where we omit the condition $Q(x)=p^2n$ in the intermediary sums for ease of notation. Note that if $p\mid n$, then $\legendre{-4d_Ln}{p}=0$, and if $p\nmid n$, then $x/p \in L_p$ if and only if $x/p \in L_p\setminus pL_p$ since $Q(x/p)=n$ and $Q(L_p)\subseteq\Z_p$.
	Moreover, $x\in aL+\nu$ with $x\in p^k L_p$ if and only if $x/p^k \in aL+\bar{p}^k \nu$ for any $k\in \Z_{\ge 0}$.
	Hence the last equation of \eqref{eqn-Hecke-nbd-proof:2} is equal to
	$$
		r(p^2n, aL+\nu) + \legendre{-4d_Ln}{p} r(n,aL+\bar{p}\nu) + pr(n/p^2,aL+\bar{p}^2\nu),
	$$
	which is equal to the the left-hand side of \eqref{eqn-Hecke-nbd-coeff}. 
	This completes the proof of the theorem.
\end{proof}

We next use the above theorem to investigate the first piece in the splitting \eqref{eqn-splitting-of-thetaofcosets}, the Eisenstein series part of the theta series of a coset $aL+\nu$. Specifically, we evaluate the action of the Hecke operators $T(p^2)$ on the theta series $\T{\pgen(aL+\nu)}(z)$ for the genus. 

\begin{theorem}	\label{thm-Hecke-genera}
	Let $aL+\nu$ be a ternary coset with conductor $a$, and let $p$ be a prime number such that $p\nmid 4N_La^2$. Then, we have
	\begin{gather*}
	(\T{\pgen(aL+\nu)}|T(p^2))(z)=(p+1)\T{\pgen(aL+\bar{p}\nu)}(z).
	\end{gather*}
\end{theorem}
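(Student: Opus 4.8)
The plan is to apply the Hecke operator termwise to the weighted average defining $\T{\pgen(aL+\nu)}$ and then reorganize the resulting sum over $p$-neighbors according to the proper class in which each neighbor lies. Write $\{aL_i+\nu_i\}_{1\le i\le h}$ for a set of representatives of the proper classes in $\pgen(aL+\nu)$ and abbreviate $o^+_i:=o^+(aL_i+\nu_i)$, so that \eqref{defn-thetaofpropergenus} reads $\T{\pgen(aL+\nu)}=\frac{1}{\text{Mass}(\pgen(aL+\nu))}\sum_i (o^+_i)^{-1}\T{aL_i+\nu_i}$. First I would apply $T(p^2)$ to each summand and invoke Theorem \ref{thm-Hecke-nbd} in the form \eqref{eqn-Hecke-nbd}, giving $\T{aL_i+\nu_i}|T(p^2)=\sum_{aK+\mu\in R_p(aL_i+\nu_i)}\T{aK+\mu}$. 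Since every neighbor lies in $\pgen(aL+\bar{p}\nu)$ and the theta series is a proper-class invariant, I would group the neighbors by the class $\pcls(aL_j+\bar{p}\nu_j)$ containing them, so that the coefficient of $\T{aL_j+\bar{p}\nu_j}$ becomes $\sum_i (o^+_i)^{-1}\pi_{ij}(p^2)$.

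Next I would identify the target. By Lemma \ref{lem-genus-structure}(2) the cosets $aL_j+\bar{p}\nu_j$ represent the proper classes of $\pgen(aL+\bar{p}\nu)$, and by Lemma \ref{lem-genus-structure}(1) they satisfy $o^+(aL_j+\bar{p}\nu_j)=o^+_j$; hence $\text{Mass}(\pgen(aL+\bar{p}\nu))=\text{Mass}(\pgen(aL+\nu))$ and $\T{\pgen(aL+\bar{p}\nu)}=\frac{1}{\text{Mass}(\pgen(aL+\nu))}\sum_j (o^+_j)^{-1}\T{aL_j+\bar{p}\nu_j}$. Comparing the two expressions, the theorem reduces to the single weighted identity
\[
\sum_{i}\frac{\pi_{ij}(p^2)}{o^+_i}=\frac{p+1}{o^+_j}\qquad\text{for each }j,
\]
that is, to the statement that $\bigl((o^+_i)^{-1}\bigr)_i$ is a left eigenvector of the neighbor matrix $\bigl(\pi_{ij}(p^2)\bigr)$ with eigenvalue $p+1$. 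No linear independence of the $\T{aL_j+\bar{p}\nu_j}$ is needed, since I would prove this identity separately for each fixed $j$.

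To prove the identity I would establish the weighted symmetry $(o^+_i)^{-1}\pi_{ij}(p^2)=(o^+_j)^{-1}\pi_{ji}(p^2)$ and combine it with the row sum $\sum_i\pi_{ji}(p^2)=|R_p(aL_j+\nu_j)|=p+1$ from Lemma \ref{lem-p-nbd-properties}(2); together these give $\sum_i (o^+_i)^{-1}\pi_{ij}=\sum_i (o^+_j)^{-1}\pi_{ji}=(o^+_j)^{-1}(p+1)$, as required. The symmetry is the content of a generalized Eichler relation. Its proof rests on the reciprocity of the $p$-neighbor relation: from the basis in \eqref{eqn:Kpbasis}, taking $f_1=e_1,\ f_2=pe_3,\ f_3=p^{-1}e_2$ exhibits $L_p$ as a $p$-neighbor of $K_p$, which at the coset level yields $aK+\mu\in R_p(aL_i+\nu_i)\Rightarrow aL_i+\bar{p}^2\nu_i\in R_p(aK+\mu)$. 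Combined with the $k$-independence $c_p(aL_i+\nu_i,aL_j+\bar{p}\nu_j)=c_p(aL_i+\bar{p}^k\nu_i,aL_j+\bar{p}^{k+1}\nu_j)$ recorded just before Theorem \ref{thm-Hecke-nbd}, this sets up a correspondence between the neighbors counted by $\pi_{ij}$ and those counted by $\pi_{ji}$; passing from raw counts to $O^+(V)$-orbits via orbit--stabilizer introduces precisely the factors $o^+_i$ and $o^+_j$ and delivers the symmetry.

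The main obstacle is this final symmetry, and concretely the bookkeeping of the $\bar{p}$-twist on the shift vector when relating $\pi_{ij}$ to $\pi_{ji}$: the reciprocal neighbor of a coset in $\pgen(aL+\nu)$ naturally lands in $\pgen(aL+\bar{p}^2\nu)$ rather than back in $\pgen(aL+\nu)$, so one must apply the $k$-independence to realign the indices before the orbit count carries meaning. Verifying that the orbit--stabilizer argument produces exactly the ratio $o^+_i/o^+_j$ is the delicate step; the remainder is formal manipulation of the definitions together with the already-established Theorems \ref{thm-Hecke-actonthetaofcoset} and \ref{thm-Hecke-nbd}.
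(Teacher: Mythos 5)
Your proposal is correct and follows essentially the same route as the paper: reduce to the left-eigenvector identity $\sum_i (o^+_i)^{-1}\pi_{ij}(p^2)=(p+1)(o^+_j)^{-1}$, prove it via the weighted symmetry $o^+_j\,c_p(aL_i+\nu_i,aL_j+\bar{p}\nu_j)=o^+_i\,c_p(aL_j+\bar{p}\nu_j,aL_i+\bar{p}^2\nu_i)$ obtained from the reciprocity of the $p$-neighbor relation together with Lemma \ref{lem-genus-structure}(1), and finish with the row sum $|R_p(\cdot)|=p+1$ from Lemma \ref{lem-p-nbd-properties}(2). The $\bar{p}$-bookkeeping you flag as the delicate point is handled in the paper exactly as you describe, by realigning via the $k$-independence of $c_p$.
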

\begin{proof}
	Let $\{aL_i+\nu_i\}_{1\le i \le h}$ be a set of representatives of proper classes of $\pgen(aL+\nu)$.
	Note that $\sum\limits_{i=1}^h \frac{1}{o^+(aL_i+\nu_i)}=	\sum\limits_{j=1}^h \frac{1}{o^+(aL_j+\bar{p}\nu_j)}$ by Lemma \ref{lem-genus-structure} (1).
	Thus, by Theorem \ref{thm-Hecke-nbd}, it suffices to show that
	\begin{equation}\label{eqn-Hecke-pgen-proof:1}
	\sum\limits_{i=1}^h \frac{1}{o^+(aL_i+\nu_i)}\sum\limits_{j=1}^h c_p(aL_i+\nu_i, aL_j+\bar{p}\nu_j) \T{aL_j+\bar{p}\nu_j}(z) = (p+1) \sum\limits_{j=1}^h \frac{\T{aL_j+\bar{p}\nu_j}(z)}{o^+(aL_j+\bar{p}\nu_j)}.
	\end{equation}
	Note that for $aK+\mu\in R_p(aL_i+\nu_i)$, if $aK+\mu\in\pcls(aL_j+\bar{p}\nu_j)$, then there is a $\sigma\in O^+(V)$ such that $aK+\mu = \sigma(aL_j+\bar{p}\nu_j)$.
	Hence we have
	$$
	o^+(aL_j+\bar{p}\nu_j)c_p(aL_i+\nu_i,aL_j+\bar{p}\nu_j) = |\{\sigma \in O^+(V) : \sigma(aL_j+\bar{p}\nu_j)\in R_p(aL_i+\nu_i)\}|,
	$$
	and it follows from the definition of $R_p(aL_i+\nu_i)$ that $\sigma(aL_j+\bar{p}\nu_j)\in R_p(aL_i+\nu_i)$ if and only if $\sigma^{-1}(aL_i+\bar{p}^2\nu_i)\in R_p(aL_j+\bar{p}\nu_j)$.
	Hence we have
	$$
	\begin{aligned}
	o^+(aL_j+\bar{p}\nu_j)c_p(aL_i+\nu_i,aL_j+\bar{p}\nu_j)&=|\{\sigma \in O^+(V) : \sigma^{-1}(aL_i+\bar{p}^2\nu_i)\in R_p(aL_j+\bar{p}\nu_j)\}|\\		
	&=o^+(aL_i+\bar{p}^2\nu_i)c_p(aL_j+\bar{p}\nu_j,aL_i+\bar{p}^2\nu_i)\\
	&=o^+(aL_i+\nu_i)c_p(aL_j+\bar{p}\nu_j,aL_i+\bar{p}^2\nu_i),
	\end{aligned}
	$$
	where the last equality holds by Lemma \ref{lem-genus-structure} (1). Hence, the left-hand side of \eqref{eqn-Hecke-pgen-proof:1} is equal to
	$$
	\begin{aligned}
		\sum\limits_{j=1}^h \sum\limits_{i=1}^h \frac{c_p(aL_i+\nu_i, aL_j+\bar{p}\nu_j)}{o^+(aL_i+\nu_i)}  \T{aL_j+\bar{p}\nu_j}(z)&=\sum\limits_{j=1}^h \sum\limits_{i=1}^h \frac{c_p(aL_j+\bar{p}\nu_i, aL_i+\bar{p}^2\nu_i)}{o^+(aL_j+\bar{p}\nu_j)}  \T{aL_j+\bar{p}\nu_j}(z)\\
		&=\sum\limits_{j=1}^h \frac{\T{aL_j+\bar{p}\nu_j}(z) }{o^+(aL_j+\bar{p}\nu_j)}  \sum\limits_{i=1}^h c_p(aL_j+\bar{p}\nu_j, aL_i+\bar{p}^2\nu_i)\\
		&=(p+1)\sum\limits_{j=1}^h \frac{\T{aL_j+\bar{p}\nu_j}(z)}{o^+(aL_j+\bar{p}\nu_j)},
	\end{aligned}
	$$
where in the last step we note that the inner sum evaluates to $|R_p(aL_j+\bar{p}\nu_j)|$, which is $p+1$ by Lemma \ref{lem-p-nbd-properties} (2). This proves \eqref{eqn-Hecke-pgen-proof:1}, completing the proof of the theorem.
\end{proof}

In the special case that $p\equiv \pm 1\pmod{a}$ (and $p\nmid 4N_L$), Theorem \ref{thm-Hecke-genera} yields that $\T{\pgen(aL+\nu)}(z)$ is an eigenform of the Hecke operators $T(p^2$) with eigenvalue $p+1$, yielding the conclusion that the theta series for the genus is an Eisenstein series.

%
%
%

\begin{corollary}\label{cor-theta-pgen-gen-equal}
	The theta series $\T{\pgen(aL+\nu)}(z)$ is an Eisenstein series. In particular,
	$$
	\T{\pgen(aL+\nu)}(z)=\T{\gen(aL+\nu)}(z).
	$$
\end{corollary}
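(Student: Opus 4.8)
The plan is to prove that $\T{\pgen(aL+\nu)}(z)$ has no cuspidal component and then to identify it with $\T{\gen(aL+\nu)}(z)$ via Shimura's Siegel--Weil formula. The input is Theorem \ref{thm-Hecke-genera}, which gives $\T{\pgen(aL+\nu)}\,|\,T(p^2)=(p+1)\T{\pgen(aL+\bar p\nu)}$ for every prime $p\nmid 4N_La^2$; because $\bar p\nu$ need not be congruent to $\nu$ modulo $aL$, this is not yet an eigenvalue equation, so the first step is to diagonalize it. Writing $\T{\pgen(aL+\nu)}=\sum_{\chi\pmod{a}}g_\chi$ along the decomposition of Proposition \ref{prop-modformspace-decomp}, I would first establish the genus analogue of \eqref{eqn-relation-on-coefficients-repnum-chi}, namely $\T{\pgen(aL+\bar m\nu)}=\sum_{\chi}\chi(m)\,g_\chi$ for $(m,4N_La^2)=1$; this follows by averaging \eqref{eqn-relation-on-coefficients-repnum-chi} over a set of proper class representatives of $\pgen(aL+\nu)$, using Lemma \ref{lem-genus-structure} to see that the representatives, the orders $o^+$, and the mass are unchanged under $\nu\mapsto s\nu$. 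Since $T(p^2)$ acts diagonally on the $\chi$-summands by construction, combining these two facts gives the genuine eigenvalue equations $T(p^2)g_\chi=(p+1)\chi(p)\,g_\chi$ for each $\chi\pmod{a}$ and all $p\nmid 4N_La^2$.

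I would then split each $g_\chi$ into its Eisenstein part and cuspidal part, and the latter into its projections onto $U_\chi$ and $U_\chi^\perp$. As each of these subspaces is Hecke-stable and the splitting is direct, all three summands are $T(p^2)$-eigenforms with the same eigenvalue $(p+1)\chi(p)$. The $U_\chi^\perp$-component is eliminated because a nonzero form there has a nonzero cuspidal Shimura lift in weight $2$, and the lift intertwines $T(p^2)$ with $T(p)$; its $T(p)$-eigenvalues would then have absolute value $p+1$, contradicting the Ramanujan--Deligne bound $2\sqrt p<p+1$. For the $U_\chi$-component, Proposition \ref{prop-U_t,chi-eigenspace} shows that a nonzero eigenform lies in a single $U_t(4N_La^2,\chi\chi_{4d_L})$ with eigenvalue $\chi(p)\legendre{-td_L}{p}(p+1)$, so agreement with $(p+1)\chi(p)$ would force $\legendre{-td_L}{p}=1$ for all $p\nmid 4N_La^2$.

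The crux, and the reason one wants the eigenvalue equation at all primes $p\nmid 4N_La^2$ and not merely at $p\equiv\pm1\pmod{a}$, is precisely this last condition. Over the progression $p\equiv\pm1\pmod{a}$ the factor $\legendre{-td_L}{p}$ can be identically $1$ — for instance whenever the conductor of $\legendre{-td_L}{\cdot}$ divides $a$ — so the unary theta and Eisenstein eigenvalues are then indistinguishable along that progression. Working with all $p\nmid 4N_La^2$ removes the degeneracy: since $L$ is positive definite and $t\ge 1$ we have $-td_L<0$, so $\legendre{-td_L}{\cdot}$ is the nontrivial quadratic character of $\Q(\sqrt{-td_L})$ and equals $-1$ on a positive density of primes. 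Hence no $U_\chi$-component can survive, and consequently each $g_\chi$, and therefore $\T{\pgen(aL+\nu)}$, is an Eisenstein series.

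To conclude the identity with the full genus, I would note that every coset appearing in the average \eqref{defn-thetaofpropergenus} belongs to $\gen(aL+\nu)$, so by Shimura's formula \eqref{eqn:SiegelAverageGenus} the Eisenstein part of each such $\T{aK+\mu}$ is the common form $\T{\gen(aL+\nu)}$. Passing to Eisenstein parts in the weighted average then shows that the Eisenstein part of $\T{\pgen(aL+\nu)}$ equals $\T{\gen(aL+\nu)}$; as we have just shown that $\T{\pgen(aL+\nu)}$ is purely Eisenstein, the two series coincide.
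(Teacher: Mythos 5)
Your argument is correct, and while its overall architecture matches the paper's (apply Theorem \ref{thm-Hecke-genera}, kill the $U^\perp$-part with Deligne's bound via the Shimura lift, kill the $U$-part by comparing with the eigenvalues of Proposition \ref{prop-U_t,chi-eigenspace}, then identify the result with $\T{\gen(aL+\nu)}$ using Shimura's Siegel--Weil theorem), the execution of the crucial step differs in a substantive way. The paper works with $\T{\pgen(aL+\nu)}$ as a whole, so Theorem \ref{thm-Hecke-genera} only becomes an eigenvalue equation for primes $p\equiv\pm1\pmod{a}$, where $\T{\pgen(aL+\bar p\nu)}=\T{\pgen(aL+\nu)}$; to find a prime in those progressions with $\legendre{-td_L}{p}=-1$ the authors must run a case analysis on the square-free part of $td_L$ via quadratic reciprocity, and in the residual cases they fall back on $p\equiv-1\pmod a$ together with the evenness of the characters $\chi$. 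You instead diagonalize first: averaging \eqref{eqn-relation-on-coefficients-repnum-chi} over the genus with Lemma \ref{lem-genus-structure} gives $\T{\pgen(aL+\bar m\nu)}=\sum_\chi\chi(m)g_\chi$, whence $g_\chi|T(p^2)=(p+1)\chi(p)g_\chi$ for \emph{every} $p\nmid 4N_La^2$, and the elimination of each $U_t$-component reduces to the bare fact that $\legendre{-td_L}{\cdot}$ is a nontrivial quadratic character because $-td_L<0$. This buys you a shorter and more conceptual proof that avoids the reciprocity bookkeeping entirely; what the paper's formulation buys in exchange is a statement (Theorem \ref{thm-Hecke-genera} applied at $p\equiv\pm1\pmod a$) that is reused verbatim elsewhere, e.g.\ in the proof of Corollary \ref{cor-determining-coefficientsofunarytheta}, where the existence of a prime $p\equiv\pm1\pmod a$ with $\legendre{-4td_L}{p}=-1$ is needed again. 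Your concluding identification with $\T{\gen(aL+\nu)}$ by passing to Eisenstein parts in the weighted average is equivalent to the paper's observation that the difference is a cusp form.
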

\begin{proof}
	Let $\{aL_i+\nu_i\}_{1\le i \le h}$ be a set of representatives of proper classes of $\pgen(aL+\nu)$.
	According to the result of Shimura \cite{ShimuraCongruence}, $\T{aL_i+\nu_i}(z) - \T{\gen(aL+\nu)}(z)$ is a cusp form for any $1\le i \le h$, hence
	$$
	\T{\pgen(aL+\nu)}(z)-\T{\gen(aL+\nu)}(z) = \frac{1}{\sum_{i=1}^h o^+(aL_i+\nu_i)^{-1}}\sum\limits_{i=1}^h \frac{\T{aL_i+\nu_i}(z) - \T{\gen(aL+\nu)}(z)}{o^+(aL_i+\nu_i)}
	$$
	is also a cusp form. Since Shimura \cite{ShimuraCongruence} also showed that $\T{\gen(aL+\nu)}(z)$ is an Eisenstein series, it suffices to show that $\T{\pgen(aL+\nu)}(z)$ is an Eisenstein series.
	By Proposition \ref{prop-modformspace-decomp}, it is enough to show that the projection $\pi'(\T{\pgen(aL+\nu)}(z))$ to $U^\perp$ and the projections $\pi_{t,\chi}(\T{\pgen(aL+\nu)}(z))$ to $U_{t}(4N_La^2,\chi\chi_{4d_L})$ for any positive square-free integer $t$ and for any even Dirichlet character $\chi$ modulo $a$ are equal to zero.
	
	Let $p$ be a prime such that $p\equiv 1 \pmod{a}$ and $p\nmid 4N_La^2$. Since the projection $\pi'$ commutes with the Hecke operator $T(p^2)$ and $\bar{p}\equiv 1\pmod{a}$, Theorem \ref{thm-Hecke-genera} implies that
	$$
	\begin{aligned}
	\pi'(\T{\pgen(aL+\nu)}(z))|T(p^2) &= \pi'(\T{\pgen(aL+\nu)}|T(p^2)(z))\\
	& = \pi'( (p+1)\T{\pgen(aL+\nu)}(z)) = (p+1)\pi'(\T{\pgen(aL+\nu)}(z)).	
	\end{aligned}
	$$
	If $\pi'(\T{\pgen(aL+\nu)}(z))\neq 0$, then its Shimura lifts would be non-zero cusp forms of weight $2$, which would be eigenfunctions of $T(p)$ for all $p\nmid 4N_La^2$ such that $p\equiv 1\pmod{a}$ with eigenvalue $p+1$. This contradicts the Weil bounds proven by Deligne \cite{Deligne}, and hence $\pi'(\T{\pgen(aL+\nu)}(z))=0$.
	
	By Theorem \ref{thm-Hecke-genera} and Proposition \ref{prop-U_t,chi-eigenspace}, and since the Hecke operator commutes with $\pi_{t,\chi}$, we have for any prime number $p$ such that $p\nmid 4N_La^2$,
	\begin{equation}\label{eqn-theta-pgen-gen-equal-proof:1}
	\begin{aligned}
		(p+1)\pi_{t,\chi}(\T{\pgen(aL+\bar{p}\nu)}(z)) &= \pi_{t,\chi}(\T{\pgen(aL+\nu)}|T(p^2)(z))\\
		&=(\pi_{t,\chi}(\T{\pgen(aL+\nu)}))|T(p^2)(z)\\
		&=\chi(p)\legendre{-td_L}{p}(p+1)\pi_{t,\chi}(\T{\pgen(aL+\nu)}(z)).
	\end{aligned}
	\end{equation}
	Note that if there is a prime $p$ satisfying 
	\begin{equation}\label{condition-prime-pequiv1moda}
		p \nmid 4N_La^2, \ p\equiv 1 \pmod{a} \  \text{and} \ \legendre{-td_L}{p}=-1,
	\end{equation}
	then $\chi(p)=1$ and hence \eqref{eqn-theta-pgen-gen-equal-proof:1} implies that $\pi_{t,\chi}(\T{\pgen(aL+\nu)}(z))=0$.
	
	Let $s$ be the square-free part of $td_L$, $s_0$ be the odd part of $s$, $g=(a,s_0)$, and $s_0=g\cdot g'$.
	Note that $(a,g')=1$ since $s_0$ is square-free.
	Assume that $g'\neq 1$. Then for any $p\equiv 1 \mod{a}$, since $p\equiv 1\pmod {g}$, quadratic reciprocity implies that
	$$
	\legendre{-td_L}{p}=\legendre{-s/s_0}{p}\legendre{g}{p}\legendre{g'}{p}=\legendre{-s/s_0}{p} (-1)^{\frac{g-1}{2}\cdot \frac{p-1}{2}}\cdot 1 \cdot (-1)^{\frac{g'-1}{2}\cdot \frac{p-1}{2}} \legendre{p}{g'}.
	$$
By the Chinese remainder theorem, we may choose $p\equiv 1\pmod{8}$ so that the above simplifies as 
\[
\legendre{-td_L}{p}= \legendre{p}{g'}.
\]
By the Chinese remainder theorem and Dirichlet's theorem on primes in arithmetic progressions, we may choose $p$ in any congruence class relatively prime to $g'$, and hence we may choose $\legendre{p}{g'}=-1$, yielding a prime satisfying the conditions in \eqref{condition-prime-pequiv1moda}, and hence we are done.

	Now we may assume that $g'=1$, or equivalently, $s\mid 2a$. We first consider the case when $\ord_2(a)\le 2$ and $2\mid s$.
	We may take a prime $p\equiv 1 \pmod{a}$ such that $p\equiv 5 \pmod{8}$. Then, using quadratic reciprocity and noting that $p\equiv 1\pmod{s_0}$ because $s_0\mid a$, we have
	$$
	\legendre{-td_L}{p}=\legendre{-2}{p}\legendre{s_0}{p}=(-1)\cdot\legendre{p}{s_0} =-1.
	$$
	Also, when $\ord_2(a)\le 1$ and $s\equiv 1\pmod{4}$, one may similarly show that any prime $p\equiv 1\pmod{a}$ with $p\equiv 3\pmod{4}$ satisfies $\legendre{-td_L}{p}=-1$.
	Since there exists $p$ satisfying the conditions in \eqref{condition-prime-pequiv1moda} in either case, we are done with these cases. 

	Now we are left with the cases when $s\mid a$ and either $8\mid a$, $\ord_2(a)=2$ with $2\nmid s$, or $\ord_2(a)\le 1$ with $s\equiv 3\pmod{4}$.
	Note that in these cases, if $p\equiv -1 \pmod{a}$, then one may check that
	$$
	\legendre{-td_L}{p}=\legendre{-s/s_0}{p}(-1)^{\frac{s_0-1}{2}\cdot \frac{p-1}{2}}\legendre{p}{s_0} = \legendre{-s/s_0}{p}(-1)^{\frac{s_0-1}{2}\cdot \frac{p-1}{2}}(-1)^{\frac{s_0-1}{2}}=-1.
	$$
	Furthermore, for any prime $p\equiv -1 \pmod{a}$,  we have by Lemma \ref{lem-genus-structure} that
	$$
	\T{\pgen(aL+\bar{p}\nu)}(z)=\T{\pgen(aL-\nu)}(z)=\T{\pgen(aL+\nu)}(z).
	$$
	Therefore, with a prime $p\equiv -1\pmod{a}$, noting that $\chi(p)=1$ for any even character $\chi$ modulo $a$, \eqref{eqn-theta-pgen-gen-equal-proof:1} again implies that $\pi_{t,\chi}(\T{\pgen(aL+\nu)}(z))=0$.
	This completes the proof of the corollary.
\end{proof}

\section{The theta series of the spinor genera}\label{section-unarythetafunctionpart}
In this section, we use measure theory to obtain Theorems \ref{thm-repnumofspinor}, \ref{thm-diff-thetaofpspn-thetaofpgen} and Corollary \ref{cor-determining-coefficientsofunarytheta} on relations of the representation numbers
$r(n,\pspn(aM+\xi))$ for proper spinor genera in the same proper genus.
Actually, Teterin \cite{Teterin} already stated Theorem \ref{thm-repnumofspinor} and the first part of Theorem \ref{thm-diff-thetaofpspn-thetaofpgen}, and proved those by giving a brief explanation.
Moreover, he claimed a stronger statement in \cite[Theorem 1 (2)]{Teterin} on an explicit formula for the difference of the representation numbers for two proper spinor genera. However, there seems to be a minor error in his proof which leads to an incorrect statement (see Remark \ref{rem:Teterin-counterexample} for a counter-example). Although we believe that his assertion can be modified to yield a correct statement, we propose an alternative way to obtain such an explicit formula in Corollary \ref{cor-determining-coefficientsofunarytheta}.
For the rest of this section, we provide some detailed explanation for the proof of the theorems for the convenience of the reader. 
The idea of using measure theory originally comes from Kneser \cite{Kneser} and Schulze-Pillot \cite{Schulze-PillotMassTernarySpinor}.

Let $V$ be a quadratic space and $x\neq0$ be a non-zero vector of $V$. Let $O^+(V,x)$ denote the fixed group of $x$ in $O^+(V)$, and let $O^+(aL+\nu,x)=O^+(V,x)\cap O^+(aL+\nu)$.

A representation $(x,aL+\nu)$ of a number $n$ by a coset $aL+\nu$ is given by a $x\in aL+\nu$ with $Q(x)=n$.
We say that two representations $(x,aL+\nu)$ and $(y,aM+\xi)$ {\em are equivalent} or {\em belong to the same representation class}  if there is a $u\in O^+(V)$ with $ux=y$ and $u(aL+\nu)=aM+\xi$, in which case we write $(x,aL+\nu)\cong (y,aM+\xi)$.
In particular, we have 
$$
	\begin{array}{ll}
		\text{$(x,aL+\nu)\cong (y,aL+\nu)$} &\text{if $ux=y$ with $u\in O^+(aL+\nu)$,}\\
		\text{$(x,aL+\nu)\cong (x,aM+\xi)$} &\text{if $u(aL+\nu)=aM+\xi$ with $u\in O^+(V,x)$.}
	\end{array}
$$
The class of a representation $(x,aL+\nu)$ is denoted by $[(x,aL+\nu)]$. Local representation classes are defined in the same way. We abuse notation and write $\cong$ for local equivalence as well.

For $x\in V$ and $y\in aM+\xi$ with $Q(x)=Q(y)$, it follows from Witt's theorem that there is a representation $(x,aM'+\xi')$ that is equivalent to $(y,aM+\xi)$.
Hence, if we are only interested in the classes of represention of a number $n$, we can restrict ourselves to representations with fixed $x\in V$ satisfying $Q(x)=n$.

We say $(x,aL+\mu)$ and $(y,aM+\xi)$ {\em belong to the same genus} if $(x,aL_p+\nu)\cong (y,aM_p+\xi)$ for every prime spot $p$ including $\infty$.
Note that the classes of representations of $Q(x)$ by cosets in the genus of $aL+\nu$ are in one-to-one correspondence with the double cosets $O^+(V,x)uO_A^+(aL+\nu)$ with $u\in O_A^+(V)$ and $x \in u(aL+\nu)$, and for $u\in O_A^+(V)$ for which $x \in u(aL+\nu)$, the genus of $(x,u(aL+\nu))$ is given by the double coset $O_A^+(V,x)uO_A^+(aL+\nu)$.

Now we consider two Haar measures
$$
\mu=\mu_\infty \times \prod\limits_{p <\infty} \mu_p \quad \text{and} \quad \lambda=\lambda_\infty \times \prod\limits_{p <\infty} \lambda_p
$$ 
on $O_A^+(V,x)=O_\infty^+(V,x) \times \prod\limits_{p<\infty} O_p^+(V_p,x)$ and $O_A^+(V)=O_\infty^+(V) \times \prod\limits_{p<\infty} O_p^+(V_p)$, respectively. Since we are dealing with the case when $V$ is positive definite, the measures are finite.
The {\em measure $r(x,aL+\nu)$ of the representation} $(x,aL+\nu)$ is defined as 
$$
r(x,aL+\nu)=\mu_\infty(O_\infty^+(V,x)/O^+(aL+\nu,x))=\frac{\mu_\infty(O_\infty^+(V,x))}{o^+(aL+\nu,x)}.
$$
This value is uniquely determined once the normalization of $\mu$ is determined. Since we are only interested in comparing ratios of measures with each other, the normalization factor always cancels and hence does not matter for our consideration (see Lemma \ref{lem-measure-repnum}).
The only property we need is that the normalization can be carried out in such a way that under $u\in O^+(V,x)$ the measure on $O_\infty^+(V,x)$ transfers into the measure on $O_\infty^+(V,ux)$.
Hence, the measure is invariant for equivalent representations and is also referred to as the representation measure of the representation class.

Note that the system of representatives of classes in the genus of a representation $(x,aL+\nu)$ may be obtained from $O_A^+(V,x)\cdot (x,aL+\nu)$ and the classes of representations in the genus of $(x,aL+\nu)$ intersected with $O_A^+(V,x)\cdot (x,aL+\nu)$ are in one-to-one correspondence with the double cosets $O^+(V,x)uO_A^+(aL+\nu,x)$ with $u\in O_A^+(V,x)$.

Now we provide three lemmas which translate the language of the number of representations $r(n,aL+\nu)$ of $n\in\N$ into that of the measure of a representation $(x,aL+\nu)$ with $Q(x)=n$.

\begin{lemma}\label{lem-measure-class}
	For any element $u=(u_p)\in O_A^+(V,x)$, we have
	$$
	r(x,u(aL+\nu))=\mu(O^+(V,x)\backslash O^+(V,x)uO_A^+(aL+\nu,x)) \prod\limits_{p<\infty} \mu_p(O^+(aL_p+\nu,x))^{-1}.
	$$	
\end{lemma}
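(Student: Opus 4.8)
The plan is to rewrite the right-hand side as an orbit-stabilizer count attached to the double coset and then recognize the surviving local factors as exactly those that define the representation measure. Throughout, abbreviate $\Gamma=O^+(V,x)$, $G=O_A^+(V,x)$, and $H=O_A^+(aL+\nu,x)=O_\infty^+(V,x)\times\prod_{p<\infty}O^+(aL_p+\nu,x)$, and let $aK+\mu$ be the global coset determined by the local data $(aK+\mu)_p=u_p(aL_p+\nu)$ (well defined by the local--global correspondence for lattice cosets), so that $aK+\mu=u(aL+\nu)$ and $o^+(u(aL+\nu),x)=o^+(aK+\mu,x)$.

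First I would set up the homeomorphism for the double coset. The orbit map $h\mapsto \Gamma u h$ carries $H$ onto $\Gamma\backslash\Gamma u H$, and $\Gamma u h=\Gamma u h'$ holds exactly when $h(h')^{-1}\in u^{-1}\Gamma u\cap H$. Setting $\Gamma_u:=u^{-1}\Gamma u\cap H$, the orbit map therefore induces a homeomorphism $\Gamma_u\backslash H\xrightarrow{\;\cong\;}\Gamma\backslash\Gamma u H$. Next I would identify $\Gamma_u$ concretely: for $\gamma\in\Gamma$ one has $u^{-1}\gamma u\in H$ iff $u_p^{-1}\gamma u_p$ stabilizes $aL_p+\nu$ for every finite $p$, iff $\gamma$ stabilizes $u_p(aL_p+\nu)=(aK+\mu)_p$ for every $p$, iff $\gamma\in O^+(aK+\mu,x)=O^+(u(aL+\nu),x)$. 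Hence conjugation by $u$ gives a group isomorphism $O^+(u(aL+\nu),x)\cong\Gamma_u$, so $\Gamma_u$ is a finite group of order $o^+(u(aL+\nu),x)$.

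With these identifications in hand the measure computation is immediate. Since $V$ is positive definite, $O_\infty^+(V,x)$ is compact, and every local factor of $H$ is compact as well, so $G$ and $H$ are unimodular and $\mu$ descends to the relevant quotients. Because $\Gamma_u$ is finite, the covering $H\to\Gamma_u\backslash H$ is $o^+(u(aL+\nu),x)$-to-one, whence $\mu(\Gamma\backslash\Gamma u H)=\mu(\Gamma_u\backslash H)=\mu(H)/o^+(u(aL+\nu),x)$. Factoring $\mu(H)=\mu_\infty(O_\infty^+(V,x))\prod_{p<\infty}\mu_p(O^+(aL_p+\nu,x))$ and multiplying by $\prod_{p<\infty}\mu_p(O^+(aL_p+\nu,x))^{-1}$ cancels all finite-place factors and leaves $\mu_\infty(O_\infty^+(V,x))/o^+(u(aL+\nu),x)$, which is precisely $r(x,u(aL+\nu))$ by definition; this yields the asserted identity.

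I expect the only genuinely delicate step to be the measure-theoretic descent in the last paragraph: one must verify that the descended quotient measures on $\Gamma_u\backslash H$ and on $\Gamma\backslash\Gamma u H$ correspond under the orbit homeomorphism, and that the finite covering contributes exactly the factor $|\Gamma_u|^{-1}=o^+(u(aL+\nu),x)^{-1}$. This is where the unimodularity of all groups involved (guaranteed by positive-definiteness at $\infty$ and compactness at the finite places) together with the finiteness of $\Gamma_u$ are essential; by comparison, the group-theoretic bookkeeping of the double coset in the first two paragraphs is routine.
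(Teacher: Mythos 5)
Your argument is correct and is essentially the paper's own proof (which follows Kneser): both compute $\mu(O^+(V,x)\backslash O^+(V,x)uO_A^+(aL+\nu,x))$ by an orbit--stabilizer/fundamental-domain count, identifying the relevant stabilizer with $O^+(u(aL+\nu),x)$ via the conjugation $uO_A^+(aL+\nu,x)u^{-1}=O_A^+(u(aL+\nu),x)$ and cancelling the finite local factors using bi-invariance of the $\mu_p$. One small quibble: the finite local factors of $O_A^+(V,x)$ itself are not compact (only the open stabilizers $O^+(aL_p+\nu,x)$ are), so unimodularity of the ad\`elic group should be attributed to its being a restricted product of unimodular orthogonal groups rather than to compactness --- but this does not affect the validity of your argument.
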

\begin{proof}
We roughly follow the argument of Kneser \cite{Kneser}, modified for our case. Note that $\mu$ is right-invariant and $uO_A^+(aL+\nu,x)u^{-1}=O_A^+(u(aL+\nu),x)$. Hence
	\begin{equation}\label{eqn-lem-measureofclass-proof:1}
		\begin{aligned}
		\mu(O^+(V,x)\backslash O^+(V,x)uO_A^+(aL+\nu,x))&=\mu(O^+(V,x)\backslash O^+(V,x)uO_A^+(aL+\nu,x)u^{-1})\\			
		&=\mu(O^+(V,x)\backslash O^+(V,x)O_A^+(u(aL+\nu),x)).
		\end{aligned}
	\end{equation}
	On the other hand, since
	$$
	\begin{aligned}
	&O^+(V,x)\backslash O^+(V,x)O_A^+(u(aL+\nu),x)\\
\cong&\left(O^+(V,x)\cap O_A^+(u(aL+\nu),x)\right) \backslash O_A^+(u(aL+\nu),x)\\
\cong&\left(O_\infty^+(V,x)/O^+(u(aL+\nu),x)\right)\times \prod\limits_{p<\infty} O^+(u_p(aL_p+\nu),x)		
	\end{aligned}
	$$
	is a fundamental domain and $O^+(u_p(aL_p+\nu),x)=u_pO^+(aL_p+\nu,x)u_p^{-1}$, it follows from the left and right invariance of $\mu_p$ that \eqref{eqn-lem-measureofclass-proof:1} is equal to
	$$
	\mu_\infty \left(O_\infty^+(V,x)/O^+(u(aL+\nu),x)\right)\prod\limits_{p<\infty} \mu_p(O^+(aL_p+\nu,x)).
	$$
	This completes the proof of the lemma.
\end{proof}

\begin{lemma}\label{lem-representation-classes-bijection}
	Let $aL+\nu$ be a coset on $V$ and let $x\in V$ with $Q(x)=n$. Then there are bijections
	$$
	\begin{aligned}
		&\{[x,u(aL+\nu)] : [u]\in O^+(V,x)\backslash O_A^+(V)/O_A^+(aL+\nu), \, x\in u(aL+\nu)\}\\
		&\longleftrightarrow\{[(x,aM+\xi)] : aM+\xi \in \pgen(aL+\nu), \, x\in aM+\xi\} \\
		&\longleftrightarrow\{[(y,aL_i+\nu_i)]: Q(y)=n, \, y\in aL_i+\nu_i , \, 1\le i \le h\},		
	\end{aligned}
	$$
	where $\{aL_i+\nu_i\}_{1\le i \le h}$ is a fixed set of representatives of proper classes of $\pgen(aL+\nu)$.
\end{lemma}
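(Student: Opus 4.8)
The plan is to produce each arrow explicitly and read off bijectivity from the matching of equivalence relations; most of the work is verifying that the maps are well-defined.

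For the first correspondence I would make precise the remark stated just before the lemma. Send a double coset $O^+(V,x)\,u\,O_A^+(aL+\nu)$ (with $x\in u(aL+\nu)$) to the representation class $[(x,u(aL+\nu))]$, where $u(aL+\nu)$ is the global coset whose localizations are $u_p(aL_p+\nu)$ (well-defined since $u_p$ is integral for almost all $p$). By the very definition of $\pgen(aL+\nu)$ as the $O_A^+(V)$-orbit of $aL+\nu$, every coset $aM+\xi\in\pgen(aL+\nu)$ has the form $u(aL+\nu)$, and the constraint $x\in aM+\xi$ becomes $x\in u(aL+\nu)$; this condition is constant on the double coset, since replacing $u$ by $wug$ with $w\in O^+(V,x)$ and $g\in O_A^+(aL+\nu)$ gives $u'(aL+\nu)=wu(aL+\nu)$ and $x\in wu(aL+\nu)\Leftrightarrow x\in u(aL+\nu)$. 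To see the map is a bijection I would check that $(x,u(aL+\nu))\cong(x,u'(aL+\nu))$ holds precisely when the double cosets agree: an equivalence is a global $w\in O^+(V,x)$ with $w\,u(aL+\nu)=u'(aL+\nu)$, i.e. $(u')^{-1}wu\in O_A^+(aL+\nu)$, which is exactly equality of double cosets.

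For the second correspondence, which trades the ``fixed $x$, varying coset'' description for the ``fixed class representatives, varying vector'' one, I would argue as follows. Since $aM+\xi\in\pgen(aL+\nu)$ lies in a unique proper class $\pcls(aL_i+\nu_i)$, choose $\sigma\in O^+(V)$ with $\sigma(aM+\xi)=aL_i+\nu_i$ and send $[(x,aM+\xi)]$ to $[(\sigma x,aL_i+\nu_i)]$; here $Q(\sigma x)=Q(x)=n$ and $\sigma x\in aL_i+\nu_i$. Well-definedness is routine: the ambiguity in $\sigma$ lies in $O^+(aL_i+\nu_i)$, and a different representative of the upstairs class is absorbed by composing isometries, so the target class is unchanged. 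Injectivity holds because the $aL_i+\nu_i$ represent distinct proper classes, so two representations with the same image must share the same index $i$, after which transitivity of $\cong$ lifts the downstairs equivalence. For surjectivity, given $(y,aL_i+\nu_i)$ with $Q(y)=n$, I invoke Witt's theorem (as in the paragraph preceding the lemma) to obtain $\tau\in O^+(V)$ with $\tau y=x$; then $\tau(aL_i+\nu_i)$ lies in the proper genus, contains $x$, and $[(x,\tau(aL_i+\nu_i))]$ maps back to $[(y,aL_i+\nu_i)]$.

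The genuinely delicate point, and the one I would flag as the main obstacle, is reconciling the two equivalence relations: matching the two-sided double-coset relation of the first set with the (global, one-sided) representation equivalence, and matching the ``fixed $x$'' normalization of the second set with the ``fixed coset'' normalization of the third. Both reconciliations rest on transporting a representation by a \emph{proper} global isometry, and the surjectivity step in particular requires that $O^+(V)$ rather than merely $O(V)$ act transitively on vectors of norm $n$. In the positive-definite ternary case this is supplied by Witt's theorem: any isometry carrying $y$ to $x$ can be corrected to a proper one by composing with an improper isometry fixing $x$, which exists because $\langle x\rangle^{\perp}$ is an anisotropic binary space and hence admits a reflection. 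Once this transitivity is in hand, the remaining verifications are bookkeeping.
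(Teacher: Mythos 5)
Your proposal is correct and follows essentially the same route as the paper: the paper's own proof simply declares the first bijection to follow from the definition of the genus of a representation and defines the second map by transporting $x$ via a proper isometry onto a fixed class representative, leaving the verifications as an exercise. Your write-up fills in exactly those verifications (constancy on double cosets, well-definedness, injectivity via transitivity of $\cong$, and surjectivity via Witt's theorem corrected to a proper isometry by a reflection in $x^{\perp}$), so there is nothing to flag.
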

\begin{proof}
	The first bijection follows from the definition of the genus of the representation $(x,aM+\xi)$.
	To construct the second map, let $(x,aM+\xi)$ be a representation with $aM+\xi\in \pgen(aL+\nu)$. Note that $aM+\xi= u(aL_i+\nu_i)$ for some $u\in O^+(V)$ and $1\le i \le h$.
	We define a map $\Phi$ from the second set into the third set by $\Phi([(x,aM+\xi)])=[(ux,aL_i+\nu_i)]$. One may check that $\Phi$ is well-defined and is a bijection.
	This proves the lemma.
\end{proof}
\begin{lemma}\label{lem-measure-repnum}
	Let $aL+\nu$ be a coset on $V$ and let $x\in V$ be such that $Q(x)=n$. We have 
	$$
	r(n,aL+\nu)=\frac{\lambda_\infty(O_\infty^+(V))}{\mu_\infty(O_\infty^+(V,x))} \left(\sum r(y,aL+\nu)\right) \lambda_\infty (O_\infty^+(V)/O^+(aL+\nu))^{-1},
	$$
	where the sum runs over a system of representatives of the classes of representations $(y,aL+\nu)$ of $n$ by $aL+\nu$.
	Moreover, taking $\mathfrak{h}$ to be either $\pspn(aL+\nu)$ or $\pgen(aL+\nu)$, we have
	$$
	r(n,\mathfrak{h})=\frac{\lambda_\infty(O_\infty^+(V))}{\mu_\infty(O_\infty^+(V,x))}\left(\sum\limits_{aM+\xi\in \mathfrak{h}} r(x,aM+\xi) \right) \left(\sum\limits_{\pcls(aM+\xi) \in \mathfrak{h}/_\sim} \lambda_\infty (O_\infty^+(V)/O^+(aM+\xi)) \right)^{-1},
	$$
	where the first summation runs over a system of representatives of the classes of representations $[(x,aM+\xi)]$ with $aM+\xi \in \mathfrak{h}$.
	Furthermore, the denominator
	$$
	\sum\limits_{\pcls(aM+\xi) \in \pspn(aL+\nu)/_\sim} \lambda_\infty (O_\infty^+(V)/O^+(aM+\xi))= \lambda_\infty (O_\infty^+(V)) \cdot \text{Mass}(\pspn(aL+\nu))
	$$
	has the same value for all proper spinor genera in $\pgen(aL+\nu)$, 
where 

\end{lemma}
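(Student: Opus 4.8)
The plan is to prove the three assertions in order, reducing the first two to orbit--stabilizer counting combined with the invariance property of the archimedean measure $\mu_\infty$ recorded above, and isolating the genuinely measure-theoretic content in the final ``same value'' claim.

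\emph{First identity.} I would start from the observation that $O^+(aL+\nu)$ acts on the finite set $R(n,aL+\nu)$ with orbits equal to the representation classes $[(y,aL+\nu)]$. Orbit--stabilizer then gives
\[
r(n,aL+\nu)=\sum_{[y]}\frac{o^+(aL+\nu)}{o^+(aL+\nu,y)}.
\]
Because $O_\infty^+(V)$ acts transitively on the vectors of norm $n$, any two stabilizers $O_\infty^+(V,y)$ and $O_\infty^+(V,x)$ are conjugate, so the normalization of $\mu_\infty$ yields $\mu_\infty(O_\infty^+(V,y))=\mu_\infty(O_\infty^+(V,x))$ and hence $o^+(aL+\nu,y)^{-1}=r(y,aL+\nu)/\mu_\infty(O_\infty^+(V,x))$. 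Substituting, and converting the remaining factor $o^+(aL+\nu)$ via the finiteness relation $\lambda_\infty(O_\infty^+(V)/O^+(aL+\nu))=\lambda_\infty(O_\infty^+(V))/o^+(aL+\nu)$, reproduces the first displayed formula.

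\emph{Second identity.} Unwinding \eqref{defn-thetaofproperspinorgenus} (resp.\ \eqref{defn-thetaofpropergenus}) gives $r(n,\mathfrak{h})=\text{Mass}(\mathfrak{h})^{-1}\sum_{aK+\mu}o^+(aK+\mu)^{-1}r(n,aK+\mu)$ over proper class representatives $aK+\mu$ in $\mathfrak{h}$. Inserting the first identity cancels each $o^+(aK+\mu)$ and produces
\[
r(n,\mathfrak{h})=\frac{1}{\text{Mass}(\mathfrak{h})\,\mu_\infty(O_\infty^+(V,x))}\sum_{aK+\mu}\ \sum_{[y]\ \mathrm{in}\ aK+\mu}r(y,aK+\mu).
\]
Here Lemma \ref{lem-representation-classes-bijection}, restricted to classes inside $\mathfrak{h}$, identifies the double sum with $\sum_{[(x,aM+\xi)]}r(x,aM+\xi)$ over representation classes with $aM+\xi\in\mathfrak{h}$; the restriction is legitimate since the bijection $\Phi$ in that lemma sends $[(x,aM+\xi)]$ to $[(ux,aL_i+\nu_i)]$ with $aM+\xi=u(aL_i+\nu_i)$, so it respects the spinor-genus partition, and $r$ is constant on representation classes. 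It then remains to rewrite $\text{Mass}(\mathfrak{h})^{-1}$ through the denominator identity.

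\emph{Denominator and equal masses.} The equality $\sum_{\pcls(aM+\xi)}\lambda_\infty(O_\infty^+(V)/O^+(aM+\xi))=\lambda_\infty(O_\infty^+(V))\,\text{Mass}(\mathfrak{h})$ is immediate from \eqref{eqn:Mass} after factoring out $\lambda_\infty(O_\infty^+(V))$ and using $\lambda_\infty(O_\infty^+(V)/O^+(aM+\xi))=\lambda_\infty(O_\infty^+(V))/o^+(aM+\xi)$; substituting it produces the second formula exactly. The crux---and the step I expect to be the main obstacle---is that this denominator is independent of which proper spinor genus inside $\pgen(aL+\nu)$ one takes, i.e.\ that $\text{Mass}(\pspn)$ is a genus invariant; this has to be established directly, since the mass relations stated earlier are themselves consequences of the present lemma. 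Here I would argue adelically: decompose $O_A^+(V)=\bigsqcup_i O^+(V)u_iO_A^+(aL+\nu)$ into proper classes and, by a computation parallel to that of Lemma \ref{lem-measure-class} together with the unimodularity of the local orthogonal groups (which makes each $\lambda_p$ conjugation invariant, so the compact-open volumes $\lambda_p(u_{i,p}O_p^+(aL_p+\nu)u_{i,p}^{-1})=\lambda_p(O_p^+(aL_p+\nu))$ are independent of $i$), show that the $\lambda$-volume of the set $S_{\mathfrak s}$ formed by the double cosets of a fixed spinor genus $\mathfrak s$ equals $C\cdot\text{Mass}(\mathfrak s)$ with $C=\lambda_\infty(O_\infty^+(V))\prod_{p<\infty}\lambda_p(O_p^+(aL_p+\nu))$. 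Since $G=O^+(V)O_A'(V)\backslash O_A^+(V)/O_A^+(aL+\nu)$ is a finite abelian group and the sets $S_{\mathfrak s}$ are exactly the fibres of the projection $O_A^+(V)\to G$, right translation by an idele representing the class $[\mathfrak s]-[\mathfrak s_0]$ maps $S_{\mathfrak s_0}$ bijectively onto $S_{\mathfrak s}$ while preserving the Haar measure $\lambda$; this forces $\text{Mass}(\mathfrak s_0)=\text{Mass}(\mathfrak s)$ and completes the proof.
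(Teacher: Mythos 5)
Your proposal is correct and follows essentially the same route as the paper: the same orbit--stabilizer count plus the invariance of $\mu_\infty(O_\infty^+(V,y))$ for $Q(y)=n$ for the first identity, the same reduction of the second identity via the bijection of Lemma \ref{lem-representation-classes-bijection} restricted to $\mathfrak{h}$, and for the equal-mass claim the same adelic volume computation parallel to Lemma \ref{lem-measure-class} combined with the key facts that $O_A'(V)$ contains the commutator subgroup of $O_A^+(V)$ (so $O^+(V)O_A'(V)uO_A^+(aL+\nu)=O^+(V)O_A'(V)O_A^+(aL+\nu)u$) and that $\lambda$ is right-invariant. The only blemish is terminological: the translating element realizing the class difference $[\mathfrak{s}]-[\mathfrak{s}_0]$ is an adelic rotation $u\in O_A^+(V)$ whose spinor norm represents that id\`ele class, not an id\`ele itself.
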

\begin{proof}
	Note that the group $O^+(aL+\nu)$ acts on the set $R(n,aL+\nu)=\{y\in aL+\nu : Q(y)=n\}$, and the orbit of $y\in R(n,aL+\nu)$ with respect to this action corresponds to the representation class of $(y,aL+\nu)$. Therefore,
	$$
	r(n,aL+\nu)= \sum\limits_{[y]:\text{orbits}} |\{\sigma y : \sigma \in O^+(aL+\nu)\}| = \sum\limits_{[(y,aL+\nu)]} \frac{o^+(aL+\nu)}{o^+(aL+\nu,y)},
	$$
	where the last sum runs over a system of representatives of the classes of representations $(y,aL+\nu)$ of $n$ by $aL+\nu$.
	Noting that $r(y,aL+\nu)=\frac{\mu_\infty (O_{\infty}^+(V,y))}{o^+(aL+\nu,y)}$ and  $\lambda_\infty (O_\infty^+(V)/O^+(aL+\nu))=\frac{\lambda_\infty (O_\infty^+(V))}{o^+(aL+\nu)}$, we obtain the first equality of the lemma after noting that $\mu_{\infty}(O_{\infty}^+(V,y))$ only depends on $Q(y)=n$.
	
	The moreover part of the lemma when $\mathfrak{h}=\pgen(aL+\nu)$ follows by the second bijection in Lemma \ref{lem-representation-classes-bijection} together with the proof of the first equality of the lemma.
	The proof for the case when $\mathfrak{h}=\pspn(aL+\nu)$ may also be done in the same manner since the second bijection of Lemma \ref{lem-representation-classes-bijection} still holds when restricted to $\pspn(aL+\nu)$ by the same argument.
	
	Finally, applying a similar argument as in Lemma \ref{lem-measure-class}, one may show that for any $u\in O_A^+(V)$,
	$$
	\begin{aligned}
	\sum\limits_{\pcls(aM+\xi) \in \pspn(u(aL+\nu))/_\sim}& \lambda_\infty (O_\infty^+(V)/O^+(aM+\xi))\\
	&=\lambda(O^+(V)\backslash O^+(V)O_A'(V)uO_A(aL+\nu))\prod\limits_{p<\infty} \lambda_p(O^+(aL_p+\nu))^{-1}\\
	&=\lambda(O^+(V)\backslash O^+(V)O_A'(V)O_A(aL+\nu))\prod\limits_{p<\infty} \lambda_p(O^+(aL_p+\nu))^{-1}
	\end{aligned}
	$$
	The last equality holds because $O^+(V)O_A'(V)uO_A^+(aL+\nu)=O^+(V)O_A'(V)O_A^+(aL+\nu)u$ since $O_A'(V)$ contains the commutator group of $O_A^+(V)$, and $\lambda$ is invariant under right multiplication 	(see also \cite[Theorem 2.4]{LSun}).
	This proves the furthermore part of the lemma.	
\end{proof}

Now we are ready to prove Theorem \ref{thm-repnumofspinor}, which relates the representation numbers for different spinor genera in the genus of a shifted lattice $aL+\nu$ for a ternary lattice $L$
\begin{theorem}\label{thm-repnumofspinor}
	Let $t$ be a square-free positive integer, $aL+\nu$ be a ternary lattice coset, and set $E=\Q(\sqrt{-td_L})$. Then we have\\
\begin{enumerate}[leftmargin=*,label=\rm{(\arabic*)}]
\item If $\theta(O^+(aL_p+\nu)) \not\subseteq N_{E_\mathfrak{p}/\Q_p}(E_\mathfrak{p}^\times)$ for a prime $p$ $(\mathfrak{p}\mid p)$, then
$$
r(tm^2,\pspn(aL+\nu))=r(tm^2,\pspn(aM+\xi))
$$ 
for all $m\in\N$ and $aM+\xi\in\pgen(aL+\nu)$.
\item  If $\theta(O^+(aL_p+\nu)) \subseteq N_{E_\mathfrak{p}/\Q_p}(E_\mathfrak{p}^\times)$ for all primes $p$ $(\mathfrak{p}\mid p)$, then the genus splits into two half-genera and 
$$
r(tm^2,\pspn(aL+\nu))=r(tm^2,\pspn(aM+\xi))
$$ 
for all $m\in\N$ and $aM+\xi$ in the same half-genus of $aL+\nu$ with respect to $t$.
\end{enumerate}
\end{theorem}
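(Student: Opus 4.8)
The plan is to adopt the adelic measure-theoretic framework of Kneser and Schulze-Pillot, comparing the representation \emph{measures} of a single fixed vector across the proper spinor genera in $\pgen(aL+\nu)$ and then passing back to representation numbers via Lemma \ref{lem-measure-repnum}. First I would fix $x\in V$ with $Q(x)=tm^2$ and set $W=(\Q x)^\perp$, so that $O^+(V,x)$ is the rotation group of the binary space $W$ and, crucially, $\theta(O_A^+(V,x))=\theta(O_A^+(W))=N_{E/\Q}(I_E)$ with $E=\Q(\sqrt{-td_L})$. Because $-tm^2d_L$ differs from $-td_L$ by the square $m^2$, the field $E$ and its idelic norm group $N_{E/\Q}(I_E)$ are independent of $m$; this is exactly what will make the splitting into half-genera, and the resulting equalities, uniform in $m$ and dependent on $t$ only.

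By the moreover part of Lemma \ref{lem-measure-repnum}, for $\mathfrak{h}=\pspn(aM+\xi)$ the number $r(tm^2,\mathfrak{h})$ is a constant depending only on $n=tm^2$ times $\left(\sum_{aM'+\xi'\in\mathfrak{h}}r(x,aM'+\xi')\right)$ divided by $\lambda_\infty(O_\infty^+(V))\,\text{Mass}(\mathfrak{h})$, and the furthermore part of the same lemma guarantees that this denominator is identical for every proper spinor genus in the genus. Hence both assertions reduce to computing how the total representation measure $\sum_{aM'+\xi'\in\mathfrak{h}}r(x,aM'+\xi')$ varies with $\mathfrak{h}$.

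To analyse this, I would use Lemma \ref{lem-representation-classes-bijection} to parametrise the representation classes $[(x,aM'+\xi')]$ by the orbit $O_A^+(V,x)\cdot(x,aL+\nu)$, and Lemma \ref{lem-measure-class} to rewrite the per-spinor-genus measure as a $\mu$-measure of a subset of $O_A^+(V,x)$. The proper spinor genus of $u(aL+\nu)$ is recorded by the class of $\theta(u)$ in $G:=I_\Q/\Q^\times\prod_{p}\theta(O^+(aL_p+\nu))$, so on this orbit the assignment ``representation class $\mapsto$ spinor genus'' is induced by $\theta$ restricted to $O_A^+(V,x)$, whose image is $N_{E/\Q}(I_E)$. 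Its fibers are translates of one fixed subgroup and therefore carry equal $\mu$-measure; consequently the representation measure is equidistributed among the spinor genera lying in the single coset of $\mathrm{image}\big(N_{E/\Q}(I_E)\big)$ in $G$ that the orbit reaches, and this forces $r(tm^2,\cdot)$ to be constant on that coset.

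It remains to identify $\mathrm{image}\big(N_{E/\Q}(I_E)\big)$ in $G$, which is precisely the content of the second isomorphism in Proposition \ref{prop-spinornormmap}: the quotient of $G$ by this image is $I_\Q/\Q^\times N_{E/\Q}(I_E)\prod_{p}\theta(O^+(aL_p+\nu))$. In case (1), the hypothesis $\theta(O^+(aL_p+\nu))\not\subseteq N_{E_\mathfrak{p}/\Q_p}(E_\mathfrak{p}^\times)$ at some $p$ together with the index bound $[\Q_p^\times:N_{E_\mathfrak{p}/\Q_p}(E_\mathfrak{p}^\times)]\le 2$ collapses this quotient to the trivial group, so the image is all of $G$, the orbit meets every spinor genus, and all representation numbers coincide. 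In case (2), the opposite local hypothesis makes the quotient of order two, giving the asserted splitting into two half-genera; running the orbit argument once from $aL+\nu$ and once from a representative of the opposite half shows $r(tm^2,\cdot)$ is constant on each half, as claimed. I expect the main obstacle to be step three: verifying rigorously that summing the local representation measures over a spinor genus produces a genuinely translation-invariant $\theta$-preimage measure on $O_A^+(V,x)$, so that the ``equal fibers have equal measure'' principle applies, since this is where the stabilisers $O_A^+(aL+\nu,x)$ and the restriction to $O_A^+(V,x)$ must be handled with care. By comparison, deducing the order of the quotient in Proposition \ref{prop-spinornormmap} from the two global hypotheses, via the local norm-index bound and reciprocity, should be routine.
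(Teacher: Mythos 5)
Your proposal follows essentially the same measure-theoretic route as the paper's proof: fix $x$ with $Q(x)=tm^2$, use Lemmas \ref{lem-measure-class}, \ref{lem-representation-classes-bijection}, and \ref{lem-measure-repnum} to reduce to comparing the representation measure contributed to each proper spinor genus, exploit the right-invariance of $\mu$ on $O_A^+(V,x)$ to get equidistribution over the spinor genera reached by $O_A^+(V,x)$, and then read off from the second isomorphism of Proposition \ref{prop-spinornormmap} (together with $[I_\Q:\Q^\times N_{E/\Q}(I_E)]=2$ and Hilbert reciprocity) whether this set of spinor genera is everything or exactly half. The one point to tighten is that the orbit $O_A^+(V,x)\cdot(x,aL+\nu)$ parametrises only a single genus of representations of $tm^2$ rather than all representation classes, so one must sum over all such genera and observe, as the paper does, that the splitting criterion depends only on $t$, $d_L$, and the local spinor norm groups and hence is identical for each genus of representations.
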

\begin{proof}
Let $V$ be the ternary quadratic space containing $aL+\nu$ and for $n=tm^2$ we let $x\in aL+\nu$ be a vector with $Q(x)=n$. We furthermore let $W$ denote the subspace orthogonal to $x$ in $V$, that is, $V=\Q x\perp W$.  Recall that the proper spinor genera from the $\pgen(aL+\nu)$ correspond to the double cosets $O^+(V)O_A'(V)uO_A^+(aL+\nu)$ with $u\in O_A^+(V)$, and note that by Lemmas \ref{lem-measure-class} and \ref{lem-measure-repnum}, the contribution of the genus of $(x,aL+\nu)$ to $r(n,\pspn(u(aL+\nu)))$ is $\mu_{\infty}(O_\infty^+(V,x))^{-1}\text{Mass}(\pspn(aL+\nu))^{-1}$ times
\begin{equation}\label{eqn-contribution-to-pspn}
\mu(O^+(V,x)\backslash O_A^+(V,x)\cap  O^+(V)O_A'(V)uO_A^+(aL+\nu)) \prod\limits_{p<\infty} \mu_p(O^+(aL_p+\nu,x))^{-1}.	
\end{equation}
Since $O_A'(V)$ contains the commutator group of $O_A^+(V)$, $u$ can be extracted to the right. 
If $u\in O_A^+(V,x)$, then by the right invariance of $\mu$, \eqref{eqn-contribution-to-pspn} is independent of $u$.
On the other hand, note that $O^+(V)O_A'(V)uO_A^+(aL+\nu)=O^+(V)O_A'(V)vO_A^+(aL+\nu)$ for some $v\in O_A^+(V,x)$ if and only if 
\begin{equation}\label{eqn-Adele-cond-halfgenera}
u\in O^+(V)O_A'(V)O_A^+(V,x)O_A^+(aL+\nu),
\end{equation}
and by Proposition \ref{prop-spinornormmap}, noting that $O_A^+(V,x)=O_A^+(W)$, it is equivalent to
$$
\theta(u)\in \Q^\times N_{E/\Q}(I_E)\prod\limits_{p\in\Omega} \theta(O^+(aL_p+\nu)).
$$
We naturally split the index giving the number of proper spinor genera in $\gen^+(aL+\nu)$ from  Proposition \ref{prop-spinornormmap} into 
\begin{multline}\label{eqn:indexsplit}
\left[I_{\Q}:\Q^\times\prod_{p\in\Omega} \theta\left(O^+\left(aL_p+\nu\right)\right)\right] = \left[I_\Q:\Q^\times N_{E/\Q}(I_E)\prod\limits_{p\in\Omega} \theta(O^+(aL_p+\nu))\right]\\
\times\left[\Q^\times N_{E/\Q}(I_E)\prod\limits_{p\in\Omega} \theta(O^+(aL_p+\nu)):\Q^{\times}\prod_{p\in\Omega} \theta\left(O^+\left(aL_p+\nu\right)\right)\right].
\end{multline}
We claim that the first factor in \eqref{eqn:indexsplit} is always either $1$ or $2$, and these precisely correspond to the cases (1) and (2) of the theorem, respectively. To show this, we first evaluate the first factor in \eqref{eqn:indexsplit}. Note that $[I_\Q:\Q^\times N_{E/\Q}(I_E)]\le 2$ by \cite[65:21]{OMBook} and this index is equal to $2$ if and only if $-nd_L\notin (\Q^\times)^2$. In particular, the first factor in \eqref{eqn:indexsplit} is at most $2$. 
Furthermore, we have $[I_\Q:\Q^\times N_{E/\Q}(I_E)\prod\limits_{p\in\Omega} \theta(O^+(aL_p+\nu))]=2$ if and only if
\begin{gather}
\prod\limits_{p\in\Omega} \theta(O^+(aL_p+\nu))\subseteq\Q^\times N_{E/\Q}(I_E) \quad \text{and}\quad -nd_L\notin(\Q^\times)^2 \nonumber \\
	\Leftrightarrow \prod\limits_{p\in\Omega} \theta(O^+(aL_p+\nu))\subseteq N_{E/\Q}(I_E) \quad \text{and}\quad -nd_L\notin(\Q^\times)^2 \label{condition-splitting-integers}
\end{gather}
Only the assertion for the last $\Rightarrow$ need some explanation. For fixed $p\in\Omega$, let $x(p)\in \theta(O^+(aL_p+\nu))$, and consider $i=(i_q)\in \prod\limits_{q\in\Omega} \theta(O^+(aL_q+\nu))$ such that $i_p=x(p)$ and $i_q=1$ for any $q\neq p$. Then there exist $b\in \Q^\times$ such that $b\cdot i\in N_{E/\Q}(I_E)$. Since $b$ is a local norm at every spot $q\neq p$, the Hilbert symbol $\left(\frac{b,-td_L}{q}\right)=1$ for any $q\neq p$. By the Hilbert reciprocity law, we have $\left(\frac{b,-td_L}{p}\right)=1$, hence $b$ is a local norm at $p$. Since $b\cdot i_p$ is a local norm at $p$, $x(p)=i_p$ is also a local norm at $p$, hence proving the assertion. 

The condition in \eqref{condition-splitting-integers} precisely splits into the two cases (1) and (2) given in the theorem.  If \eqref{condition-splitting-integers} holds, then we are in case (2) and the first factor on the right-hand side of \eqref{eqn:indexsplit}, which is precisely the index of the group on the right-hand side of \eqref{eqn-Adele-cond-halfgenera} in $O_A^+(V)$ by Proposition \ref{prop-spinornormmap}, is $2$. Hence $\pgen(aL+\nu)$ is divided into two half-genera, both containing the same number of proper spinor genera given by the second factor in \eqref{eqn:indexsplit}, which can be rewritten as 
$$
\left[O^+(V)O_A'(V)O_A^+(V,x)O_A^+(aL+\nu):O^+(V)O_A'(V)O_A^+(aL+\nu)\right],
$$
in such a way that the the genus of the representation $(x,aL+\nu)$ makes the same contribution to $r(n,\pspn(aM+\xi))$ for any coset $aM+\xi$ in the same half-genus of $aL+\nu$.

Otherwise, if \eqref{condition-splitting-integers} does not hold, then we are in case (1) and the the genus of $(x,aL+\nu)$ makes the same contribution to $r(n,\pspn(u(aL+\nu)))$ for any $u\in O_A^+(V)$.

Note that the conditions in \eqref{condition-splitting-integers} do not depend on $aL+\nu$ and $x$, but only on $\pgen(aL+\nu)$, $d_L$ and $n$. Hence, as we run through all genera of representations $(x,u(aL+\nu))$ with $u\in O_A^+(V)$, the determination of whether $\pgen(aL+\nu)$ splits into two halves remains the same. Therefore, by Lemmas \ref{lem-representation-classes-bijection} and \ref{lem-measure-repnum}, we conclude the theorem.
\end{proof}
\begin{remark} If the rank of $L$ is greater than $3$, then the orthogonal complement $W$ of a vector $x$ in $V$ is of rank at least $3$. Hence there is a $v\in O_A^+(W) = O_A^+(V,x)$ such that $u\in O_A'(V)v$. Thus the value \eqref{eqn-contribution-to-pspn} is independent of $u\in O_A^+(V)$ so that we have $r(n,\pspn(aM+\xi))$ are the same for any $aM+\xi\in \pgen(aL+\nu)$. This provides a proof for Teterin's statement \cite[Theorem 1 (1)]{Teterin}.
\end{remark}

From the above theorem, we may show the difference of two theta series $\T{\pspn(aL+\nu)}(z)$ and $\T{\pgen(aL+\nu)}(z)$ is in the space $U$, which determines the second piece in the splitting \eqref{eqn-splitting-of-thetaofcosets}.
\begin{theorem}\label{thm-diff-thetaofpspn-thetaofpgen}
	The Fourier coefficients of $\T{\pspn(aL+\nu)}(z)-\T{\pgen(aL+\nu)}(z)$ are supported on 
\[
\bigcup_{t\in \mathcal{T}_{\pgen(aL+\nu)}} t\Z^2
\]
where
$$
	\mathcal{T}_{\pgen(aL+\nu)}:=\left\{t\in\N: t\text{ is square-free, } \prod\limits_{p\in\Omega}\theta\left(O^+(aL_p+\nu)\right) \subseteq N_{\Q(\sqrt{-td_L})/\Q}\left(I_{\Q (\sqrt{-td_L})}\right) \right\}
	$$
is a finite set. Furthermore, we have $\T{\pspn(aL+\nu)}(z)-\T{\pgen(aL+\nu)}(z)\in U$.
\end{theorem}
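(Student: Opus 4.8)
The plan is to dispatch the three assertions — finiteness of $\mathcal{T}_{\pgen(aL+\nu)}$, the square-class support of $D:=\T{\pspn(aL+\nu)}-\T{\pgen(aL+\nu)}$, and the membership $D\in U$ — in that order, reducing the last and main claim to the Hecke-eigenvalue argument already deployed in Corollary \ref{cor-theta-pgen-gen-equal}. Finiteness I would settle by a purely local computation. For a prime $\ell\nmid 2N_Ld_La$ the localization $aL_\ell+\nu$ is unramified, so $\theta(O^+(aL_\ell+\nu))\supseteq\Z_\ell^\times$ contains a non-square unit. If such an $\ell$ divides a square-free $t$, then $\ell$ ramifies in $E=\Q(\sqrt{-td_L})$ (it divides $td_L$ exactly once), whence $N_{E_\mathfrak{l}/\Q_\ell}(E_\mathfrak{l}^\times)\cap\Z_\ell^\times=(\Z_\ell^\times)^2$ contains no non-square unit; thus the defining containment of $\mathcal{T}_{\pgen(aL+\nu)}$ fails at $\ell$. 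Hence every $t\in\mathcal{T}_{\pgen(aL+\nu)}$ has all its prime divisors among the finitely many divisors of $2N_Ld_La$, and squarefreeness forces $\mathcal{T}_{\pgen(aL+\nu)}$ to be finite.

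For the support statement I would invoke Theorem \ref{thm-repnumofspinor}. Writing each $n$ uniquely as $tm^2$ with $t$ square-free, case (1) of that theorem asserts that when $t\notin\mathcal{T}_{\pgen(aL+\nu)}$ the value $r(tm^2,\pspn(aM+\xi))$ is the same for every proper spinor genus in $\pgen(aL+\nu)$. Since all proper spinor genera share a common mass (Lemma \ref{lem-measure-repnum}), $\T{\pgen(aL+\nu)}$ is the plain average of the $\T{\pspn}$ over the $g^+(aL+\nu)$ spinor genera, so its $tm^2$-th coefficient equals that common value $r(tm^2,\pspn(aL+\nu))$; hence the $tm^2$-th coefficient of $D$ vanishes for $t\notin\mathcal{T}_{\pgen(aL+\nu)}$, giving exactly the support on $\bigcup_{t\in\mathcal{T}_{\pgen(aL+\nu)}}t\Z^2$.

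For $D\in U$ I would first record that $D$ is a cusp form: by Shimura's Siegel--Weil theorem \cite{ShimuraCongruence} the Eisenstein part of each class theta series $\T{aK+\mu}$ with $aK+\mu\in\pgen(aL+\nu)$ equals $\T{\gen(aL+\nu)}=\T{\pgen(aL+\nu)}$ (the latter by Corollary \ref{cor-theta-pgen-gen-equal}), so both $\T{\pspn(aL+\nu)}$ and $\T{\pgen(aL+\nu)}$ carry this same Eisenstein part and $D$ is cuspidal. It then suffices to show the orthogonal projection $\pi'(D)$ of $D$ onto $U^\perp$ vanishes, and here I would run the argument of Corollary \ref{cor-theta-pgen-gen-equal} verbatim once $D$ is exhibited as a $T(p^2)$-eigenform of eigenvalue $p+1$ on a suitable set of primes. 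Concretely, I would establish the spinor-genus analogue of Theorem \ref{thm-Hecke-genera}, namely $\T{\pspn(aL+\nu)}|T(p^2)=(p+1)\T{\pspn(aL+\bar p\nu)}$, for every prime $p\nmid 4N_La^2$ whose spinor-norm class is trivial; the proof copies that of Theorem \ref{thm-Hecke-genera}, the only new ingredient being that for such $p$ the $p$-neighbours in $R_p$ remain in the same proper spinor genus (shifted by $\bar p$). Restricting further to $p\equiv\pm1\pmod a$, so that $aL+\bar p\nu=aL\pm\nu$ and hence $\T{\pspn(aL+\bar p\nu)}=\T{\pspn(aL+\nu)}$ by Lemma \ref{lem-genus-structure} together with the symmetry $x\mapsto -x$, I obtain $D|T(p^2)=(p+1)D$ for every prime in this set, which has positive density by Chebotarev's theorem applied to the finite spinor class group and Dirichlet's theorem. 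Since $\pi'$ commutes with $T(p^2)$, the form $\pi'(D)\in U^\perp$ would then be a Hecke eigenform of eigenvalue $p+1$ for infinitely many $p$; its $t$-th Shimura lift is a non-zero weight-$2$ cusp form that is an eigenform of $T(p)$ with eigenvalue $p+1$, contradicting the Weil bound $|a_p|\le 2\sqrt p$ of Deligne \cite{Deligne}. Thus $\pi'(D)=0$, and cuspidality yields $D\in U$.

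I expect the main obstacle to be precisely the spinor-genus Hecke relation: verifying that, for primes $p$ of trivial spinor-norm class, every $p$-neighbour of a coset in a proper spinor genus again lies in that proper spinor genus after the $\bar p$-shift. This is the delicate interaction between the $p$-neighbourhood operation and the splitting of $\pgen(aL+\nu)$ into half-genera governed by the local spinor norms $\theta(O^+(aL_p+\nu))$ and the norm groups $N_{E_\mathfrak{p}/\Q_p}(E_\mathfrak{p}^\times)$ appearing in Proposition \ref{prop-spinornormmap} and Theorem \ref{thm-repnumofspinor}; once it is in place, the remainder is a direct transcription of the Eisenstein-projection argument of Corollary \ref{cor-theta-pgen-gen-equal}, with the important difference that here the $U_t$-projections of $D$ are left untouched, since they are exactly the unary theta content we wish to retain.
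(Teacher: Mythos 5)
Your handling of the first two assertions coincides with the paper's own argument and is correct: the finiteness of $\mathcal{T}_{\pgen(aL+\nu)}$ follows from the local fact that $\theta(O^+(aL_p+\nu))=\Z_p^\times(\Q_p^\times)^2$ contains non-square units for $p\nmid 4N_La^2$, and the square-class support of $D:=\T{\pspn(aL+\nu)}-\T{\pgen(aL+\nu)}$ is exactly Theorem \ref{thm-repnumofspinor}(1) combined with the equality of the spinor masses from Lemma \ref{lem-measure-repnum}.

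For the main assertion $D\in U$ you take a genuinely different route, and it has a real gap. The paper does not use Hecke operators here at all: it writes $D=\sum_\chi f_\chi$ with $f_\chi\in S_{3/2}(\Gamma_0(4N_La^2),\chi\chi_{4d_L})$, compares $D|_{3/2}\gamma_{s_0}$ computed two ways --- via the character decomposition and via \eqref{eqn-theta-transformed-by-Gamma0} together with Lemma \ref{lem-genus-structure}, which shows it is again a difference of the same type for $aL+\overline{s_0}\nu$ with the same set $\mathcal{T}_{\pgen(aL+\nu)}$ --- and then uses orthogonality of Dirichlet characters to conclude that each $f_\chi$ is itself supported on $\bigcup_{t\in\mathcal{T}_{\pgen(aL+\nu)}}t\Z^2$, hence lies in $\bigoplus_t U_t(4N_La^2,\chi\chi_{4d_L})$ by the definition of $U_t$ and the cited results of Cipra, Kojima and Sturm. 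Your alternative (cuspidality of $D$, then killing $\pi'(D)\in U^\perp$ via the eigenvalue $p+1$ and Deligne's bound) is Schulze-Pillot's original strategy for lattices, and the surrounding architecture --- cuspidality via Shimura's Siegel--Weil theorem, commutation of $\pi'$ with $T(p^2)$, positive density of admissible primes, the Shimura-lift contradiction --- is sound. However, the load-bearing step, the spinor-genus commutation relation $\T{\pspn(aL+\nu)}|T(p^2)=(p+1)\T{\pspn(aL+\bar p\nu)}$ for $p$ of trivial spinor class, is exactly what you do not prove, and it is not a ``direct transcription'' of Theorem \ref{thm-Hecke-genera}: that proof sums $c_p(aL_j+\bar p\nu_j,aL_i+\bar p^2\nu_i)$ over \emph{all} classes $i$ of the genus and only then uses $|R_p|=p+1$. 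To restrict both indices to a single proper spinor genus you must first show that every coset in $R_p(aL+\nu)$ lies in one and the same proper spinor genus, namely the translate of $\pspn(aL+\bar p\nu)$ by the id\`{e}le $j(p)$ --- equivalently, that a $p$-neighbour differs from $aL+\bar p\nu$ by an adelic rotation trivial away from $p$ whose $p$-component has spinor norm $p$ modulo units and squares. That local computation (the coset analogue of Eichler/Benham--Hsia, built from the basis \eqref{eqn:Kpbasis}) is the actual content of the lemma; the paper itself only asserts the corresponding fact, with a reference, in Section \ref{subsection-graph}, and in Theorem \ref{thm-Theta-same-spn} it deliberately works with the full neighbourhood sum, $\pi_t$-projections, and chains through $Z_p(aL+\nu)$ precisely so as not to need it. Until you supply that lemma, your proof of the third assertion is incomplete, whereas the paper's character-orthogonality argument closes the theorem without it.
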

\begin{proof}
	Note that for any prime $p$ such that $p\nmid 4N_La^2$ (hence $p\nmid d_L$), we have
	$$
	\theta(O^+(aL_p+\nu))=\theta(O^+(L_p))=\Z_p^\times (\Q_p^\times)^2,
	$$
which contains non-square units, and hence $p\nmid t$ for any $t\in \mathcal{T}_{\pgen(aL+\nu)}$. Therefore, the set $\mathcal{T}_{\pgen(aL+\nu)}$ is a finite set, and it follows from the independence of the spinor masses in Lemma \ref{lem-measure-repnum} and the equality in Theorem \ref{thm-repnumofspinor} (1) that the Fourier coefficients of $\T{\pspn(aL+\nu)}(z)-\T{\pgen(aL+\nu)}(z)$ are supported on square classes in $\bigcup_{t\in \mathcal{T}_{\pgen(aL+\nu)}}t\Z^2$, yielding the first claim.

It remains to show that $f(z):=\T{\pspn(aL+\nu)}(z)-\T{\pgen(aL+\nu)}(z)\in U$. Recall that 
	$$
	f(z)\in S_{3/2}(\Gamma_0(4N_La^2)\cap \Gamma_1(a),\chi_{4d_L}) = \bigoplus_{\chi \pmod{a}}  S_{3/2}(\Gamma_0(4N_La^2),\chi\chi_{4d_L}),
	$$
	and write $f(z)=\sum_{\chi \pmod{a}} f_\chi(z)$ for some $f_\chi(z)\in S_{3/2}(\Gamma_0(4N_La^2),\chi\chi_{4d_L})$.
	
	We claim that the Fourier coefficients of $f_\chi(z)$ are also supported on the square classes in $\mathcal{T}_{\pgen(aL+\nu)}$ for each Dirichlet character $\chi$ modulo $a$.
	If we show the claim, then we have
	$$
	f_\chi(z)\in \bigoplus_{t\in \mathcal{T}_{\pgen(aL+\nu)}} U_t(\Gamma_0(4N_La^2),\chi\chi_{4d_L}),
	$$
	which implies the theorem. Let $f(z)=\sum\limits_{n\ge 1} a(n)q^n$ and $f_\chi(z)=\sum\limits_{n\ge 1} a_\chi(n)q^n$. Note that for any $s\in (\Z/a\Z)^\times$, there exists an integer $s_0$ with $s_0\equiv s\pmod{a}$ such that $\gamma_{s_0}=\left(\begin{smallmatrix}
		\ast & \ast\\
		\ast & s_0
	\end{smallmatrix}\right) \in \Gamma_0(4N_La^2)$.
	Using the modularity of $f_{\chi}$, we have
	\begin{equation}\label{eqn-actgammaon-f-fchi}
	f|_{3/2}\gamma_{s_0}(z)=\sum\limits_{\chi \pmod{a}} f_\chi|_{3/2}\gamma_{s_0}(z)=\chi_{4d_L}(s_0)\cdot\sum\limits_{\chi \pmod{a}} \chi(s)f_\chi(z).
	\end{equation}
	On the other hand, by \eqref{defn-thetaofpropergenus}, \eqref{defn-thetaofproperspinorgenus}, \eqref{eqn-theta-transformed-by-Gamma0}, and Lemma \ref{lem-genus-structure}, we have
	\begin{equation}\label{eqn-actgammaon-f-thetaofpspnpgen}
	f|_{3/2}\gamma_{s_0}(z)=\chi_{4d_L}(s_0) \cdot (\T{\pspn(aL+\overline{s_0}\nu)}(z)-\T{\pgen(aL+\overline{s_0}\nu)}(z)),
	\end{equation}
	where $\overline{s_0}$ is an integer which is an inverse of $s_0$ modulo $a$. Note that $\mathcal{T}=\mathcal{T}_{\pgen(aL+\nu)}=\mathcal{T}_{\pgen(aL+\overline{s_0}\nu)}$ since $O^+(aL_p+\nu)=O^+(aL_p+\overline{s_0}\nu)$ for any prime $p$ by Lemma \ref{lem-genus-structure} (1).
	Comparing Fourier coefficients of the right-hand sides of \eqref{eqn-actgammaon-f-fchi} and \eqref{eqn-actgammaon-f-thetaofpspnpgen}, we may conclude that for any positive integer $n$ outside any of the square classes in $\bigcup_{t\in \mathcal{T}} t\Z^2$,	
	$$
	\sum\limits_{\chi \pmod{a}} \chi(s) a_\chi(n) = 0.
	$$
	Since the above equality holds for any $s\in (\Z/a\Z)^\times$, it follows from the orthogonality of the Dirichlet characters modulo $a$ that $a_\chi(n)=0$ for any $\chi$ modulo $a$.
	This proves the claim, hence completing the proof of the theorem. 
\end{proof}

One may observe from the proof of the above theorem that for $aM+\xi\in\pgen(aL+\nu)$, the differences $\T{\pspn(aL+s\nu)}(z)-\T{\pspn(aM+s\xi)}(z)$ for any integer $s$ coprime to the conductor $a$ of $aL+\nu$ share some property. The following corollary describes some relation on their Fourier coefficients.

\begin{corollary}\label{cor-determining-coefficientsofunarytheta}
	Let $t$ be a square-free positive integer and let $s$ be an integer coprime to the conductor $a$ of a coset $aL+\nu$. Let $aM+\xi$ be a coset in $\pgen(aL+\nu)$, and define $a_s(n)$ by
	$$
	r(tn^2,\pspn(aL+s\nu))-r(tn^2,\pspn(aM+s\xi))=a_s(n)\cdot n.
	$$
	If $a_s(n)$ is not identically zero, then $4t \mid 4N_La^2$.
	If $4N_La^2=4t\cdot t'\cdot b^2$ with square-free $t'$, then $a_s(n)$ is defined modulo $b$ and satisfying
	\begin{enumerate}[label={\rm (\arabic*)}]
		\item $a_{s}(nm)=a_{s\bar{m}}(n)\legendre{-4td_L}{m}$ if $(m,4N_La^2)=1$,
		\item $a_s(n)=0$ if $b\mid n$,
	\end{enumerate}
	where $\bar{m}$ is an integer which is an inverse of $m$ modulo $a$.
\end{corollary}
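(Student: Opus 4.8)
The plan is to realize $g(z) := \T{\pspn(aL+s\nu)}(z)-\T{\pspn(aM+s\xi)}(z)$ as an element of the unary theta space $U$ and then extract every assertion from the explicit shape of the spanning set \eqref{UtNchi-is-spanned-by}. Since $aM+\xi\in\pgen(aL+\nu)$, Lemma \ref{lem-genus-structure}(2) gives $aM+s\xi\in\pgen(aL+s\nu)$, so $\T{\pgen(aL+s\nu)}=\T{\pgen(aM+s\xi)}$; writing $g=(\T{\pspn(aL+s\nu)}-\T{\pgen(aL+s\nu)})-(\T{\pspn(aM+s\xi)}-\T{\pgen(aM+s\xi)})$ and applying Theorem \ref{thm-diff-thetaofpspn-thetaofpgen} to each difference shows $g\in U$. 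Decompose $g=\sum_{\chi\pmod{a}}\sum_t g_{\chi,t}$ with $g_{\chi,t}\in U_t(4N_La^2,\chi\chi_{4d_L})$, and use \eqref{UtNchi-is-spanned-by} to write $g_{\chi,t}=\sum_u c_{u,\chi}\,h(tu^2z,\psi_\chi)$ with $\psi_\chi=\chi\chi_{4d_L}\legendre{-4t}{\cdot}=\chi\legendre{-td_L}{\cdot}$ and $4tm_{\psi_\chi}^2u^2\mid 4N_La^2$. Since the $q^{tn^2}$-coefficient of $h(tu^2z,\psi_\chi)$ equals $\psi_\chi(n/u)(n/u)$ when $u\mid n$ and $0$ otherwise, and distinct square-free $t$ occupy disjoint square classes, the $q^{tn^2}$-coefficient of $g$ is $n\sum_\chi a_{s,\chi}(n)$ with $a_{s,\chi}(n):=\sum_{u\mid n}\frac{c_{u,\chi}}{u}\psi_\chi(n/u)$. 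This displays the factor $n$, gives $a_s(n)=\sum_\chi a_{s,\chi}(n)$, and, since a nonzero such term forces $U_t(4N_La^2,\cdot)\neq 0$ and hence $4t\mid 4N_La^2$, establishes the first assertion.

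The remaining claims all flow from one divisibility. Writing $4N_La^2=4t\,t'b^2$ with $t'$ square-free, the condition $4tm_{\psi_\chi}^2u^2\mid 4N_La^2$ reads $m_{\psi_\chi}^2u^2\mid t'b^2$; comparing $\ell$-adic valuations, the left side is even while $v_\ell(t')\le 1$, forcing $2v_\ell(m_{\psi_\chi}u)\le 2v_\ell(b)$ at every prime, i.e. $m_{\psi_\chi}u\mid b$. Because $u\mid b$, the condition $u\mid n$ depends only on $n\bmod b$, and because $u m_{\psi_\chi}\mid b$, the value $\psi_\chi(n/u)$ depends only on $n\bmod b$; hence each $a_{s,\chi}$, and so $a_s$, is a function of $n\bmod b$, which is the meaning of ``defined modulo $b$''. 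For part (2), if $b\mid n$ then every surviving $u$ satisfies $m_{\psi_\chi}\mid b/u$ and $b/u\mid n/u$, so $\psi_\chi(n/u)=0$; here we use that $h(z,\psi)$ is a nonzero weight $3/2$ form only for odd $\psi$, so $m_{\psi_\chi}>1$ whenever $g_{\chi,t}\neq0$. Thus every term vanishes and $a_s(n)=0$.

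For part (1) I would combine multiplicativity of $\psi_\chi$ with a shift relation among the $\chi$-components. As $(m,4N_La^2)=1$ and each occurring $u$ divides $b$, while $b\mid 4N_La^2$, we have $(m,u)=1$, so $u\mid nm\Leftrightarrow u\mid n$ and $\psi_\chi(nm/u)=\psi_\chi(m)\psi_\chi(n/u)$, giving $a_{s,\chi}(nm)=\psi_\chi(m)\,a_{s,\chi}(n)$. Separately, the transformation law \eqref{eqn-theta-transformed-by-Gamma0}, extended to proper spinor genera via Lemma \ref{lem-genus-structure}, identifies the shift $s\nu\mapsto s\bar m\nu$ with multiplying the $\chi$-component by $\chi(m)$, so that $a_{s\bar m}(n)=\sum_\chi\chi(m)a_{s,\chi}(n)$. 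Since $m$ is odd (as $4\mid 4N_La^2$) we have $\psi_\chi(m)=\chi(m)\legendre{-td_L}{m}=\chi(m)\legendre{-4td_L}{m}$; summing $a_{s,\chi}(nm)=\chi(m)\legendre{-4td_L}{m}a_{s,\chi}(n)$ over $\chi$ yields $a_s(nm)=\legendre{-4td_L}{m}a_{s\bar m}(n)$, which is (1).

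I expect the main obstacle to be organizational rather than conceptual: one must verify that the transformation law and the component bookkeeping pass correctly from single cosets to proper spinor genera (using Lemma \ref{lem-genus-structure}), and, most importantly, extract the sharp divisibility $m_{\psi_\chi}u\mid b$ from $m_{\psi_\chi}^2u^2\mid t'b^2$ via square-freeness of $t'$, since this single step simultaneously produces the period $b$, the vanishing in (2), and the coprimality needed in (1).
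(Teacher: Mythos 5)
Your proof is correct and, for the definition of $a_{s,\chi}$, the divisibility $4t\mid 4N_La^2$, the periodicity modulo $b$, and part (1), it follows the same route as the paper: project onto $U_t$, decompose over the Dirichlet characters $\chi$ modulo $a$ via Proposition \ref{prop-modformspace-decomp}, read off divisibility and periodicity from the spanning set \eqref{UtNchi-is-spanned-by}, and obtain the shift relation $a_{s\bar m}(n)=\sum_\chi\chi(m)a_{s,\chi}(n)$ from the $\gamma_m$-transformation law as in the proof of Theorem \ref{thm-diff-thetaofpspn-thetaofpgen}. Two points of comparison are worth recording. First, you make explicit the deduction $m_{\psi_\chi}u\mid b$ from $4tm_{\psi_\chi}^2u^2\mid 4t\,t'b^2$ and the square-freeness of $t'$; the paper simply asserts that $a_{s,\chi}(n)$ is ``defined modulo $b$'' from the shape of the spanning set, so your parity-of-valuations argument is a genuine (and welcome) filling-in of that step. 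Second, your proof of part (2) is genuinely different: you argue directly that when $b\mid n$ every term $\psi_\chi(n/u)$ vanishes because $m_{\psi_\chi}\mid b/u\mid n/u$ and $m_{\psi_\chi}>1$ (since only odd $\psi$ yield nonzero weight-$3/2$ forms), whereas the paper deduces (2) from part (1) together with the periodicity modulo $b$ and the existence, established in the proof of Corollary \ref{cor-theta-pgen-gen-equal}, of a prime $p\equiv\pm1\pmod{a}$ with $p\nmid 4N_La^2$ and $\legendre{-4td_L}{p}=-1$, giving $a_s(n)=a_s(np)=-a_s(n)$. Your version of (2) is more self-contained and avoids invoking the prime-selection argument; the paper's version reuses machinery it has already built and sidesteps any discussion of the parity of $\psi_\chi$. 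Both are valid.
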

\begin{proof}
	Note that if $\pi_t$ denotes the projection onto $U_t=\oplus_{\chi\pmod{a}} U_t(4N_La^2,\chi\chi_{4d_L})$, then
	$$
	f_s(z):=\sum\limits_{n\ge 1} a_s(n)nq^{tn^2}=\pi_t(\T{\pspn(aL+s\nu)}(z)-\T{\pspn(aM+s\xi)}(z)).
	$$
	Write $f_s(z)=\sum\limits_{\chi \pmod{a}} f_{s,\chi}(z)$ with $f_{s,\chi}(z)=\sum\limits_{n\ge 1} a_{s,\chi}(n)nq^{tn^2}\in U_t(4N_La^2,\chi\chi_{4d_L})$.
	Since the space $U_t(4N_La^2,\chi\chi_{4d_L})$ is spanned by \eqref{UtNchi-is-spanned-by} with $N\mapsto4N_La^2$ and $\chi\mapsto\chi\chi_{4d_L}$, we have $4t\mid 4N_La^2$ and $a_{s,\chi}(n)$ is defined modulo $b$, hence so is $a_s(n)$. Furthermore,  we have
	$$
	a_{s,\chi}(nm)=a_{s,\chi}(n)\chi(m) \legendre{-4td_L}{m}
	$$
	for any integer $m$ with $(m,4N_La^2)=1$.
	On the other hand, following the same argument used in the proof of Theorem \ref{thm-diff-thetaofpspn-thetaofpgen} to obtain \eqref{eqn-actgammaon-f-fchi} and \eqref{eqn-actgammaon-f-thetaofpspnpgen}, we have for any integer $m$ with $(m,4N_La^2)=1$ that (noting that $(m,4d_L)=1$, hence $\chi_{4d_L}(m)\neq0$)
	$$
	\sum\limits_{\chi\pmod{a}} \chi(m) f_{s,\chi}(z) = f_{s\bar{m}}(z)\text{, hence} 	\sum\limits_{\chi\pmod{a}}\chi(m) a_{s,\chi}(n)= a_{s\bar{m}}(n) \text{ for any }n\in\N.
	$$
	Therefore, for any integer $m$ with $(m,4N_La^2)=1$, we have
	$$
	a_{s}(nm)=\sum\limits_{\chi \pmod{a}} a_{s,\chi}(nm)= \sum\limits_{\chi \pmod{a}}a_{s,\chi}(n)\chi(m)  \legendre{-4td_L}{m} =a_{s\bar{m}}(n)\legendre{-4td_L}{m}.
	$$
	This proves $(1)$. To prove $(2)$, we note that as in the proof of Corollary \ref{cor-theta-pgen-gen-equal}, there is a prime $p$ such that $p\nmid 4N_La^2$, $p\equiv \pm1 \pmod{a}$, and $\legendre{-4td_L}{p}=-1$. Also, by Lemma \ref{lem-genus-structure}, we have $a_s(n)=a_{s\bar{p}}(n)$ for any $p\equiv \pm 1\pmod{a}$.
	If $b\mid n$, then since $a_s(n)$ is defined modulo $b$, we have
	$$
	a_s(n)=a_s(np)=a_{s\bar{p}}(n)\legendre{-4td_L}{p}=-a_{s}(n),
	$$
	and hence $a_s(n)=0$. This completes the proof of the corollary.
\end{proof}

\begin{remark}\label{rem:Teterin-counterexample}
	We remark a failure of the statement of Teterin \cite[Theorem 1 (2)]{Teterin} by giving a counter-example.
	Let $L=\Z e_1 + \Z e_2 + \Z e_3$ be a ternary lattice with a basis $\{e_i\}$ whose corresponding Gram matrix is a diagonal matrix $\diag(1,1,1)$.
	Put $a=12$ and $\nu= 5(e_1+e_2+e_3)\in L$. 
	According to \cite[Theorem 1 (2)]{Teterin}, in order for a positive integer $m\in t\Z^2$ with a square-free $t$ to satisfy 
	$$
	r(m,\pspn(aL+\nu))-r(m,\pgen(aL+\nu))\neq 0,
	$$ 
	one should necessarily have $t|N_L$. Since $N_L=1$, the only candidate is $t=1$.
	
	However, this turns out to be wrong since Haensch and the first author verified in \cite{HaenschKane} that
	$$
	\T{\pspn(aL+\nu)}(z)=\T{\pgen(aL+\nu)}(z)-\frac{1}{8}\sum_{r\in\Z \atop r\equiv 1 \pmod{4}} rq^{3r^2}
	$$
	by explicitly constructing representatives of proper classes in $\pspn(aL+\nu)$ and $\pgen(aL+\nu)$, respectively, and by checking that the first finitely many (up to a certain number coming from the so-called ``valance-formula") Fourier coefficients of the both sides are equal. We refer readers to \cite[Lemma 5.1 and (5.2)]{HaenschKane} for details on this example.
\end{remark}

\section{Comparison of the theta series of ternary lattice cosets in the same spinor genus}\label{section-thetainthesamespinorgenus}

\subsection{Deficiency between theta functions of proper classes and proper spinor genera}
Let $p$ be a prime number such that $p\nmid 4N_La^2$.
Let $Z_p(aL+\nu)$ be the set of cosets $aK+\mu\in\pgen(aL+\nu)$ such that
$$
\Z[1/p](aL)+\nu = \Z[1/p](aK)+\mu,
$$
equivalently,
\begin{equation}\label{eqn:Zpequivalent}
aL_q+\nu = aK_q+\mu \text{ for all } q\neq p \quad \text{and}\quad L_p\cong K_p.
\end{equation}
In the case of lattices, the set $Z_p(aL)$ coincides with the set $Z(aL,p)$ defined in \cite{Schulze-PillotGraph}. Note that one may show from the definition that 
\begin{equation}\label{eqn-inclusion-neighborhoodinZp}
	\{aM+p\xi : aM+\xi \in R_p(aL+\nu)\} \subseteq Z_p(aL+\nu).
\end{equation}
In particular, we have $R_p(aL+\nu)\subseteq Z_p(aL+\nu)$ if $p\equiv 1 \pmod{a}$. 
\begin{lemma}\label{lem-conseq-of-strongapproximation}
	Let $aL+\nu$ be a ternary coset with conductor $a$, and let $p$ be a prime number such that $p\nmid 4N_La^2$.
	Then for any $aM+\xi\in \pspn(aL+\nu)$, there exists a coset $aK+\mu\in Z_p(aL+\nu)$ such that $aK+\mu \in \pcls(aM+\xi)$.
\end{lemma}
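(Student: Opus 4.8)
The plan is to deduce the lemma from strong approximation for the spin group at the prime $p$. First I would unwind the definition of the proper spinor genus: since $aM+\xi\in\pspn(aL+\nu)$, there is an element $u\in O^+(V)O_A'(V)$ with $u(aL+\nu)=aM+\xi$, say $u=\sigma\Sigma$ with $\sigma\in O^+(V)$ and $\Sigma=(\Sigma_q)\in O_A'(V)$ (recall that $O_A^+(V)$ acts on cosets, as noted after \cite[Lemma 4.2]{ChanOh13}). Because $\sigma$ is a global isometry, $\pcls(aM+\xi)=\pcls(\Sigma(aL+\nu))$, so after relabelling I may assume $aM+\xi=\Sigma(aL+\nu)$ with $\Sigma\in O_A'(V)$; here $\Sigma(aL+\nu)$ denotes the coset whose localization at each $q$ is $\Sigma_q(aL_q+\nu)$.

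Next I would pin down precisely what is needed. For a global $\rho\in O'(V)$, set $aK+\mu:=\rho(aM+\xi)=\rho\Sigma(aL+\nu)$; then automatically $aK+\mu\in\pcls(aM+\xi)$ (as $\rho\in O'(V)\subseteq O^+(V)$) and $aK+\mu\in\pgen(aL+\nu)$ (as $\rho\Sigma\in O_A^+(V)$). Its localization at $q\neq p$ equals $\rho\Sigma_q(aL_q+\nu)$, which equals $aL_q+\nu$ exactly when $\rho\Sigma_q\in O^+(aL_q+\nu)$, while at $p$ one gets $K_p=\rho\Sigma_p(L_p)\cong L_p$ for free. Thus, in view of the characterization \eqref{eqn:Zpequivalent} of $Z_p(aL+\nu)$, it suffices to produce $\rho\in O'(V)$ with $\rho\Sigma_q\in O^+(aL_q+\nu)$ for every prime $q\neq p$; since $\rho$ and $\Sigma_q$ lie in $O'$, this is equivalent to asking $\rho\in O'(aL_q+\nu)\Sigma_q^{-1}$ for all $q\neq p$.

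Such a $\rho$ exists by strong approximation. The crucial input is that $V_p$ is isotropic: as $p\nmid 4N_La^2$, the prime $p$ is odd and $L_p$ is unimodular, so its reduction modulo $p$ is a nondegenerate ternary form over $\F_p$, which is necessarily isotropic, and Hensel's lemma lifts an isotropic vector to $V_p$. Hence $O'(V_p)$ is non-compact and the spin group satisfies strong approximation relative to $\{p\}$ (cf. \cite[\S 104]{OMBook}); concretely, $O'(V)$ is dense in the restricted product $\prod'_{q\neq p}O'(V_q)$. Since $\Sigma\in O_A'(V)$, we have $\Sigma_q\in O'(aL_q+\nu)$ for almost all $q$, so the set $\prod_{q\neq p}O'(aL_q+\nu)\cdot(\Sigma_q^{-1})_{q\neq p}$ is a nonempty open subset of that restricted product and therefore contains a global $\rho\in O'(V)$. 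This $\rho$ has the required property, and $aK+\mu=\rho(aM+\xi)$ lies in $Z_p(aL+\nu)\cap\pcls(aM+\xi)$.

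I expect the main obstacle to be the careful verification of hypotheses and the adelic bookkeeping of strong approximation, rather than any new idea: checking that $V_p$ is genuinely isotropic so that the free place $p$ absorbs the approximation defect, keeping the one-sided coset conditions $\rho\Sigma_q\in O^+(aL_q+\nu)$ consistent across all $q\neq p$, and confirming that the approximating element can be chosen in $O'(V)\subseteq O^+(V)$ so that $aK+\mu$ genuinely lies in the proper class of $aM+\xi$. One should also record that every coset in $\pgen(aL+\nu)$ has conductor $a$, so the resulting $aK+\mu$ automatically has the correct conductor.
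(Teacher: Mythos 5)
Your proof is correct and follows essentially the same route as the paper: both reduce to producing a global rotation $\rho\in O'(V)$ that matches the adelic element $\Sigma$ modulo the local stabilizers $O'(aL_q+\nu)$ at all $q\neq p$, using strong approximation for rotations (O'Meara 104:4) with the isotropy of $V_p$ supplying the free place. The only differences are cosmetic — you phrase strong approximation as density of $O'(V)$ in the restricted product away from $p$ and land in an explicit open set, whereas the paper uses the $\varepsilon$-norm formulation with a finite bad set $T$ — and your explicit verification that $V_p$ is isotropic is a welcome detail the paper leaves implicit.
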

\begin{proof}
	Let $aM+\xi$ be a coset in $\pspn(aL+\nu)$. Then there exist a $\sigma\in O^+(V)$ and $\Sigma=(\Sigma_q)\in O_A'(V)$ such that
	\begin{equation}\label{eqn:sigmaqSigmaq}
	aL_q+\nu = \sigma_q \Sigma_q (aM_q+\xi) \quad \text{for any prime $q$}.
	\end{equation}
Following \cite[Section 101]{OMBook}, we choose a basis $x_1,\dots,x_n$ of $V=\Q L$ and for 
\[
x=\sum_{j=1}^n a_j x_j \in V
\]
 we define the norm $\lVert x \rVert_q:=\max_{1\leq j\leq n} \lVert a_j\rVert_q$ for any prime $q$.  Let $J:=\Z x_1 + \cdots +\Z x_n$ be the lattice of elements with $\lVert x\rVert_q\leq 1$ for all $q$ and set $S:=\{q:\text{prime} \mid q\neq p\}$.
	Let $T$ be a finite set of prime numbers not containing $p$ such that
	$$
	aL_q+\nu=aL_q, \ aM_q+\xi = aM_q, \text{ and } \sigma_q^{-1}(aL_q)=J_q=aM_q \quad \text{for any $q\notin T\cup \{p\}$}.
	$$
	Note that $S$ is an indefinite set of spots since $V_p$ is isotropic. Therefore, by the strong approximation for rotation \cite[104:4]{OMBook}, there exist a $\rho\in O'(V)$ such that
	$$
	\begin{cases}
		\lVert \rho_q\rVert_q=1 & \text{if }q\notin  T\cup \{p\},\\		
		\lVert \rho_q-\Sigma_q \rVert_q<\varepsilon & \text{if } q\in T,
	\end{cases}
	$$
	where $\varepsilon>0$ is chosen small enough such that $\Sigma_q(M_q)=\rho_q(M_q)$ and $\sigma_q\xi - \Sigma_q \xi \in \Sigma_q(aM_q)$ for any $q\in T$.
	Now we set $aK+\mu=\sigma\rho(aM+\xi)\in\cls^+(aM+\xi)$. Then from the constructions, noting that $\rho_qJ_q=J_q$ for all $q\not\in T\cup\{p\}$ by \cite[101:4]{OMBook}, and using \eqref{eqn:sigmaqSigmaq}, 
	$$
	aK_q+\mu=
	\begin{cases}
		\sigma_q\rho_q(aM_q+\xi)=\sigma_q\rho_q(aM_q)=\sigma_q(\rho_qJ_q)=\sigma_q(J_q)=aL_q=aL_q+\nu & \text{if } q\notin T\cup \{p\},\\
		\sigma_q\rho_q(aM_q+\xi)=\sigma_q(\Sigma_q(aM_q)+\rho_q\xi)=\sigma_q(\Sigma_q(aM_q)+\Sigma_q\xi)=aL_q+\nu & \text{if } q\in T,\\
		K_p\cong L_p &\text{if } q=p.
	\end{cases}
	$$
	Therefore, $aK+\mu \in Z_p(aL+\nu)$ by \eqref{eqn:Zpequivalent}, and this proves the lemma.
\end{proof}

Finally, we are ready to prove the following theorem that determines the third piece in the splitting \eqref{eqn-splitting-of-thetaofcosets}, the cusp form which is orthogonal to the unary theta functions.
\begin{theorem}\label{thm-Theta-same-spn}
	Let $aM+\xi \in \pspn(aL+\nu)$. Then we have
	$$
	\T{aL+\nu}(z)-\T{aM+\xi}(z)\in U^\perp.
	$$
	Moreover, we have $\T{aL+\nu}(z)-\T{\pspn(aL+\nu)}(z)\in U^\perp$.
\end{theorem}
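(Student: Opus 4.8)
The plan is to show that the projection of $\T{aL+\nu}$ onto the space $U$ of unary theta functions depends only on the proper spinor genus. Both displayed assertions follow from this: the ``moreover'' statement is obtained from the first by averaging over the proper classes of $\pspn(aL+\nu)$, since $\T{\pspn(aL+\nu)}$ is the corresponding weighted mean, so its $U$-projection equals the common value $\pi_U(\T{aL+\nu})$; and $\T{aL+\nu}-\T{aM+\xi}$ (resp.\ $\T{aL+\nu}-\T{\pspn(aL+\nu)}$) is a cusp form by Corollary \ref{cor-theta-pgen-gen-equal}, because cosets in one proper genus share the Eisenstein part $\T{\pgen(aL+\nu)}$. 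Fixing representatives $\{aL_i+\nu_i\}_{1\le i\le h}$ of the proper classes of $\pgen(aL+\nu)$ with $aL+\nu$ and $aM+\xi$ among them, Proposition \ref{prop-modformspace-decomp} reduces everything to proving, for each square-free $t$ and each Dirichlet character $\chi$ modulo $a$, that the projection $\pi_{t,\chi}$ onto $U_t(4N_La^2,\chi\chi_{4d_L})$ satisfies $\pi_{t,\chi}(\T{aL_i+\nu_i})=\pi_{t,\chi}(\T{aL_j+\nu_j})$ whenever $aL_i+\nu_i$ and $aL_j+\nu_j$ lie in the same proper spinor genus.

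First I would set up an eigenvector equation. For a prime $p\equiv 1\pmod a$ with $p\nmid 4N_La^2$ one has $\bar p\equiv 1\pmod a$, so Theorem \ref{thm-Hecke-nbd} reads $\T{G_0}\,|\,T(p^2)=(\pi_{ij}(p^2))\,\T{G_0}$ with $\T{G_0}=(\T{aL_i+\nu_i})_i$. Writing $B_p=(\pi_{ij}(p^2))$ and applying the projection $\pi_{t,\chi}$, which commutes with $T(p^2)$, Proposition \ref{prop-U_t,chi-eigenspace} together with $\chi(p)=1$ shows that $v:=(\pi_{t,\chi}(\T{aL_i+\nu_i}))_i$ obeys $B_p v=\legendre{-td_L}{p}(p+1)\,v$. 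Repeating the Chinese-remainder-and-Dirichlet case analysis from the proof of Corollary \ref{cor-theta-pgen-gen-equal}, but now selecting the opposite quadratic residue, produces for each relevant $t$ a prime $p\equiv 1\pmod a$ with $p\nmid 4N_La^2$ and $\legendre{-td_L}{p}=+1$, so that $v$ is a genuine $(p+1)$-eigenvector of $B_p$.

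Next I would analyze the combinatorics of $B_p$. Its entries are nonnegative, its row sums are $\sum_j\pi_{ij}(p^2)=|R_p(aL_i+\nu_i)|=p+1$ by Lemma \ref{lem-p-nbd-properties}(2), and it is symmetrizable: specialising the identity in the proof of Theorem \ref{thm-Hecke-genera} to $p\equiv 1\pmod a$ gives $o^+(aL_j+\nu_j)\,\pi_{ij}(p^2)=o^+(aL_i+\nu_i)\,\pi_{ji}(p^2)$, so that conjugation by $\diag\!\big(o^+(aL_i+\nu_i)^{-1/2}\big)$ makes $B_p$ symmetric. Hence $p+1$ is the Perron eigenvalue of $B_p$ on each connected component of the $p$-neighbour graph (edges being the pairs with $\pi_{ij}(p^2)>0$), the $(p+1)$-eigenspace is one-dimensional on each component, and it is spanned by the constant vector. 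Therefore every $(p+1)$-eigenvector, in particular $v$, is constant on each connected component.

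Finally I would identify the components with (unions of) proper spinor genera. Lemma \ref{lem-conseq-of-strongapproximation}, resting on strong approximation, shows that every proper class of $\pspn(aL+\nu)$ meets $Z_p(aL+\nu)$, while the elements of $Z_p(aL+\nu)$ are joined to $aL+\nu$ by chains of $p$-neighbours moving through the connected Bruhat--Tits tree at the isotropic prime $p$; hence the $p$-neighbour graph is connected within each proper spinor genus. Consequently $v$ is constant on $\pspn(aL+\nu)$, which gives $\pi_{t,\chi}(\T{aL_i+\nu_i})=\pi_{t,\chi}(\T{aL_j+\nu_j})$, and summing over all $t$ and $\chi$ yields $\T{aL+\nu}-\T{aM+\xi}\in U^\perp$. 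I expect the main obstacle to lie precisely in this last step: reconciling the single $p$-neighbours recorded by $B_p$ with the $p$-power neighbourhoods used to define $Z_p(aL+\nu)$, and checking that under $p\equiv 1\pmod a$ the shift vectors match (via Lemma \ref{lem-p-nbd-properties}(1) and Lemma \ref{lem-genus-structure}) so that connectivity within a proper spinor genus genuinely forces the eigenvector $v$ to be constant there.
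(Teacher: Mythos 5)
Your proposal is correct and follows essentially the same route as the paper: the Eichler-type commutation relation of Theorem \ref{thm-Hecke-nbd} combined with Proposition \ref{prop-U_t,chi-eigenspace} to make the vector of $U_t$-projections a $(p+1)$-eigenvector of the neighbor-incidence matrix, connectivity of the $p$-neighbor graph via chains inside $Z_p(aL+\nu)$, and Lemma \ref{lem-conseq-of-strongapproximation} to reach every proper class of the spinor genus. The only cosmetic differences are that the paper fixes a single prime $p\equiv 1\pmod{8N_La}$ (which forces $\legendre{-td_L}{p}=1$ for every relevant $t$ at once, rather than choosing $p$ per $t$) and replaces your Perron--Frobenius step by the equivalent elementary observation that a coefficientwise minimum over the finitely many classes propagates to equality along the zero-sum relation \eqref{eqn-thm-Theta-same-spn-proof:1}.
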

\begin{proof}
	The second assertion follows directly from \eqref{defn-thetaofproperspinorgenus} once we prove the first assertion.
	The proof for the first assertion will follow an argument similar to \cite[Satz 4]{Schulze-PillotTernaryTheta}. 
	Let $p$ be a prime number such that $p\equiv 1 \pmod{8N_La}$.
	For a square-free positive integer $t$, let $\pi_t$ denote the projection onto $U_t=\oplus_{\chi\pmod{a}} U_t(4N_La^2,\chi\chi_{4d_L})$. 
If $t\nmid N_La^2$, then Corollary \ref{cor-determining-coefficientsofunarytheta} implies that $\pi_t\left(\T{aL+\nu}-\T{aM+\xi}\right)=0$. Now suppose that $t\mid N_La^2$. Note that $\bar{p}\equiv 1\pmod{a}$ so that $aL+\bar{p}\nu = aL+\nu$ and $\pgen(aL+\bar{p}\nu)=\pgen(aL+\nu)$.
	Moreover, since the projection operators $\pi_t$ commute with the Hecke operator $T(p^2)$ and $U_t$ is an eigenspace under $T(p^2)$ by Proposition \ref{prop-U_t,chi-eigenspace}, we conclude from Theorem \ref{thm-Hecke-nbd} that
	\begin{equation}
		\begin{aligned}
			\sum\limits_{aK+\mu\in R_p(aL+\nu)} \pi_t(\T{aK+\mu})&=\pi_t(\T{aL+\nu}|T(p^2))\\
						&=\pi_t(\T{aL+\nu})|T(p^2)=(p+1)\pi_t(\T{aL+\nu}).
		\end{aligned}
	\end{equation}
	Here we used the fact that $\chi(p)=1$ since $p\equiv 1\pmod{a}$ and $\legendre{-td_L}{p}=1$ by quadratic reciprocity and $p\equiv 1\pmod{8N_La}$ in the last equality. Since $|R_p(aL+\nu)|=p+1$, we have
	\begin{equation}\label{eqn-thm-Theta-same-spn-proof:1}
		\sum\limits_{aK+\mu \in R_p(aL+\nu)} \pi_t(\T{aK+\mu}-\T{aL+\nu})=0.
	\end{equation}
	We claim that $\pi_t(\T{aJ+\lambda}-\T{aL+\nu})=0$ for any cosets $aJ+\lambda \in Z_p(aL+\nu)$.
	For any $aJ+\lambda \in Z_p(aL+\nu)$, let 
	$$
	\pi_t(\T{aJ+\lambda}(z))=\sum\limits_{m=1}^\infty r''_{aJ+\lambda}(m) q^{tm^2}.
	$$
The claim is then equivalent to showing that 
\begin{equation}\label{eqn:UtcoeffequalZp}
r''_{aJ+\lambda}(m)=r''_{aL+\nu}(m)
\end{equation}
for all $aJ+\lambda\in Z_p(aL+\nu)$ and all $m$. We first show \eqref{eqn:UtcoeffequalZp} for $aJ+\lambda\in R_p(aL+\nu)\subseteq Z_p(aL+\nu)$.  Since any $aJ+\lambda\in R_p(aL+\nu)$ is contained in $\pgen(aL+\nu)$ and the theta function only depends on the choice of class in the genus (of which there are only finitely many), we may assume that $aL+\nu$ is chosen so that 
	$$
	\text{Re}(r''_{aL+\nu}(m)) = \min \{\text{Re}(r''_{aJ+\lambda}(m)) : aJ+\lambda \in R_p(aL+\nu)\}.
	$$
We see that the real part of the $m$-th coefficient of each term in \eqref{eqn-thm-Theta-same-spn-proof:1} is non-negative and the coefficients sum to zero, so $\text{Re}(r''_{aJ+\lambda}(m))=\text{Re}(r''_{aL+\nu}(m))$ for any $aJ+\lambda \in R_p(aL+\nu)$. Making the same argument with the imaginary part, we conclude that $r''_{aJ+\lambda}(m))=r''_{aL+\nu}(m)$ for any $aJ+\lambda \in R_p(aL+\nu)$, giving \eqref{eqn:UtcoeffequalZp} for any $aJ+\lambda\in R_{p}(aL+\nu)$.

To show \eqref{eqn:UtcoeffequalZp} for all $aJ+\lambda\in Z_p(aL+\nu)$, we claim that for any $aJ+\lambda\in Z_p(aL+\nu)$, there is a chain of cosets 
	\begin{equation}\label{eqn:chain}
		aL+\nu=aK_0+\mu_0,\ aK_1+\mu_1, \ldots, aK_n+\mu_n=aJ+\lambda
	\end{equation}
	such that $aK_{i}+\mu_{i}\in R_p(aK_{i-1}+\mu_{i-1})$ for any $1\le i \le n$, which immediately implies \eqref{eqn:UtcoeffequalZp} from the claim for $R_p(aL+\nu)$ used inductively. To see that such a chain exists, we note from the local theory of lattices (cf. \cite[82:23]{OMBook}) that there exists a basis $\{e_1,e_2,e_3\}$ of $L_p$ and $n\in\Z_{\geq 0}$ such that $Q(e_2)=Q(e_3)=0$, $B(e_1,e_2)=B(e_1,e_3)=0$, $B(e_2,e_3)=1$, and $\{e_1,p^{-n}e_2,p^{n}e_3\}$ is a basis of $J_p$. For $0\le i \le n$, taking $aK_i+\mu_i$ to be the cosets on the space $\Q L$ satisfying
	\[
	a(K_i)_q+\mu_i = aL_q+\nu \text{ for all }q\neq p  \quad \text{and} \quad (K_i)_p=\Z_p e_1+ \Z_p p^{-i}e_2 + \Z_p p^i e_3,
	\]
	one may check that they satisfy desired properties from the definitions of $Z_p(aL+\nu)$ and $aK_i+\mu_i\in R_p(aK_{i-1}+\mu_{i-1})$. Therefore, we conclude by induction that \eqref{eqn:UtcoeffequalZp} holds for any $aJ+\lambda\in Z_p(aL+\nu)$.
	
	Now for any $aM+\xi\in \pspn(aL+\nu)$, by Lemma \ref{lem-conseq-of-strongapproximation}, there is a $aK+\mu\in Z_p(aL+\nu)$ such that $aK+\mu \in \pcls(aM+\xi)$.
	Since $\T{aM+\xi}(z) = \T{aK+\mu}(z)$, we have
	$$
	\pi_t(\T{aM+\xi}(z)-\T{aL+\nu}(z))=\pi_t(\T{aK+\mu}(z)-\T{aL+\nu}(z))=0
	$$
	for all $t\mid 4N_La^2$. Therefore, we may conclude that $\T{aM+\xi}(z)-\T{aL+\nu}(z)\in U^\perp$.
\end{proof}

\subsection{An algorithm for computing proper class representatives of proper (spinor) genera} \label{subsection-graph}
In this subsection, we are interested in constructing an algorithm that returns a complete set of representatives of proper classes of $\pspn(aL+\nu)$ (hence, that of $\pgen(aL+\nu)$). In principle, one can iteratively find representatives in $Z_p(aL+\nu)$ and then compute the mass $\text{Mass}(\pspn(aL+\nu))$ to determine when all proper classes have been found. However, an independent calculation of $\text{Mass}(\pspn(aL+\nu))$ (without knowing the set of representatives) is needed for such a construction, so we instead design an algorithm to find the complete set of representatives without computing the mass.
 
Throughout this subsection, let $p$ be a prime number with $p\nmid 4N_La^2$ and $p\equiv 1\pmod{a}$ so that $R_p(aL+\nu)\subseteq Z_p(aL+\nu)$.
Consider the (undirected) graph $X(aL+\nu:p)$ whose vertices consist of the lattice cosets in the set $Z_p(aL+\nu)$.
Two lattice cosets $aK_1+\mu_1$ and $aK_2+\mu_2$ are connected by an edge if and only if one is a $p$-neighborhood of the other (hence vice versa because $p\equiv 1\pmod{a}$). 
Then, as in the lattice case, the graph $X(aL+\nu:p)$ is connected due to the existence of chains, as proven in \eqref{eqn:chain}. Furthermore, it is known (see the discussion at the end of \cite[Section 1]{Schulze-PillotAlgorithm}) that for ternary lattices (i.e., the $a=1$ case), it is a tree. Note that by the definition of $p$-neighborhoods in Section \ref{subsection-p-nbd-cosets}, if $aK_1+\mu_1$ and $aK_2+\mu_2$ are connected in $X(aL+\nu:p)$, then $K_1$ and $K_2$ are connected in $X(L;p)$, so $X(aL+\nu;p)$ is also a tree for $a>1$ in the ternary case.

Moreover, one may show by following a similar argument in \cite{BenhamHsia} that the number $g^+(aL+\nu:p)$ of proper spinor genera represented by $X(aL+\nu:p)$ is at most two, and 
\[
g^+(aL+\nu:p)=1  \quad \text{if and only if} \quad  j(p)\in \Q^\times \prod\limits_{p\in \Omega} \theta(O^+(aL_p+\nu)),
\]
where $j(p)=(j_q)_{q\in\Omega}\in I_\Q$ is the id{\`e}le defined by $j_p=p$ and $j_q=1$ for any $q\in \Omega\setminus\{p\}$.

Assume that we have found $p$ such that $g^+(aL+\nu:p)=1$. We now start finding the vertices of the graph $X(aL+\nu:p)$ to construct a complete set of representatives of proper classes of $\pspn(aL+\nu)$, going through the following algorithm:
\begin{enumerate}
	\item[Step $0$:] Start by taking the set of a vertex $S=S_{\text{new}}^{(0)}:=\{aL+\nu\}$, and put $S_{\text{new}}=S_{\text{new}}^{(0)}$. 
	\item[($\ast$)] Let $i=1$, and repeat the following Step $i$ until $S_{\text{new}}^{(i-1)}=\emptyset$.
	\item[Step $i$:] 
	\item Find $S^{(i)}:=\cup_{aK+\mu\in S_{\text{new}}^{(i-1)}} R_p(aK+\mu)$ by constructing $p$-neighborhoods. 
	\item Find $S_{\text{new}}^{(i)}:=\{aK+\mu \in S^{(i)} : aK+\mu \notin \pcls(aM+\xi) \text{ for all } aM+\xi \in S\}$.
	\item Update $S$ with $S \cup S_{\text{new}}^{(i)}$, and $i$ with $i+1$.
\end{enumerate}
It is clear that this algorithm terminates since the set $S$ consists of inequivalent lattice cosets in $\pspn(aL+\nu)$ by its construction. We claim that $S$ form a complete set of representatives. 

Note that the algorithm returns a finite subtree of $X'(aL+\nu:p)\subseteq X(aL+\nu:p)$ given by iteratively adding nodes of $X(aL+\nu:p)$ of depth $i$ (with root $aL+\nu$) which are connected to nodes of $X'(aL+\nu:p)$ of depth $i-1$ and are not in the same proper class as any node in $X'(aL+\nu:p)$ with depth $<i$. Although $X(aL+\nu:p)$ is connected and contains a representative of every proper class in $\pspn(aL+\nu)$ by Lemma \ref{lem-conseq-of-strongapproximation}, it is not immediately clear that every proper class appears in $X'(aL+\nu:p)$ because our trimming of the tree may have made the classes disconnected. However, we claim that a representative of every such class appears in $X'(aL+\nu:p)$, which is equivalent to showing that $S$ contains a full set of reprentatives.

Let $aL_i+\mu_i\in Z_p(aL+\nu)$ with $i=1,2$ be two lattice cosets which are isometric to each other, say $aL_2+\nu_2 = \sigma(aL_1+\nu_1)$ for some $\sigma\in O^+(V)$. If $\{aK_j+\mu_j\}_{1\le j \le p+1}$ are $p$-neighborhoods of $aL_1+\nu_1$, then one may easily observe from the definition that $\{\sigma(aK_j+\mu_j)\}_{1\le j \le p+1}$ are $p$-neighborhoods of $aL_2+\nu_2=\sigma(aL_1+\nu_1)$. If we take a node $aK+\mu\in X(aL+\nu:p)$ of minimal depth (say $i$) such that no lattice coset in the same proper class as $aK+\mu$ is contained in $X'(aL+\nu:p)$, then let $aK'+\mu'$ be the parent of $aK+\mu$ (with depth $i-1$). Since $aK'+\mu'$ has smaller depth, by minimality of $i$ there must be $aK''+\mu''\in X'(aL+\nu:p)$ in the same proper class as $aK'+\mu'$, but then if $\sigma(aK'+\mu')=aK''+\mu''$, then $\sigma(aK+\mu)$ is a neighbor of $aK''+\mu''$, and when the algorithm finds $aK''+\mu''$, either $\sigma(aK+\mu)$ is the parent of $aK''+\mu''$ (in which case $\sigma(aK+\mu)\in X'(aL+\nu:p)$), or the algorithm would check $\sigma(aK+\mu)$ at the following step. This contradicts the assumption that no representative of the class of $aK+\mu$ is contained in $X'(aL+\nu:p)$.

 To extend this algorithm to obtain a complete set of representatives for $\pgen(aL+\nu)$, note that if $\pspn(aL+\nu)\subsetneq \pgen(aL+\nu)$, then one can obtain any other proper spinor genera in $\pgen(aL+\nu)$ with $p$-neighborhoods of $aL+\nu$ by choosing a prime $p$ carefully; this is possible since the ``spinor linkage theorem" can analogously be extended to lattice cosets (see \cite[Theorem 2 and Remark]{BenhamHsia}).

\end{document}